\DeclareMathAlphabet{\mathcal}{OMS}{cmsy}{m}{n}
\DeclareSymbolFont{largesymbols}{OMX}{cmex}{m}{n}
\newtheorem{theorem}{Theorem}[section]
\newtheorem{proposition}[theorem]{Proposition}
\newtheorem{lemma}[theorem]{Lemma}
\newtheorem{corollary}[theorem]{Corollary}
\newtheorem{remark}[theorem]{Remark}
\newcommand{\e}{{\rm e}}
\newcommand{\ii}{{\rm i}}
\begin{document}
\title{Physical Space Proof of Bilinear Estimates and Applications to Nonlinear Dispersive Equations (II)}
\author{Xinfeng Hu}
\address{School of Mathematical Sciences, Fudan University, Shanghai 200433, China}
\email{xfhu25@m.fudan.edu.cn}

\author{Li Tu}
\address{School of Mathematical Sciences, Fudan University, Shanghai 200433, China}
\email{ltu23@m.fudan.edu.cn}

\author{Yi Zhou}
\address{School of Mathematical Sciences, Fudan University, Shanghai 200433, China}
\email{yizhou@fudan.edu.cn}

\subjclass[2010]{35Q55}
\keywords{Zakharov system; Local well-posedness; Bilinear estimate method; Div-curl lemma}

\date{\today}

\begin{abstract}
    The work by Kenig-Ponce-Vega \cite{KPV1} initiated the use of Bourgain spaces to study the low-regularity well-posedness of semilinear dispersive equations.
    Since then, the Bourgain space method has become the dominant, and almost the only method to deal with this problem.
    The goal of this series of papers is to propose an alternative approach for this problem that does not rely on Bourgain spaces.
    Our method is based on a bilinear estimate, which is proved in a physical space approach by a new div-curl type lemma introduced by the third author.
    Combining these ingredients with a Strichartz estimate of mixed spatial integrability, we will illustrate our method in the present paper by reproducing best known local well-posedness results for the $2d$ and $3d$ Zakharov system from Bejenaru-Herr-Holmer-Tataru \cite{2009On} and Bejenaru-Herr \cite{2010Convolutions}.
\end{abstract}

\maketitle
\setcounter{tocdepth}{1}
\tableofcontents

\section{Introduction}
Since the pioneering work of Kenig-Ponce-Vega \cite{KPV1}, the Bourgain space method has become the main method for establishing the low-regularity well-posedness of semilinear dispersive equations.
While other methods have since been developed, the Bourgain space method remains so profoundly influential that it is almost regarded as the only viable approach for this problem.
In this series of papers we aim to demonstrate that our recently developed bilinear estimate method, based on a new type of div-curl lemma due to the third author, can be an alternative. 
In fact, we believe that many classical results admit new proofs within our framework.

In the first part \cite{tu2025physicalspaceproofbilinear}, we consider the modified Korteweg-de Vries and modified Benjamin-Ono equations. 
In the following, we study the initial value problem for the Zakharov system: 
\begin{equation}\label{Zakharov equ}
    \begin{cases}
        \mathrm{i}\partial_{t}E+\Delta E=nE,\\
        \partial_{t}^2n-\Delta n=\Delta \vert E\vert^{2},\\
        t=0:(E,n,\partial_{t}n)=(E_{0},n_{0},n_{1}),
    \end{cases}
\end{equation}
where $E\colon [0, T] \times \mathbb{R}^d \rightarrow \mathbb{C}, n\colon [0, T] \times \mathbb{R}^d \rightarrow \mathbb{R}$, and the initial data $(E_0, n_0, n_1)$ belongs to Sobolev space $H^{s}(\mathbb{R}^{d})\times H^{l}(\mathbb{R}^{d})\times H^{l-1}(\mathbb{R}^{d})$.

The Zakharov system was introduced as a model for the propagation of Langmuir waves in a plasma \cite{1972Collapse}.
It satisfies conservation of mass
\begin{equation*}
    \mathcal{M}(E(t)) := \Vert E(t)\Vert_{L_x^2}^2 = \Vert E_0\Vert_{L_x^2}^2 = \mathcal{M}(E(0))
\end{equation*}
and energy
\begin{align*}
    \mathcal{E}(E(t), n(t), \partial_tn(t)) &:= \Vert \nabla_xE(t)\Vert_{L_x^2}^2 + \frac{1}{2}\left(\Vert n(t)\Vert_{L_x^2}^2 + \Vert \nabla_x^{-1}\partial_tn(t)\Vert_{L_x^2}^2\right) + \int_{\mathbb{R}^d}n|E|^2 \,{\rm d}x \\
    &= \mathcal{E}(E_0, n_0, n_1).
\end{align*}

The low-regularity problem for \eqref{Zakharov equ} can be traced back to the work of Bourgain-Colliander\cite{1996On}, in which global well-posedness in the energy space was established under a smallness condition.
In one spatial dimension ($d=1$), Colliander-Holmer-Tzirakis\cite{2006Low} proved that \eqref{Zakharov equ} is globally well-posed in $ L^{2}(\mathbb{R})\times H^{-1/2}(\mathbb{R})\times H^{-3/2}(\mathbb{R})$.
The local well-posedness for $2d$ Zakharov system in $H^{1/2}(\mathbb{R}^2)\times L^{2}(\mathbb{R}^2)\times H^{-1}(\mathbb{R}^2)$ was obtained by Ginibre-Tsutsumi-Velo\cite{GINIBRE1997384}, and this result was later improved to the optimal regularity $L^2(\mathbb{R}^2) \times H^{-1/2}(\mathbb{R}^2) \times H^{-3/2}(\mathbb{R}^2)$ by Bejenaru-Herr-Holmer-Tataru \cite{2009On}.
For the three-dimensional case, Bejenaru-Herr\cite{2010Convolutions} proved \eqref{Zakharov equ} is locally well-posed in $H^s(\mathbb{R}^3)\times H^{l}(\mathbb{R}^3)\times H^{l - 1}(\mathbb{R}^3)$ for $l > -1/2, l \leq s \leq l + 1, 2s > l + 1/2$.
Very recently, the sharp range of $(s, l)$ for well-posedness of \eqref{Zakharov equ} is determined for spatial dimensions $d \geq 4$ by Candy-Herr-Nakanishi \cite{TSK}.
We also refer the reader to other related works \cite{FPZ,GLNW,GN,TSK2}.

In this paper, we prove the following local well-posedness results for the $2d$ and $3d$ Zakharov system without using Bourgain spaces. 
Our results precisely match the sharpest known regularity thresholds for the initial data (see \cite{2009On,2010Convolutions}). 

\begin{theorem}\label{thm}
    Consider the Zakharov system \eqref{Zakharov equ} with initial data $(E_{0},n_{0},n_{1})\in H^{s}(\mathbb{R}^{d})\times H^{l}(\mathbb{R}^{d})\times H^{l-1}(\mathbb{R}^{d})$
    (the spaces $S_{1}(T)$ and $N_{1}(T)$ will be defined in the following section).
    \begin{enumerate}
        \item When $d=2, s=0, l=-1/2$, there exists $T=T\left(\Vert E_{0}\Vert_{L^2_x},\Vert n_{0}\Vert_{H^{-1/2}_x},\Vert n_{1}\Vert_{H^{-3/2}_x}\right) > 0$, such that \eqref{Zakharov equ} has a unique solution $(E, n, \partial_tn)$ satisfying
    \begin{gather}
        \label{a}E\in C([0,T];L^{2}(\mathbb{R}^{2})) \cap S_1(T), \\
        \label{b}n\in C([0,T];H^{-\frac{1}{2}}(\mathbb{R}^{2})), \quad \partial_tn \in C([0,T];H^{-\frac{3}{2}}(\mathbb{R}^{2})), \\ 
        \label{a1} nE \in N_1(T).       
    \end{gather}
    Moreover, for any neighborhood $U$ of initial data in $L^2(\mathbb{R}^2) \times H^{-1/2}(\mathbb{R}^2) \times H^{-3/2}(\mathbb{R}^2)$ the data-to-solution map $(E_0, n_0, n_1) \mapsto (E, n, \partial_tn)$ from $U$ into the class defined by \eqref{a}-\eqref{a1} is Lipschitz continuous.
    \item When $d=3, s > 0, l = s-1/2$, there exists $T=T\left(\Vert E_{0}\Vert_{H^{s}_x},\Vert n_{0}\Vert_{H^{l}_x},\Vert n_{1}\Vert_{H^{l-1}_x}\right) > 0$, such that \eqref{Zakharov equ} has a unique solution $(E,n)$ satisfying
    \begin{gather}
        \label{c}E\in C([0,T];H^{s}(\mathbb{R}^{3}))\cap S_1(T), \\
        \label{d}n\in C([0,T];H^{l}(\mathbb{R}^{3})), \quad \partial_tn \in C([0,T];H^{l - 1}(\mathbb{R}^{3})), \\
        \label{c1} nE \in N_1(T).      
    \end{gather}
    Moreover, for any neighborhood $U$ of initial data in $H^s(\mathbb{R}^3) \times H^{l}(\mathbb{R}^3) \times H^{l - 1}(\mathbb{R}^3)$ the data-to-solution map $(E_0, n_0, n_1) \mapsto (E, n, \partial_tn)$ from $U$ into the class defined by \eqref{c}-\eqref{c1} is Lipschitz continuous.
    \end{enumerate}
\end{theorem}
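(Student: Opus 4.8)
The plan is to solve \eqref{Zakharov equ} by a contraction mapping argument, reducing the whole theorem to a pair of bilinear estimates together with the linear estimates for the Schr\"odinger and (half-)wave flows; the bilinear estimates are the analytic heart, where the new div-curl lemma replaces the $X^{s,b}$ calculus of \cite{2009On,2010Convolutions} and the Strichartz estimate with mixed spatial integrability enters, while everything downstream is soft. As a first step I would diagonalize the wave equation in \eqref{Zakharov equ} by setting $n_{\pm}:=n\pm i|\nabla|^{-1}\partial_{t}n$, so that $n=\tfrac12(n_{+}+n_{-})$ and $n_{\pm}$ solve the half-wave equations $(i\partial_{t}\mp|\nabla|)n_{\pm}=\pm|\nabla||E|^{2}$ with data $n_{\pm}(0)=n_{0}\pm i|\nabla|^{-1}n_{1}\in H^{l}$ (with the customary care at low frequencies). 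Passing to Duhamel form,
\begin{align}
E(t) &= e^{it\Delta}E_{0}-i\int_{0}^{t}e^{i(t-t')\Delta}\bigl(n(t')E(t')\bigr)\,dt', \label{duhE}\\
n_{\pm}(t) &= e^{\mp it|\nabla|}n_{\pm}(0)\mp i\int_{0}^{t}e^{\mp i(t-t')|\nabla|}\bigl(|\nabla||E(t')|^{2}\bigr)\,dt', \label{duhn}
\end{align}
I would read \eqref{duhE}--\eqref{duhn} as a fixed point $(E,n_{+},n_{-})=\Phi(E,n_{+},n_{-})$ on a closed ball in
\[
X_{T}:=\bigl(C([0,T];H^{s})\cap S_{1}(T)\bigr)\times\textstyle\prod_{\pm}\bigl(C([0,T];H^{l})\cap C^{1}([0,T];H^{l-1})\bigr),
\]
with $(s,l)=(0,-\tfrac12)$ when $d=2$ and $(s,l)=(s,s-\tfrac12)$, $s>0$, when $d=3$, the radius being a fixed multiple of $\|E_{0}\|_{H^{s}}+\|n_{0}\|_{H^{l}}+\|n_{1}\|_{H^{l-1}}$.

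Three families of estimates make $\Phi$ a contraction. First, the linear bounds: $\|e^{it\Delta}E_{0}\|_{S_{1}(T)}\lesssim\|E_{0}\|_{H^{s}}$ and the inhomogeneous estimate $\bigl\|\int_{0}^{t}e^{i(t-t')\Delta}F\,dt'\bigr\|_{S_{1}(T)\cap C([0,T];H^{s})}\lesssim\|F\|_{N_{1}(T)}$, plus the $C_{t}H^{l}$ energy inequality for the half-wave Duhamel term in \eqref{duhn} in terms of a suitable norm of $|\nabla||E|^{2}$; here the Strichartz estimate with mixed spatial integrability is one of the constituent pieces of $S_{1}(T)$ and is the ingredient that makes the $d=3$ endpoint $s=0+$ go through (in $d=2$ the classical bilinear $L^{2}$ Strichartz refinement suffices). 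Second, the Schr\"odinger bilinear estimate $\|nE\|_{N_{1}(T)}\lesssim\bigl(\sum_{\pm}\|n_{\pm}\|_{C([0,T];H^{l})\cap C^{1}([0,T];H^{l-1})}\bigr)\|E\|_{S_{1}(T)}$, proved in physical space via the div-curl lemma. Third, the wave bilinear estimate controlling $|\nabla|(E_{1}\overline{E_{2}})$ (in the norm dual to the one appearing in the half-wave energy inequality) by $\|E_{1}\|_{S_{1}(T)}\|E_{2}\|_{S_{1}(T)}$; the mechanism is that $|E|^{2}$ carries a spacetime divergence structure, $\partial_{t}|E|^{2}+2\,\mathrm{Im}\,\nabla\cdot(\overline{E}\nabla E)=0$, which is exactly what a div-curl lemma pairs against the oscillation of the wave propagator. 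Granting these, together with their difference versions (immediate from bi/sesquilinearity), $\Phi$ maps the ball to itself and contracts once $T$ is small, using the subcritical gain in $T$ built into the definitions of $S_{1}(T)$ and $N_{1}(T)$.

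The Banach fixed point theorem then produces the unique fixed point $(E,n_{+},n_{-})$ in the ball, hence a solution of \eqref{duhE}--\eqref{duhn}; setting $n=\tfrac12(n_{+}+n_{-})$ recovers a solution of \eqref{Zakharov equ}. The embedding $S_{1}(T)\hookrightarrow C([0,T];H^{s})$ and the membership $nE\in N_{1}(T)$ give \eqref{a}--\eqref{a1} (resp. \eqref{c}--\eqref{c1}), while differentiating \eqref{duhn} and inserting the equation yields the $C^{1}_{t}H^{l-1}$ regularity of $n$. Uniqueness in the stated class follows because that class is precisely where the contraction operates, and Lipschitz dependence on $(E_{0},n_{0},n_{1})\in U$ follows from the contraction constant being uniform over $U$ together with the linear estimates applied to differences of solutions.

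The main obstacle is the Schr\"odinger bilinear estimate $\|nE\|_{N_{1}(T)}\lesssim\|n\|_{\mathrm{wave}}\|E\|_{S_{1}(T)}$. The dangerous regime is the high-$\times$-high (and near-resonant) frequency interaction, where the Fourier/Bourgain proof leans on the modulation weight $b$ to absorb the resonance; here one must instead reorganize the product in physical space so that the div-curl lemma applies, and, in $d=3$ at regularity $s=0+$, do so using a norm weak enough to be controlled by the mixed-norm Strichartz estimate yet strong enough to close \eqref{duhE}. Designing $S_{1}(T)$ and $N_{1}(T)$ so that all three estimates above hold simultaneously — with a genuine power of $T$ to spare for the contraction — is the technical core of the argument.
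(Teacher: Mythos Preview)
Your outline captures the right ingredients (first-order reduction, mixed-norm Strichartz, div-curl bilinear input), but there is a structural gap in how you propose to close the iteration. You phrase the key nonlinear estimate as a clean multiplicative bound
\[
\|nE\|_{N_{1}(T)}\ \lesssim\ \|n\|_{\text{wave}}\,\|E\|_{S_{1}(T)},
\]
valid for arbitrary $(E,n)$ in the ball, and then invoke the contraction mapping principle. The div-curl lemma does \emph{not} deliver such an estimate: it needs the conservation-law structure of the \emph{specific} $E$ and $n$ (or $v$) under consideration, i.e.\ it applies to pairs that satisfy the Schr\"odinger and wave equations (possibly with known forcing), not to generic elements of $S_{1}(T)\times S_{2}(T)$. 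This is exactly why the paper does \emph{not} run a black-box contraction but instead tracks the Picard iterates $(E^{(k)},v^{(k)})$ explicitly and carries the quantity $\|v^{(k)}E^{(k)}\|_{N_{1}(T)}$ (and also $\|v^{(k+1)}E^{(k)}\|_{N_{1}(T)}$) as an \emph{additional iteration variable}: at each step div-curl is applied to the actual equations satisfied by $E^{(k+1)}$ and $v^{(k+1)}$, so the bilinear bound picks up the forcing terms $(\Re v^{(k)})E^{(k)}$ and $\Delta|E^{(k)}|^{2}$ on the right, which are then absorbed inductively. Your scheme, as written, would need the estimate for $(E,n)$ that are not solutions of anything, and the div-curl mechanism has no leverage there.

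Two smaller corrections. First, $S_{1}(T)$ does \emph{not} embed in $C([0,T];H^{s})$ in the paper's setup; the energy norm is kept separately (the paper works in $X(T)=L^{\infty}_{t}H^{s}_{x}\cap S_{1}(T)$). Second, the mixed spatial integrability Strichartz estimate is the engine in \emph{both} dimensions, not only $d=3$: for $d=2$ the paper uses $L^{4}_{t}L^{2}_{\omega}L^{\infty}_{\omega_{\perp}}$ and for $d=3$ it uses $L^{2}_{t}L^{2}_{\omega}L^{\infty}_{\omega_{\perp}}$ (with a logarithmic loss absorbed by $s>0$). Relatedly, your identification of high--high as the dangerous regime is off for this method: after the angular decomposition $P_{\lambda}=\sum_{i}P_{\lambda,\omega_{i}}$, all three interaction types $I_{1},I_{2},I_{3}$ are handled by the \emph{same} div-curl bilinear estimate paired with the mixed-norm Strichartz bound, and the high--high sum is harmless because the bilinear gain $\mu^{-1/2}$ already supplies the $H^{-1/2}$ weight for $v$.
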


The basic idea to prove Theorem \ref{thm} is straightforward, namely, to establish contraction of the corresponding Picard iteration sequence.
Instead of using Bourgain spaces to control the nonlinearity, we employ the bilinear estimate method developed in \cite{tu2025physicalspaceproofbilinear} to control the nonlinear terms in the iteration scheme.

To be more specific, we recall the classical Strichartz estimate for Schr\"odinger equation (see \cite{2006Nonlinear}; here we take $d = 2$ as an example) and energy estimate for wave equation:

\begin{theorem}
    Suppose $E\colon [0, T] \times \mathbb{R}^2 \rightarrow \mathbb{C}$ is solution to $\ii \partial_tE + \Delta E = F$ with initial data $E(0) = E_0$. 
    Then 
    \begin{equation}\label{AB}
        \Vert E\Vert_{L_{t, x}^4} + \Vert E\Vert_{L_t^{\infty}L_x^2} \lesssim \Vert E_0\Vert_{L_x^2} + \Vert F\Vert_{L_{t, x}^{\frac{4}{3}}}.
    \end{equation}
\end{theorem}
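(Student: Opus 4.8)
I would split the two pieces of \eqref{AB} — the energy bound $\|E\|_{L^\infty_t L^2_x}$ and the dispersive bound $\|E\|_{L^4_{t,x}}$ — and treat both as instances of the inhomogeneous Strichartz estimate for the pairs $(\infty,2)$ and $(4,4)$, which are the admissible pairs in $d=2$ (they satisfy the scaling relation $\tfrac2q+\tfrac2r=1$), the second lying strictly inside the admissible range, away from the forbidden endpoint $(2,\infty)$. Writing the solution through Duhamel's formula $E(t)=e^{\ii t\Delta}E_0-\ii\int_0^t e^{\ii(t-s)\Delta}F(s)\,ds$, it suffices to prove: (i) the homogeneous estimate $\|e^{\ii t\Delta}E_0\|_{L^4_{t,x}}+\|e^{\ii t\Delta}E_0\|_{L^\infty_tL^2_x}\lesssim\|E_0\|_{L^2_x}$; and (ii) the analogous bound for the Duhamel term in terms of $\|F\|_{L^{4/3}_{t,x}}$.

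For (i), the $L^\infty_tL^2_x$ bound is immediate since $e^{\ii t\Delta}$ is a unitary group on $L^2_x$ (Plancherel). For the $L^4_{t,x}$ bound I would run the standard $TT^*$ argument. The input is the dispersive estimate $\|e^{\ii t\Delta}f\|_{L^\infty_x}\lesssim|t|^{-1}\|f\|_{L^1_x}$, which comes directly from the explicit kernel $e^{\ii t\Delta}f(x)=(4\pi\ii t)^{-1}\int e^{\ii|x-y|^2/4t}f(y)\,dy$ (no stationary phase needed, the oscillatory factor being explicit). Interpolating this with the $L^2_x$ isometry gives $\|e^{\ii t\Delta}f\|_{L^4_x}\lesssim|t|^{-1/2}\|f\|_{L^{4/3}_x}$. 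Setting $Tf=e^{\ii t\Delta}f$, duality reduces $\|Tf\|_{L^4_{t,x}}\lesssim\|f\|_{L^2_x}$ to $\|TT^*G\|_{L^4_{t,x}}\lesssim\|G\|_{L^{4/3}_{t,x}}$ with $TT^*G(t)=\int e^{\ii(t-s)\Delta}G(s)\,ds$; applying the pointwise-in-time decay followed by the Hardy--Littlewood--Sobolev inequality in $t$ (exponents $\tfrac34=\tfrac14+\tfrac12$, matching the decay rate $|t-s|^{-1/2}$) closes this.

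For (ii), the $L^\infty_tL^2_x$ bound of the Duhamel term is direct: since $e^{\ii t\Delta}$ is unitary, $\big\|\int_0^t e^{\ii(t-s)\Delta}F(s)\,ds\big\|_{L^2_x}=\big\|T^*(\mathbf{1}_{[0,t]}F)\big\|_{L^2_x}$, and the dual of the homogeneous estimate in (i) bounds this by $\|\mathbf{1}_{[0,t]}F\|_{L^{4/3}_{t,x}}\le\|F\|_{L^{4/3}_{t,x}}$, uniformly in $t$. For the $L^4_{t,x}$ bound, the full-line operator $G\mapsto\int_{\mathbb R}e^{\ii(t-s)\Delta}G(s)\,ds$ is already controlled by the $TT^*$ computation of (i); to pass to the retarded operator $F\mapsto\int_0^t e^{\ii(t-s)\Delta}F(s)\,ds$ I would invoke the Christ--Kiselev lemma, which applies precisely because the pair $(4,4)$ is non-endpoint, the input time-exponent $\tfrac43$ being strictly smaller than the output time-exponent $4$. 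Continuity of $t\mapsto E(t)$ in $L^2_x$ then follows by approximating $F$ by smooth, compactly supported functions and invoking the estimate just proved.

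The only genuine obstacle is the passage to the retarded Duhamel integral via Christ--Kiselev; everything else reduces to Plancherel, an explicit kernel bound, real interpolation, and Hardy--Littlewood--Sobolev. Since no endpoint ($L^2_tL^\infty_x$) estimate is required, the more delicate Keel--Tao machinery can be avoided; the three-dimensional version and the mixed spatial integrability variant used later in the paper follow from the same scheme, replacing $(4,4)$ by the relevant admissible pair and adjusting the Hardy--Littlewood--Sobolev exponents accordingly.
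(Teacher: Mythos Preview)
Your argument is correct and is exactly the standard proof of the classical Strichartz estimate. Note, however, that the paper does not actually prove this theorem: it is stated as a known result with a reference to \cite{2006Nonlinear} (``we recall the classical Strichartz estimates\ldots''), so there is no ``paper's own proof'' to compare against. Your outline --- Duhamel, $TT^*$ via the dispersive decay and Hardy--Littlewood--Sobolev, then Christ--Kiselev for the retarded integral --- is precisely the textbook argument one would find in that reference, and it is also the same template the paper itself uses later for its \emph{refined} Strichartz estimate (Theorem~\ref{Strichartz e}).
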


\begin{theorem}
    Suppose $n\colon [0, T] \times \mathbb{R}^2 \rightarrow \mathbb{R}$ is a solution to $\partial_t^2n - \Delta n = G$ with initial data $n(0) = n_0, \partial_tn(0) = n_1$.
    Then 
    \begin{equation}\label{CD}
        \Vert n\Vert_{L_t^{\infty}L_x^2} + \Vert \partial_tn\Vert_{L_t^{\infty}H_x^{-1}} \lesssim (1 + T)\left(\Vert n_0\Vert_{L_x^2} + \Vert n_1\Vert_{H_x^{-1}} + \Vert G\Vert_{L_t^1H_x^{-1}}\right).
    \end{equation}
\end{theorem}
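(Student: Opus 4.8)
\emph{Approach.} The estimate \eqref{CD} contains no dispersive gain — it is the classical energy estimate for the wave equation — and the plan is to derive it by diagonalising the wave operator into a half-wave evolution whose propagator is unitary on $L^2$. First I would set $\omega := \vert\nabla\vert = \sqrt{-\Delta}$ and introduce
\[
  v := n + \ii\,\omega^{-1}\partial_t n ,
\]
so that, using $\omega^{-1}\Delta n = -\omega n$, the second-order problem $\partial_t^2 n - \Delta n = G$ with $(n,\partial_t n)|_{t=0}=(n_0,n_1)$ becomes the first-order problem
\[
  \partial_t v + \ii\,\omega v = \ii\,\omega^{-1}G , \qquad v(0) = n_0 + \ii\,\omega^{-1}n_1 .
\]
Duhamel's formula then gives $v(t) = \e^{-\ii t\omega}v(0) + \ii\int_0^t \e^{-\ii(t-s)\omega}\,\omega^{-1}G(s)\,ds$.

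\emph{Main steps.} Since $\e^{-\ii t\omega}$ is a Fourier multiplier of unit modulus it is an isometry of $L^2_x$, so Minkowski's integral inequality yields, for every $t\in[0,T]$,
\[
  \Vert v(t)\Vert_{L^2_x} \le \Vert v(0)\Vert_{L^2_x} + \int_0^t \Vert \omega^{-1}G(s)\Vert_{L^2_x}\,ds .
\]
It then remains only to unpack the definition of $v$: because $n$ is real-valued one has pointwise $\vert v\vert^2 = n^2 + (\omega^{-1}\partial_t n)^2$, hence $\Vert v(t)\Vert_{L^2_x}^2 = \Vert n(t)\Vert_{L^2_x}^2 + \Vert \partial_t n(t)\Vert_{\dot H^{-1}_x}^2$, and likewise $\Vert v(0)\Vert_{L^2_x}^2 = \Vert n_0\Vert_{L^2_x}^2 + \Vert n_1\Vert_{\dot H^{-1}_x}^2$, while $\Vert \omega^{-1}G(s)\Vert_{L^2_x} = \Vert G(s)\Vert_{\dot H^{-1}_x}$. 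Taking the supremum over $t$ and using $a+b \le \sqrt2\,(a^2+b^2)^{1/2}$ gives \eqref{CD}. An equivalent, propagator-free route is to differentiate the energy $\mathcal E(t) := \tfrac12\Vert n(t)\Vert_{L^2_x}^2 + \tfrac12\Vert \partial_t n(t)\Vert_{\dot H^{-1}_x}^2$: the cross terms cancel, leaving $\tfrac{d}{dt}\mathcal E = \langle G,\partial_t n\rangle_{\dot H^{-1}} \le \Vert G\Vert_{\dot H^{-1}}\sqrt{2\mathcal E}$, so that $\tfrac{d}{dt}\sqrt{\mathcal E}\le \tfrac1{\sqrt2}\Vert G\Vert_{\dot H^{-1}}$ and Gronwall closes the estimate.

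\emph{Expected obstacle.} The argument is entirely soft; the only point that needs care is the low-frequency behaviour of $\omega^{-1}=\vert\nabla\vert^{-1}$ when $d=2$, where it is not locally $L^2$-bounded. The natural spaces on both sides of \eqref{CD} are the \emph{homogeneous} ones $\dot H^{-1}$, and the $H^{-1}$ appearing in the statement should be read in that homogeneous sense — consistently with the frequency-localised spaces $S_1(T)$ and $N_1(T)$ and the Littlewood--Paley decomposition used in the rest of the paper, where only a bounded range of frequencies is ever tested. With that convention $v(0)\in L^2_x$ requires exactly $n_1\in\dot H^{-1}_x$, and no compactness, smallness, or frequency truncation is otherwise needed.
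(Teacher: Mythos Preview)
The paper does not actually prove this statement: it is quoted in the introduction as a classical estimate with a reference to \cite{2006Nonlinear}, and is used only as background motivation. Your argument is the standard one and is correct; in fact your reduction $v = n + \ii\,\omega^{-1}\partial_t n$ is exactly the transformation $v = n + \ii\Lambda^{-1}\partial_t n$ that the paper itself uses a few lines later to rewrite the Zakharov system \eqref{Zakharov equ} as the first-order system \eqref{reduced equations}, so your approach is entirely in the spirit of the paper. Your remark about the homogeneous reading of $H^{-1}$ is also apt and consistent with how $\Lambda^{-1}$ is used throughout.
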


By Bony's paraproduct decomposition, the interactions in (quadratic) nonlinearity can be classified into three cases: high-low, low-high and high-high interactions.
The high-high interaction term can be controlled directly using Strichartz estimates.
For low-high and high-low interactions, we use an additional bilinear estimate which yields improved regularity.
As demonstrated in \cite{tu2025physicalspaceproofbilinear}, in some sense this results in a favorable average redistribution of derivatives in the nonlinearity.

Back to the Zakharov system \eqref{Zakharov equ}, if we apply the bilinear estimate method directly, by \eqref{AB} and \eqref{CD} we have
\begin{equation*}
    nE \in L_{t, x}^{\frac{4}{3}}, \qquad \Delta |E|^2 \in L_t^1H_x^{-1},
\end{equation*}
which also implies a local-in-time solution $(E, n, \partial_tn)$ in $(L_t^{\infty}H^{1/2}_x \cap L_{t, x}^4) \times L_t^{\infty}L^2_x \times L_t^{\infty}H^{-1}_x$.
However, this result still has $1/2$-derivative gap from the optimal regularity in \cite{2009On}. 
To overcome this, we further decompose the Schr\"odinger component of the solution according to different directions in phase space, where a better Strichartz estimate is given in the \emph{main direction} (see Section \ref{Section direction} and Theorem \ref{Strichartz e}). 

The bilinear estimates used here will be proved in a \emph{physical space approach}, relying on a new type of div-curl lemma first introduced by the third author \cite{Zhou1} and further developed in \cite{WZ1,WZ2,WZ3}.
This bilinear estimate method has shown to be a powerful tool in well-posedness theory of dispersive equations; see \cite{LSZ,LZ1,tu2025physicalspaceproofbilinear}.

Finally, we point out that the Picard iteration sequence we will use is constructed beyond the following reduced system, which is more convenient to work with.
Suppose that $(u, n, \partial_tn)$ is a sufficiently smooth solution to \eqref{Zakharov equ}. 
Define $\Lambda:=(-\Delta_x)^{1/2}$ and $v:=n+\mathrm{i}\Lambda^{-1}\partial_{t}n$.
Direct computation shows that $(E, v)$ satisfies
\begin{equation}\label{reduced equations}
    \begin{cases}
       \mathrm{i}\partial_{t}E+\Delta E=(\Re v) E,\\
        \mathrm{i}\partial_{t}v-\Lambda v=\Lambda \vert E\vert^{2}, \\
        t=0:(E,v)=(E_0, v_0) \in H^s(\mathbb{R}^d) \times H^l(\mathbb{R}^d),
    \end{cases}
\end{equation}
where $v_0 = n_{0}+\mathrm{i}\Lambda^{-1}n_{1}$. 
The Picard iteration scheme for this system is then set up as follows:
\begin{equation}\label{iteration}
    \begin{cases}
    \mathrm{i}\partial_{t}E^{(0)}+\Delta E^{(0)}=0,\\
    \mathrm{i}\partial_{t}v^{(0)}-\Lambda v^{(0)}=0,\\
    \mathrm{i} \partial_{t}E^{(k)}+\Delta E^{(k)}=(\Re v^{(k-1)})E^{(k-1)} ,k\geq1,\\
    \mathrm{i}\partial_{t}v^{(k)}-\Lambda v^{(k)}=\Lambda \vert E^{(k-1)}\vert^{2},k\geq1,\\    
    t=0:(E^{(k)},v^{(k)})=(E_{0}, v_0).
\end{cases}
\end{equation}
In this iteration process, we also have the mass conservation
\begin{equation}\label{mass conservation}
    \Vert E^{(k)}(t)\Vert_{L_{x}^{2}}=\Vert E_{0}\Vert_{L_{x}^{2}}, \qquad \forall t.
\end{equation}

Now Theorem \ref{thm} can be reduced to the proposition below: 

\begin{proposition}\label{prop}
    Let $\{E^{(k)}\}_{k=0}^{\infty}$ and $\{v^{(k)}\}_{k=0}^{\infty}$ be the iteration sequences defined by \eqref{iteration}.
    Under the assumptions
    \begin{enumerate}
        \item $d=2, s=0, l=-1/2$, or 
        \item $d=3, s>0, l=s- 1/2$,
    \end{enumerate}
    there exists $T=T\left(\Vert E_{0}\Vert_{H^{s}_x},\Vert v_{0}\Vert_{H^{l}_x}\right) > 0$ and a constant $C = C\left(\Vert E_{0}\Vert_{H^{s}_x},\Vert v_{0}\Vert_{H^{l}_x}\right) > 0$, such that for all $k = 0, 1, 2, \cdots$ we have 
\begin{gather}
        \label{bound}\Vert E^{(k)}\Vert_{L_t^{\infty}H_x^s} + \Vert E^{(k)}\Vert_{S_1(T)} \leq C, \\ 
        \label{17}\Vert \Re v^{(k)}\Vert_{S_2(T)} \leq C, \\ 
        \label{18}\Vert (\Re v^{(k)})E^{(k)}\Vert_{N_1(T)} \leq C, \\ 
        \Vert E^{(k + 3)}-E^{(k + 2)}\Vert_{L_t^{\infty}H_x^s}+\Vert E^{(k + 3)}-E^{(k + 2)}\Vert_{S_1(T)} \leq \frac{1}{2^k}, \label{E_Hs}\\ 
        \Vert \Re v^{(k + 3)}-\Re v^{(k + 2)}\Vert_{S_2(T)} \leq \frac{1}{2^{k}}\label{v_S2},\\
        \Vert (\Re v^{(k + 3)})E^{(k + 3)} - (\Re v^{(k + 2)})E^{(k + 2)}\Vert_{N_1(T)} \leq \frac{1}{2^k}.
\end{gather}
\end{proposition}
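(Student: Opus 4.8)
The plan is to argue by induction on $k$, establishing the a priori bounds \eqref{bound}--\eqref{18} simultaneously with the contraction estimates \eqref{E_Hs}--\eqref{v_S2} and its companion. The base case $k=0$ is handled by the linear Strichartz estimates \eqref{AB}, \eqref{CD} (and their three-dimensional analogues), together with the definitions of $S_1(T)$, $S_2(T)$, $N_1(T)$, since $E^{(0)}$, $v^{(0)}$ solve the homogeneous equations with data in $H^s \times H^l$; here the mass conservation \eqref{mass conservation} gives the $L_t^\infty L_x^2$ part of \eqref{bound} for free. For the inductive step I would feed the bounds \eqref{bound}--\eqref{17} for index $k-1$ into the Duhamel formula for $(E^{(k)}, v^{(k)})$. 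The wave part is the easier one: applying \eqref{CD} (or its 3D version) to the equation $\mathrm{i}\partial_t v^{(k)} - \Lambda v^{(k)} = \Lambda|E^{(k-1)}|^2$ reduces \eqref{17} to an estimate of the form $\Vert \Lambda |E^{(k-1)}|^2 \Vert \lesssim \Vert E^{(k-1)}\Vert_{S_1(T)}^2$, which is where the bilinear estimate (proved via the div-curl lemma) does the work on the low-high and high-low pieces, while the high-high piece is absorbed by a plain Strichartz/H\"older argument.

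The key step, and the one I expect to be the main obstacle, is the nonlinear estimate \eqref{18} for the Schr\"odinger nonlinearity $(\Re v^{(k)}) E^{(k)}$ in $N_1(T)$, and correspondingly the propagation bound \eqref{bound} for $E^{(k)}$ via \eqref{AB}. The product $(\Re v) E$ must be decomposed by a Littlewood--Paley trichotomy into high-low, low-high, and high-high frequency interactions relative to the two factors. The high-high interaction is controlled directly by the mixed-integrability Strichartz estimate advertised in the introduction, placing $\Re v$ and $E$ in dual Lebesgue exponents adapted to the dimension $d$; this is where the half-derivative loss of the naive $L_t^1 H_x^{-1}$ scheme is recovered by trading integrability for regularity. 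The low-high and high-low interactions are precisely the regime where the bilinear estimate yields extra smoothing: one writes the interaction as a space-time integral of a product of two waves travelling with transversal group velocities and applies the div-curl lemma, which — as noted in the introduction — here produces anisotropic integrability, different along the "main" direction in phase space than transversally. Carrying this out cleanly will require the directional decomposition of the solution hinted at before the Proposition, together with the corresponding improved directional Strichartz estimate, and this bookkeeping (choice of angular sectors, summation over dyadic frequencies, and verification that all the implicit constants depend only on $\Vert E_0\Vert_{H^s}$ and $\Vert v_0\Vert_{H^l}$ and a small power of $T$) is the technical heart of the argument.

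Once \eqref{bound}--\eqref{18} are in place for every $k$, the contraction estimates follow from the same estimates applied to differences. Subtracting the equations at levels $k+1$ and $k$, the difference $E^{(k+1)} - E^{(k)}$ solves a Schr\"odinger equation whose right-hand side is $(\Re v^{(k)})E^{(k)} - (\Re v^{(k-1)})E^{(k-1)}$, which splits as $(\Re(v^{(k)}-v^{(k-1)}))E^{(k)} + (\Re v^{(k-1)})(E^{(k)}-E^{(k-1)})$; each term is estimated by the bilinear/Strichartz machinery above, now bilinear in one "difference" factor and one "bounded" factor, producing a small constant $T^{\theta} C$ in front of the previous-level differences. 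Choosing $T$ small (depending only on the data norms) makes this constant $\le \tfrac12$ after the three-step shift indicated in the statement — the shift by $3$ in \eqref{E_Hs}--\eqref{v_S2} is exactly what accommodates the fact that the $v$-difference at level $k$ is driven by the $E$-difference at level $k-1$ and vice versa, so that one full "cycle" of the coupled iteration gains a factor $\tfrac12$. Summing the geometric series then shows $\{E^{(k)}, v^{(k)}\}$ is Cauchy in $L_t^\infty H_x^s \cap S_1(T)$ respectively $S_2(T)$, which is all that is needed downstream; the Proposition itself asserts only the uniform bounds and the geometric decay, so no passage to the limit is required here.
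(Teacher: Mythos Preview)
Your overall architecture (induction, trichotomy into low-high/high-low/high-high, bilinear div-curl for the off-diagonal pieces, differences for contraction) matches the paper, but there is a genuine gap at the inductive step. The bilinear estimates you invoke are proved in the paper only for \emph{free} solutions (Corollary \ref{coro} assumes $\mathrm{i}\partial_t E + \Delta E = 0$ and $\mathrm{i}\partial_t v - \Lambda v = 0$). For $k\geq 1$ the iterates $E^{(k)}$, $v^{(k)}$ solve \emph{inhomogeneous} equations, and you cannot simply ``feed the bounds for $k-1$ into Duhamel'' and reuse the homogeneous bilinear bound: the div-curl lemma applied to the inhomogeneous system produces source terms on the right-hand side (compare \eqref{125}--\eqref{126} with \eqref{L-H interation}--\eqref{H-L interaction}). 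Concretely, when you try to bound $\Vert v^{(k+1)}E^{(k+1)}\Vert_{N_1(T)}$ this way, the right side contains $\Vert v^{(k+1)}E^{(k)}E^{(k)}\Vert_{L^1_{t,x}}^{1/2}$, which after H\"older yields $\Vert v^{(k+1)}E^{(k)}\Vert_{N_1(T)}^{1/2}$ --- a \emph{cross term} mixing iterates at different levels. The paper therefore has to augment the induction hypothesis by the auxiliary quantity $\Vert v^{(k+1)}E^{(k)}\Vert_{N_1(T)}$ (see \eqref{T* W k}) and close a coupled bootstrap (the quantity to be bounded appears implicitly on the right of \eqref{127}--\eqref{1281}), which is a structural ingredient absent from your sketch.

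Two smaller points. First, in the paper the high-high piece $I_3$ is \emph{not} handled by pure Strichartz as you suggest; at regularity $H^{-1/2}$ for $v$ a plain product estimate loses a $\lambda^{1/2}$, and the paper closes $I_3$ with the same bilinear gain $\mu^{-1/2}$ as $I_1$. Second, the bound on $\Vert v^{(k)}\Vert_{S_2(T)}$ does not reduce to a direct inequality $\Vert\Lambda|E^{(k-1)}|^2\Vert_{N_2(T)}\lesssim \Vert E^{(k-1)}\Vert_{S_1(T)}^2$; the paper instead dualizes against an auxiliary $h$ solving the adjoint half-wave equation (see \eqref{100}--\eqref{integral K}) and runs the bilinear machinery on $\Lambda h$ and $E^{(k-1)}$. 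The contraction step likewise inherits the inhomogeneous-bilinear structure and yields a two-step recursion $R_k \lesssim T^{1/4}R_{k-1} + T^{1/2}R_{k-2}$ (Proposition \ref{5.2}) rather than the one-step gain you describe.
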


\section{Preliminaries}\label{Notations}

We write $X\lesssim Y$ to indicate $X\leq CY$, and $X\gtrsim Y$ indicate $X\geq CY$ for some harmless positive constant $C$. The symbol $X\sim Y$ implies that $X\lesssim Y$ and $X\gtrsim Y$.
The Greek letters $\lambda,\mu$ and $\sigma$ are used to denote dyadic numbers $\{2^k\colon k \in \mathbb{N}\}$, and $\omega$ the elements in $\mathbb{S}^{d - 1}$ in this paper.
The convolution of two functions $u$ and $v$ are defined as 
\begin{equation*}
    (u \ast v)(x) = \int u(x - y)v(y) dy.
\end{equation*}

For $s \in \mathbb{R}$ and $1 \leq p, q \leq \infty$, we use $L^{p}$ to denote the usual Lebesgue spaces, and $W^{s,p}$ with $H^{s}=W^{s,2}$ for the usual Sobolev spaces. 
The spacetime norm $\Vert \cdot\Vert _{L_{t}^{q}L_{x}^{p}}$ is defined by
\begin{equation*}
    \Vert u\Vert_{L_{t}^{q}L_{x}^{p}}=\Vert u\Vert_{L^q(0,T;L^{p}(\mathbb{R}^d))}:=\left(\int_{0}^{T}\left(\int_{\mathbb{R}^{d}}\vert u(t, x)\vert ^{p}dx\right)^{\frac{q}{p}}dt\right)^{\frac{1}{q}}
\end{equation*}
with usual modifications when $p = \infty$ or $q = \infty$.
When $p=q$, we will abbreviate $L_{t}^{p}L_{x}^{p}$ as $L_{t,x}^{p}$.

Given a function $u=u(t,x)$, the spatial Fourier transform of $u$ is defined by
\begin{equation*}
    \widehat{u}(\xi)=\mathcal{F}_{x}u(\xi):=\int_{\mathbb{R}^{d}}\e^{-\ii x\cdot\xi}u(x)dx,
\end{equation*}
and the Fourier inversion with respect to variable $\xi$:
\begin{equation*}
    \mathcal{F}^{-1}_{\xi}u(x):=\frac{1}{(2\pi)^{d}}\int_{\mathbb{R}^{d}}\e^{\ii x\cdot\xi}\widehat{u}(\xi)d\xi.
\end{equation*}

Let $\psi$ be a smooth non-neagtive radial function on $\mathbb{R}$ such that ${\rm supp}\ \psi\subset \{8/9\leq \vert \xi \vert\leq 9/8\}$. 
Set $\varphi(\xi)=\psi(|\xi|/2)-\psi(|\xi|)$ for $\xi \in \mathbb{R}^d$. For dyadic number $\lambda$ , we define
\begin{equation*}
    \widehat{P_{\lambda}u}(\xi) := \varphi(\lambda^{-1}\xi)\widehat{u}(\xi), \quad P_{\leq \lambda} := \sum_{\mu \leq \lambda}P_{\mu},
\end{equation*}
then we have standard inhomogeneous Littlewood-Paley decomposition
\begin{equation}\label{littewood-paley}
    u=P_{\leq 1}u + \sum_{\lambda > 1}P_{\lambda}u,
\end{equation}
With a slight abuse of notations we also write $u = \sum_{\lambda \geq 1}P_{\lambda}u$.
For the product of two functions $u$ and $v$, we write Bony's paraproduct decomposition as
\begin{equation}\label{bony}
    P_{\sigma}(uv) = P_{\sigma}uP_{\ll \sigma}v + P_{\ll \sigma}uP_{\sigma}v +\sum_{1\leq \sigma\lesssim\lambda\sim\mu} P_{\lambda}u P_{\mu}v.
\end{equation}
For $s\in \mathbb{R}$, $1\leq p\leq q \leq \infty$, the following holds, which is also known as Bernstein's inequality:
\begin{gather}\label{bernstein}
    \Vert \Lambda^sP_{\lambda}u\Vert_{L^{p}_{x}}\lesssim\lambda^{s+d\left(\frac{1}{p}-\frac{1}{q}\right)}\Vert P_{\lambda}u\Vert_{L^{q}_{x}}.
\end{gather}
The $H^s_x$ norm can be expressed in dyadic settings, i.e., 
\begin{equation*}
    \Vert u \Vert_{H^s_x} \sim \left(\sum_{\lambda \geq 1}\left(\lambda^s\Vert P_{\lambda}u\Vert_{L^2_x}\right)^2\right)^{\frac{1}{2}}.
\end{equation*}

\subsection{Decomposition of directions in phase space}\label{Section direction}
We decompose $P_{\lambda}u$ as
\begin{equation*}
     P_{\lambda}u=\sum\limits_{i=1}^{N} P_{\lambda,\omega_{i}}u,
\end{equation*}
where $N$ is a finite number depending only on dimension $d$, $\widehat{P_{\lambda,\omega_{i}}u}(\xi)=Q_{\omega_i}(\xi/\vert \xi\vert)\widehat{P_{\lambda}u}(\xi)$, $\{Q_{\omega_i}\}_{i=1}^{N}$ is the partition of unity of $\mathbb{S}^{d - 1}$ and 
in the support of $Q_{\omega_i}$, there holds
\begin{equation}\label{30}
   |\xi \cdot \omega_i| \geq 100|\xi'|.
\end{equation}
Here $\xi \in \mathbb{R}^d, \omega_i \in \mathbb{S}^{d - 1}$, $\xi'$ is the orthogonal projection of $\xi$ onto $H_{\omega_i}$, and $H_{\omega_i}$ is the orthogonal complement of $\omega_i$ in $\mathbb{R}^d$.

This decomposition is crucial in the proof of bilinear estimates for the high-low interaction between the Schr\"odinger and wave components of the solution (see Theorem \ref{theorem H-L H-H}).
In terms of the definition above, for every $P_{\lambda, \omega_i}$ we call $\omega_i$ the main direction.

\begin{remark}
    If $\omega_i = e_i = (0, \cdots, 0, 1, 0, \cdots, 0)$, where the $i$th component of $e_i$ is $1$ and others are $0$, it follows from \eqref{30} that 
\begin{equation*}
    {\rm supp}\ \widehat{P_{\lambda, e_i}u} \subset \left\{\xi = (\xi^{(1)}, \cdots, \xi^{(d)}) \in \mathbb{R}^d\colon |\xi| \sim \lambda, |\xi^{(i)}| \geq 100|\xi'|\right\}.
\end{equation*}
\end{remark}

\subsection{Iteration spaces}\label{Function Space}

In the following we fix a partition of unity $\{Q_{\omega_i}\}_{i = 1}^N$ of $\mathbb{S}^{d - 1}$.
For $1 < p, q < \infty$ and every $i = 1, \cdots, N$ we define 
\begin{equation}\label{Lpq}
    \Vert u\Vert_{L_{\omega_i}^qL_{\omega_{i\bot}}^p} := \left(\int_{\mathbb{R}}\left(\int_{H_{\omega_i}}|u(a\omega_i + x')|^p dx'\right)^{\frac{q}{p}} da\right)^{\frac{1}{q}}
\end{equation}
with usual modifications when $p = \infty$ or $q = \infty$. 

For $d=2$, we set
\begin{equation*}
    \Vert u\Vert_{S_1(T)} := \left(\sum_{i = 1}^N\sum_{\lambda\geq 1}\Vert P_{\lambda}u\Vert_{L_{t}^{4}L_{\omega_i}^{2}L_{\omega_{i\bot}}^{\infty}}^{2}\right)^{\frac{1}{2}}.
\end{equation*}
Then $S_1(T)$ is defined as the completion of all test functions under this norm.
We denote by 
\begin{equation*}
    \Vert u\Vert_{N_1(T)} := \left(\sum_{i = 1}^N\sum_{\lambda\geq 1}\Vert P_{\lambda}u\Vert_{L_{t}^{\frac{4}{3}}L_{\omega_i}^{2}L_{\omega_{i\bot}}^{1}}^{2}\right)^{\frac{1}{2}}.
\end{equation*}
For simplicity, we also set 
\begin{gather*}
    S_2(T) := L_{t}^{\infty}H^{-\frac{1}{2}}_x, \quad N_2(T) := L_t^1H_x^{-\frac{1}{2}}, \quad X(T) := L^{\infty}_tL_x^2 \cap S_1(T).
\end{gather*}

For $d = 3$, we define ($s > 0$)
\begin{gather*}
    \Vert u\Vert_{S_1(T)} := \left(\sum_{i = 1}^N\sum_{\lambda\geq 1}\Vert P_{\lambda}u\Vert_{L_{t}^{2}L_{\omega_i}^{2}L_{\omega_{i\bot}}^{\infty}}^{2}\right)^{\frac{1}{2}}, \\ 
    \Vert u\Vert_{N_1(T)} := \left(\sum_{i = 1}^N\sum_{\lambda\geq 1}\lambda^{4s}\Vert P_{\lambda}u\Vert_{L_{t}^{2}L_{\omega_i}^{2}L_{\omega_{i\bot}}^1}^{2}\right)^{\frac{1}{2}}, \\
    S_2(T) := L_t^{\infty}H_x^{s - \frac{1}{2}}, \\ 
    N_2(T) := L_t^1H_x^{s - \frac{1}{2}}, \\
    X(T) := L_t^{\infty}H_x^s \cap S_1(T).
\end{gather*}

\begin{remark}
    If $\omega_i = e_i$, by \eqref{Lpq} we have 
\begin{equation*}
    \Vert u\Vert_{L_{e_i}^qL_{e_i\bot}^p} = \left(\int_{\mathbb{R}}\left(\int_{\mathbb{R}^{d - 1}}|u|^p dy\right)^{\frac{q}{p}} dx_i\right)^{\frac{1}{q}} =: \Vert u\Vert_{L_{x_i}^qL_y^p} \qquad (x_i \in \mathbb{R}, y \in \mathbb{R}^{d - 1}).
\end{equation*}
\end{remark}

\begin{remark}
    It is worth mentioning that the $L_{\omega_i}^qL_{\omega_{i\bot}}^p$ norm \eqref{Lpq} is analogous in form to the lateral spacetime norm $\Vert \cdot \Vert_{L_{\mathrm{e}}^{p, q}}\ (\mathrm{e} \in \mathbb{S}^{d - 1})$ used in the study of Schr\"odinger map \cite{IK1,BIKT}.
    However, there is a fundamental difference since our norms are defined in the way where $\omega_i$ serves as the main direction.
\end{remark}

\section{Refined Strichartz Estimates}\label{Section Strichartz}
The Schr\"odinger semigroup $\{\e^{\ii t\Delta}\}_{t > 0}$ is defined as 
\begin{equation*}
    \widehat{\e^{\ii t\Delta}E_0}(t, \xi) := \e^{-\ii t|\xi|^2}\widehat{E_0}(\xi).
\end{equation*}
It obeys conservation of mass
\begin{equation}\label{mass}
    \Vert \e^{\ii t\Delta}E_0\Vert_{L_x^2} = \Vert E_0\Vert_{L_x^2}.
\end{equation}
We first present the following lemma concerning its decay property at infinity.

\begin{lemma}[$L^1-L^{\infty}$ estimate]\label{l1-linfty lemma}
Let $E\colon [0, T] \times \mathbb{R}^d \rightarrow \mathbb{C}$ satisfy
           \begin{equation}\label{strichartz_equ1}
       \begin{cases}
           {\rm i}\partial_{t}E+\Delta E= 0,\\
           t = 0\colon E = E_0. 
       \end{cases}
   \end{equation}
    There holds 
    \begin{equation*}
        \Vert P_{\lambda}E(x_1, \cdot)\Vert_{L_y^{\infty}(\mathbb{R}^{d - 1})} \lesssim \int_{\mathbb{R}}(\lambda^{-1} + \sqrt{t} + |x_1-x_{1}'|)^{-d}\Vert P_{\lambda}E_0(x_1', \cdot)\Vert_{L_y^1(\mathbb{R}^{d - 1})}dx_{1}',
    \end{equation*}
    where $x = (x_1, y) \in \mathbb{R} \times \mathbb{R}^{d - 1}$, $\lambda \geq 1$ dyadic.
    \begin{proof}
        Since ${\rm supp}\ \widehat{P_{\lambda}E} \subset \{|\xi| \sim \lambda\}$, it follows that 
        \begin{equation*}
            \displaystyle\mathcal{F}_{x}\left({\e^{\ii t\Delta}P_{\lambda}E_0}\right)(t, \xi) = \e^{-\ii t|\xi|^2}\widehat{P_{\lambda}E_0}(\xi) = \e^{-\ii t|\xi|^2}\chi\left(\frac{\xi}{\lambda}\right)\widehat{P_{\lambda}E_0}(\xi),
        \end{equation*}
        where $\chi$ is a smooth cut-off satisfying $\mathrm{supp}\ \chi\supset \mathrm{supp}\ \widehat{P_{1}E_{0}}$, and we have 
        \begin{equation*}
            \e^{\ii t\Delta}P_{\lambda}E_0 = \varphi_{\lambda}(t, \cdot) \ast P_{\lambda}E_0, \qquad \widehat{\varphi_{\lambda}}(t, \xi) = \e^{-\ii t|\xi|^2}\chi\left(\frac{\xi}{\lambda}\right).
        \end{equation*}
        By Minkowski's inequality and a scaling argument, it suffices to prove
        \begin{equation*}
            \Vert \varphi_1(t, x_1, \cdot)\Vert_{L_y^{\infty}(\mathbb{R}^{d - 1})} \lesssim (1 + \sqrt{t} + |x_1|)^{-d}.
        \end{equation*}
        
        Firstly, by \eqref{mass} and Bernstein's inequality we have (note that $\lambda = 1$)
        \begin{equation}\label{38}
            \begin{aligned}
            \Vert \varphi_1(t,x_1,\cdot)\Vert_{L^{\infty}_{y}(\mathbb{R}^{d - 1})}\lesssim \Vert \varphi_1(t,x_1,\cdot)\Vert_{L^2_{y}(\mathbb{R}^{d - 1})} = \Vert \mathcal{F}_{\xi}^{-1}\chi\Vert_{L_y^2(\mathbb{R}^{d - 1})} \lesssim 1. 
            \end{aligned}
        \end{equation}
        
        Secondly, direct computations show that (see \cite{stein1993harmonic}) 
        \begin{equation*}
            \varphi_1(t,x)=\left(\frac{\mathrm{1}}{(\pi \mathrm{i} t)^{\frac{d}{2}}}\e^{\frac{\mathrm{i}\vert \cdot\vert^{2}}{t}}* \mathcal{F}^{-1}_{\xi}{\chi}\right)(x).
        \end{equation*}
        It follows from Minkowski's inequality that 
        \begin{equation}\label{l1-linfty}
            \Vert \varphi_1(t,x_1, \cdot)\Vert_{L^{\infty}_{y}(\mathbb{R}^{d - 1})} \lesssim t^{-\frac{d}{2}}\Vert \mathcal{F}^{-1}_{\xi}\chi\Vert_{L^{1}(\mathbb{R}^{d})}\lesssim t^{-\frac{d}{2}}.
        \end{equation}
        
        Finally, we set 
        \begin{gather*}
            \phi(\xi)=x\cdot\xi-t\vert \xi\vert^{2}, \\ 
            D=\{\xi\in\mathbb{R}^{d}:\vert x_1-2t\xi_{1}\vert\geq \vert x_1\vert/2 \},
        \end{gather*}
        and decompose $\varphi_{1}$ into two parts:
        \begin{align*}
        \varphi_1(t,x)=\int_{\mathbb{R}^{d}}\e^{\ii\phi(\xi)}\chi(\xi)d\xi =\int_{D}\e^{\ii\phi(\xi)}\chi(\xi)d\xi+\int_{\mathbb{R}^{d}\setminus D}\e^{\ii\phi(\xi)}\chi(\xi)d\xi =:\varphi_{1,1}(t,x)+\varphi_{1,2}(t,x).
        \end{align*}
        For $\varphi_{1,1}(t,x)$, note that 
        \begin{equation*}
            \e^{\ii\phi} = \frac{1}{\ii(x_1 - 2t\xi_1)}\frac{\partial\e^{\ii\phi}}{\partial\xi_1}.
        \end{equation*}
        Then 
        \begin{equation*}
            \varphi_{1, 1}(t, x) = \int_D\frac{1}{\ii(x_1 - 2t\xi_1)}\frac{\partial\e^{\ii\phi}}{\partial\xi_1}\chi(\xi) d\xi.
        \end{equation*}
        Integration by parts $d$ times with respect to $\xi_{1}$ shows that
        \begin{equation*}
        \Vert \varphi_{1,1}(t,x_1,\cdot)\Vert_{L_{y}^{\infty}}\lesssim \left\Vert|x_1 - 2t\xi_{1}|^{-d}\right\Vert_{L^{\infty}_{\xi_{1}}} \lesssim \vert x_1\vert^{-d}.
    \end{equation*}
    For $\varphi_{1,2}$, we further rescale it as follows: Set
    \begin{equation*}
        \Phi_{1, 2}(s, \widetilde{x_1}, \widetilde{y}) = \varphi_{1, 2}(ts, \sqrt{t}x_1, \sqrt{t}y).
    \end{equation*}
    Then $\Phi_{1, 2}$ satisfies 
    \begin{equation*}
        \begin{cases}
        \ii\partial_s\Phi + \widetilde{\Delta}\Phi = 0, \\ 
        s = 0\colon \Phi = \mathscr{F}_{\xi}^{-1}(\chi(\sqrt{t} \cdot)),
        \end{cases}
    \end{equation*}
    where $\widetilde{\Delta} = \sum_{i = 1}^d\partial_{\widetilde{x_i}}^2$.
    By \eqref{l1-linfty} we have 
    \begin{equation}\label{52}
        \Vert \Phi(s, \widetilde{x_1}, \cdot)\Vert_{L_{\widetilde{y}}^{\infty}} \lesssim 1^{-\frac{d}{2}} \cdot (\sqrt{t})^{-d} = t^{-\frac{d}{2}}.
    \end{equation}
    Note that in the support of $\Phi_{1, 2}$,
    \begin{equation*}
        |\widetilde{x_1} - 2\widetilde{\xi_1}| \leq \frac{1}{2}|\widetilde{x_1}|.
    \end{equation*}
    It follows that 
    \begin{equation*}
        |\widetilde{x_1}| \sim |\widetilde{\xi_1}| \lesssim \widetilde{\lambda} = \sqrt{t}.
    \end{equation*}
    Now from \eqref{52}
    \begin{equation*}
        \Vert \Phi_{1, 2}(s, \widetilde{x_1}, \cdot)\Vert_{L_{\widetilde{y}}^{\infty}} \lesssim |\widetilde{x_1}|^{-d}.
    \end{equation*}
    Namely, 
    \begin{equation}\label{40}
        \Vert \varphi_{1, 2}(t, x_1, \cdot)\Vert_{L_y^{\infty}} \lesssim |x_1|^{-d}.
    \end{equation}
    Combining \eqref{38}, \eqref{l1-linfty} and \eqref{40}, we complete the proof.
    \end{proof}
\end{lemma}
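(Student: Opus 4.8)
The plan is to write $P_\lambda E(t)=\varphi_\lambda(t,\cdot)\ast P_\lambda E_0$ with $\widehat{\varphi_\lambda}(t,\xi)=\e^{-\ii t|\xi|^2}\chi(\xi/\lambda)$, where $\chi$ is a fixed smooth bump equal to $1$ on $\mathrm{supp}\,\widehat{P_1E_0}$, and to reduce the whole estimate to a pointwise bound on $\varphi_\lambda$. A parabolic rescaling $\xi\mapsto\lambda\xi$ gives $\varphi_\lambda(t,x)=\lambda^d\varphi_1(\lambda^2t,\lambda x)$, and since $(1+\lambda\sqrt t+\lambda|z|)^{-d}=\lambda^{-d}(\lambda^{-1}+\sqrt t+|z|)^{-d}$ the factor $\lambda^d$ is exactly absorbed; thus it suffices to prove
\[
    \|\varphi_1(t,x_1,\cdot)\|_{L^\infty_y(\mathbb{R}^{d-1})}\lesssim(1+\sqrt t+|x_1|)^{-d}.
\]
Granting this, one applies Minkowski's integral inequality to the convolution and uses that the kernel is translation invariant in $y$; the $y'$-integration produces the $L^1_y$ norm of $P_\lambda E_0(x_1',\cdot)$ and the remaining $x_1'$-integral produces the stated weighted convolution.

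For the $\lambda=1$ bound I would establish three estimates and take their minimum, each governing one regime. First, $\|\varphi_1(t,x_1,\cdot)\|_{L^\infty_y}\lesssim1$: the function $y\mapsto\varphi_1(t,x_1,y)$ has compact frequency support, so Bernstein reduces this to an $L^2_y$ bound, which follows from Plancherel in $y$ since the Schr\"odinger symbol $\e^{-\ii t|\xi|^2}$ has modulus one; this covers $\sqrt t+|x_1|\lesssim1$. Second, $\|\varphi_1(t,x_1,\cdot)\|_{L^\infty_y}\lesssim t^{-d/2}$: this is immediate from the explicit representation $\varphi_1(t,\cdot)=(\pi\ii t)^{-d/2}\e^{\ii|\cdot|^2/t}\ast\mathcal{F}_\xi^{-1}\chi$ and Minkowski's inequality, with $\mathcal{F}_\xi^{-1}\chi\in L^1(\mathbb{R}^d)$; this covers the regime in which $t$ is largest. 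Third, and this is the delicate one, $\|\varphi_1(t,x_1,\cdot)\|_{L^\infty_y}\lesssim|x_1|^{-d}$, which covers the regime in which $|x_1|$ is largest.

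The third bound is the main obstacle, because it is a non-stationary-phase statement for the oscillatory integral $\varphi_1(t,x)=\int_{\mathbb{R}^d}\e^{\ii\phi(\xi)}\chi(\xi)\,d\xi$, $\phi(\xi)=x\cdot\xi-t|\xi|^2$, in which good decay is available only by integrating by parts in the \emph{single} variable $\xi_1$, along which $\partial_{\xi_1}\phi=x_1-2t\xi_1$. I would split the $\xi$-integral according to $D=\{|x_1-2t\xi_1|\geq|x_1|/2\}$ and its complement. On $D$ one has $|\partial_{\xi_1}\phi|\gtrsim|x_1|$, so using $\e^{\ii\phi}=(\ii(x_1-2t\xi_1))^{-1}\partial_{\xi_1}\e^{\ii\phi}$ and integrating by parts $d$ times in $\xi_1$ gives the contribution $\lesssim|x_1|^{-d}$, uniformly in $y$. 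On the complement, the constraint $|x_1-2t\xi_1|<|x_1|/2$ together with $|\xi_1|\lesssim1$ on $\mathrm{supp}\,\chi$ forces $|x_1|\sim2t|\xi_1|\lesssim t$; I would then rescale parabolically, $\Phi(s,\widetilde x):=\varphi_{1,2}(ts,\sqrt t\,\widetilde x)$, so that $\Phi$ solves a free Schr\"odinger equation whose data has frequency support of size $\sim\sqrt t$, apply the $t^{-d/2}$ dispersive estimate above in the rescaled variables, and translate back, using the geometric relation $|x_1|\lesssim t$ to convert the rescaled decay into $\lesssim|x_1|^{-d}$. Combining the three bounds gives the $\lambda=1$ case, hence the lemma. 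The one point requiring care is ensuring that the near-stationary piece yields the full power $|x_1|^{-d}$ rather than only a partial bound, which is precisely where the constraint tying $|x_1|$ to $t$ on the complement of $D$ is used.
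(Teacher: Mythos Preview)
Your proposal follows the paper's proof essentially step for step: the convolution-kernel representation $P_\lambda E=\varphi_\lambda\ast P_\lambda E_0$, the parabolic rescaling to $\lambda=1$, the three separate bounds $\lesssim 1$, $\lesssim t^{-d/2}$, $\lesssim |x_1|^{-d}$, and---for the last one---the splitting at $D=\{|x_1-2t\xi_1|\geq|x_1|/2\}$ with $d$-fold integration by parts in $\xi_1$ on $D$ and a further parabolic rescaling combined with the dispersive bound on the complement, are all exactly what the paper does.
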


\begin{theorem}[Refined Strichartz estimates]\label{Strichartz e}
    Let $E\colon [0, T] \times \mathbb{R}^d \rightarrow \mathbb{C}$ satisfy
           \begin{equation}\label{strichartz_equ}
       \begin{cases}
           {\rm i}\partial_{t}E+\Delta E= F,\\
           t = 0\colon E = E_0. 
       \end{cases}
   \end{equation}
   Set $x = (x_1, y) \in \mathbb{R} \times \mathbb{R}^{d - 1}$. Then we have 
   \begin{gather*}
    \Vert P_{\lambda}E\Vert_{L_t^4L_{x_1}^2L_y^{\infty}} \lesssim \Vert P_{\lambda}E_0\Vert_{L_x^2} + \Vert P_{\lambda}F\Vert_{L_t^{\frac{4}{3}}L_{x_1}^2L_y^1} \qquad (d = 2), \\ 
    \Vert P_{\lambda}E\Vert_{L_t^2L_{x_1}^2L_y^{\infty}} \lesssim (T\lambda^2)^{\varepsilon}\left(\Vert P_{\lambda}E_0\Vert_{L_x^2} + (T\lambda^2)^{\varepsilon}\Vert P_{\lambda}F\Vert_{L_t^2L_{x_1}^2L_y^1}\right) \qquad (d = 3),
   \end{gather*}
   where $\varepsilon > 0$ can be sufficiently small.
   \begin{proof}
    By Duhamel's principle, 
    \begin{equation*}
        P_{\lambda}E(t)=\e^{\ii t\Delta}P_{\lambda}E_0-\ii\int_{0}^{t}\e^{\ii(t-s)\Delta}P_{\lambda}F(s)ds, \qquad t \in [0, T].
    \end{equation*}
    We first consider the case $d = 2$. By duality and Christ-Kiselev lemma (see \cite{smithglobal}) it suffices to prove 
    \begin{equation}\label{F_strichartz}
        \left\Vert \int_0^T\e^{\ii(t-s)\Delta}P_{\lambda}F(s)ds\right\Vert_{L_{t}^{4}L_{x_1}^{2}L_{y}^{\infty}}\lesssim \Vert P_{\lambda}F\Vert_{L_{t}^{\frac{4}{3}}L_{x_1}^{2}L_{y}^{1}}.
    \end{equation}
    By Lemma \ref{l1-linfty lemma}, Minkowski's inequality, H\"older's inequality and Young's inequality of convolution type we have 
    \begin{equation*}
        \begin{aligned}
            \left\Vert\int_0^T\e^{\ii(t-s)\Delta}P_{\lambda}F(s)ds \right\Vert_{L_{x_1}^2L_y^{\infty}} &\lesssim \int_0^T\Vert \e^{\ii(t - s)\Delta}P_{\lambda}F(s)\Vert_{L_{x_1}^2L_y^{\infty}} ds \\
            &\lesssim \int_0^T\Vert (\lambda^{-1} + \sqrt{t - s} + |x_1|)^{-2}\Vert_{L_{x_1}^1}\Vert P_{\lambda}F(s)\Vert_{L_{x_1}^2L_y^1} ds \\ 
            &\lesssim \int_0^T (\lambda^{-1} + \sqrt{t - s})^{-1}\Vert P_{\lambda}F(s)\Vert_{L_{x_1}^2L_y^1} ds \\
            &\lesssim \int_{\mathbb{R}}|t - s|^{-\frac{1}{2}}\Vert P_{\lambda}F(s)\Vert_{L_{x_1}^2L_y^1} ds,
        \end{aligned}
    \end{equation*}
    and the desired conclusion follows from Hardy-Littlewood-Sobolev inequality.

    For the case $d = 3$, we first prove 
    \begin{equation*}
        \left\Vert \int_{0}^{t}\e^{\ii(t-s)\Delta}P_{\lambda}F(s)ds\right\Vert_{L_{t}^{2}L_{x_1}^{2}L_{y}^{\infty}}\lesssim (T\lambda^2)^{\varepsilon}\Vert P_{\lambda}F\Vert_{L_{t}^{2}L_{x_1}^{2}L_{y}^{1}}.
    \end{equation*}
    Indeed, similar to the case $d = 2$ we have 
    \begin{equation*}
        \begin{aligned}
            \left\Vert \int_{0}^{t}\e^{\ii(t-s)\Delta}P_{\lambda}F(s)ds\right\Vert_{L_{x_1}^{2}L_{y}^{\infty}}&\lesssim \int_0^t(\lambda^{-1} + \sqrt{t - s})^{-2}\Vert P_{\lambda}F(s)\Vert_{L_{x_1}^2L_y^1} ds \\ 
            &\lesssim \int_0^t(\lambda^{-2} + |t - s|)^{-1}\Vert P_{\lambda}F(s)\Vert_{L_{x_1}^2L_y^1} ds \\ 
            &\lesssim \int_0^T(\lambda^{-2} + |t - s|)^{-1}\Vert P_{\lambda}F(s)\Vert_{L_{x_1}^2L_y^1} ds.
        \end{aligned}
    \end{equation*}
    It follows from Young's inequality of convolution type that 
    \begin{equation*}
        \begin{aligned}
            \left\Vert \int_{0}^{t}\e^{\ii(t-s)\Delta}P_{\lambda}F(s)ds\right\Vert_{L_t^2L_{x_1}^{2}L_{y}^{\infty}}&\lesssim \ln\left(\frac{\lambda^{-2} + T}{\lambda^{-2}}\right)\Vert P_{\lambda}F\Vert_{L_t^2L_{x_1}^2L_y^1} \\ 
            &= \ln(1 + T\lambda^2)\Vert P_{\lambda}F\Vert_{L_t^2L_{x_1}^2L_y^1} \\ 
            &\lesssim (T\lambda^2)^{2\varepsilon}\Vert P_{\lambda}F\Vert_{L_t^2L_{x_1}^2L_y^1},
        \end{aligned}
    \end{equation*}
    where $\varepsilon > 0$ can be arbitrarily small. Moreover, the deductions above also imply the fact 
    \begin{equation*}
        \left\Vert \int_{0}^T\e^{\ii(t-s)\Delta}P_{\lambda}F(s)ds\right\Vert_{L_{t}^{2}L_{x_1}^{2}L_{y}^{\infty}}\lesssim (T\lambda^2)^{2\varepsilon}\Vert P_{\lambda}F\Vert_{L_{t}^{2}L_{x_1}^{2}L_{y}^{1}}.
    \end{equation*}
    Then by duality we have 
    \begin{equation*}
        \Vert \e^{\ii t\Delta}P_{\lambda}E_0\Vert_{L_t^2L_{x_1}^2L_y^{\infty}} \lesssim (T\lambda^2)^{\varepsilon}\Vert P_{\lambda}E_0\Vert_{L_x^2}.
    \end{equation*}
    This completes the proof.
   \end{proof}
\end{theorem}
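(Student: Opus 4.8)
The plan is to reduce the whole estimate to the dispersive decay bound of Lemma~\ref{l1-linfty lemma} and then run a $TT^{*}$ / Hardy--Littlewood--Sobolev argument adapted to the mixed norm $L_t^qL_{x_1}^2L_y^\infty$. By Duhamel's formula one has
\[
P_\lambda E(t)=\e^{\ii t\Delta}P_\lambda E_0-\ii\int_0^t\e^{\ii(t-s)\Delta}P_\lambda F(s)\,ds ,
\]
so it suffices to bound the homogeneous term $\e^{\ii t\Delta}P_\lambda E_0$ and the Duhamel term separately. Since the retarded integral is inconvenient to handle head on, I would first estimate the untruncated operator $F\mapsto\int_0^T\e^{\ii(t-s)\Delta}P_\lambda F(s)\,ds$ and then recover the retarded version from it.

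For the untruncated operator, fix $t$, apply Lemma~\ref{l1-linfty lemma} with $E_0$ replaced by $F(s)$ and $t$ by $t-s$, and combine this with Minkowski's inequality in $s$ and Young's convolution inequality in $x_1$ to obtain
\[
\Bigl\|\textstyle\int_0^T\e^{\ii(t-s)\Delta}P_\lambda F(s)\,ds\Bigr\|_{L_{x_1}^2L_y^\infty}\lesssim\int_0^T\bigl\|(\lambda^{-1}+\sqrt{|t-s|}+|\cdot|)^{-d}\bigr\|_{L_{x_1}^1}\,\|P_\lambda F(s)\|_{L_{x_1}^2L_y^1}\,ds .
\]
A one-line computation gives $\|(\lambda^{-1}+r+|\cdot|)^{-d}\|_{L_{x_1}^1}\sim(\lambda^{-1}+r)^{-(d-1)}$, so with $r=\sqrt{|t-s|}$ the kernel in time is comparable to $(\lambda^{-2}+|t-s|)^{-(d-1)/2}$. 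For $d=2$ this is $\lesssim|t-s|^{-1/2}$; taking the $L_t^4$ norm and invoking the one-dimensional Hardy--Littlewood--Sobolev inequality (kernel $|t|^{-1/2}$, exponents $\tfrac43\to4$) yields the claimed inhomogeneous bound, and the retarded integral $\int_0^t$ then follows by the Christ--Kiselev lemma since $\tfrac43<4$. For $d=3$ the time kernel is $\lesssim(\lambda^{-2}+|t-s|)^{-1}$, which is not locally integrable but has $L^1([0,T])$ norm $\sim\ln(1+T\lambda^2)$, bounded by $C_\varepsilon(T\lambda^2)^\varepsilon$ for any $\varepsilon>0$; Young's inequality $L^1\ast L^2\to L^2$ then gives the bound with the stated loss, and here one must estimate $\int_0^t$ directly (its kernel is still dominated by the same $L^1([0,T])$ norm) because Christ--Kiselev is unavailable at the endpoint exponent pair $2=2$.

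For the homogeneous term I would argue by duality. Up to harmless constants, the map $E_0\mapsto\e^{\ii t\Delta}P_\lambda E_0$ composed with its adjoint equals the untruncated operator above, so for $d=2$ the bound $L_x^2\to L_t^4L_{x_1}^2L_y^\infty$ is equivalent to the bound $L_t^{4/3}L_{x_1}^2L_y^1\to L_t^4L_{x_1}^2L_y^\infty$ already established, whereas for $d=3$ distributing the $(T\lambda^2)^\varepsilon$ loss across the two factors of $TT^{*}$ gives $L_x^2\to L_t^2L_{x_1}^2L_y^\infty$ with loss $(T\lambda^2)^{\varepsilon/2}$. Combining the homogeneous and inhomogeneous contributions finishes the proof.

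The step I expect to be the main obstacle is the handling of the time truncation. The clean Christ--Kiselev reduction is available only in dimension two; in dimension three the $L_t^2\to L_t^2$ estimate is of endpoint type, so one is forced to work with the retarded kernel directly and to carry along the logarithmic factor $\ln(1+T\lambda^2)$ --- equivalently the small power $(T\lambda^2)^\varepsilon$ --- which is precisely the loss appearing in the statement. The remaining ingredients, namely the evaluation of $\|(\lambda^{-1}+r+|\cdot|)^{-d}\|_{L^1_{x_1}}$ and a single application of Hardy--Littlewood--Sobolev or Young's inequality, are routine.
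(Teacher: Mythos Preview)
Your proposal is correct and follows essentially the same route as the paper: reduce to the dispersive decay of Lemma~\ref{l1-linfty lemma}, compute the $L^1_{x_1}$ norm of the kernel to obtain the time kernel $(\lambda^{-2}+|t-s|)^{-(d-1)/2}$, then use Hardy--Littlewood--Sobolev plus Christ--Kiselev in $d=2$, and Young's inequality with the logarithmic loss (handled directly on the retarded integral, since Christ--Kiselev fails at the $L^2_t\to L^2_t$ endpoint) in $d=3$, with the homogeneous piece in both cases recovered by the $TT^{*}$ duality argument you describe.
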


Using the notations in Section \ref{Function Space}, by Theorem \ref{Strichartz e} we have 
\begin{gather}
    \label{stricahrtz d=2}\sum_{i = 1}^N\Vert P_{\lambda}E\Vert_{L_{t}^{4}L_{\omega_i}^{2}L_{\omega_{i\bot}}^{\infty}}\lesssim \Vert P_{\lambda}E_0\Vert_{L_{x}^{2}}+\sum_{i = 1}^N\Vert P_{\lambda}F\Vert_{L_{t}^{\frac{4}{3}}L_{\omega_i}^{2}L_{\omega_{i\bot}}^{1}} \ (d = 2), \\ 
    \label{stricahrtz d=3}\sum_{i = 1}^N\Vert P_{\lambda}E\Vert_{L_{t}^{2}L_{\omega_i}^{2}L_{\omega_{i\bot}}^{\infty}}\lesssim (T\lambda^2)^{\varepsilon}\left(\Vert P_{\lambda}E_0\Vert_{L^{2}_{x}}+(T\lambda^2)^{\varepsilon}\sum_{i = 1}^N\Vert P_{\lambda}F\Vert _{L_{t}^{2}L_{\omega_i}^{2}L_{\omega_{i\bot}}^{1}}\right) \ (d = 3, \varepsilon > 0).
\end{gather}

\section{Div-curl Lemma and Bilinear Estimates}\label{Section Div curl}

As mentioned earlier, we need extra bilinear estimates to handle the high-low and low-high interactions in the nonlinearity during the iteration. 
These bilinear estimates will be proved by the div-curl type lemma below.

\begin{lemma}[Div-curl lemma \cite{Zhou1,WZ1}]\label{div-curl}
    Suppose that 
    \begin{equation*}
        \begin{cases}
            \partial_{t}f^{11}+\partial_{x}f^{12}=G^{1},\\
            \partial_{t}f^{21}-\partial_{x}f^{22}=G^{2},
        \end{cases}
             \quad(t,x)\in[0,T]\times \mathbb{R}
    \end{equation*}
    and $f^{ij}\rightarrow 0 \ (i, j = 1, 2)$ as $x\rightarrow \infty$. Then
    \begin{equation*}
        \begin{aligned}
            \int_0^T\int_{\mathbb{R}}(f^{11}f^{22}+f^{12}f^{21})dxdt\lesssim&\left(\Vert f^{11}(0)\Vert_{L^{1}_{x}}+\Vert f^{11}\Vert_{L_{t}^{\infty}L_{x}^{1}}+\Vert G^{1}\Vert_{L_{t,x}^{1}}\right)\\
    \cdot&\left(\Vert f^{21}(0)\Vert_{L^{1}_{x}}+\Vert f^{21}\Vert_{L_{t}^{\infty}L_{x}^{1}}+\Vert G^{2}\Vert_{L_{t,x}^{1}}\right)
        \end{aligned}
    \end{equation*}
    provided that the right-hand side is bounded.
    \begin{proof}
        See \cite[Lemma 2.1]{WZ1}.
    \end{proof}
\end{lemma}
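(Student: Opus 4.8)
\textit{Proof proposal.}

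The plan is to integrate the two evolution equations in the $x$-variable, introducing scalar potentials for $f^{11}$ and $f^{21}$; this turns the bilinear quantity $f^{11}f^{22}+f^{12}f^{21}$ into a pure $(t,x)$-divergence plus genuinely lower-order terms involving the sources $G^{1},G^{2}$, after which one integrates the divergence and absorbs the boundary contributions into the right-hand side. By a routine density argument we may assume all $f^{ij},G^{i}$ are smooth and rapidly decaying in $x$, so that every manipulation below is justified.

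First I would set
\[
\phi(t,x):=\int_{x}^{\infty}f^{11}(t,y)\,dy,\qquad \psi(t,x):=\int_{x}^{\infty}f^{21}(t,y)\,dy,\qquad \widetilde{G}^{i}(t,x):=\int_{x}^{\infty}G^{i}(t,y)\,dy,
\]
so that $\partial_{x}\phi=-f^{11}$, $\partial_{x}\psi=-f^{21}$, $\phi(t,+\infty)=\psi(t,+\infty)=0$, and $\|\phi(t,\cdot)\|_{L^{\infty}_{x}}\le\|f^{11}(t,\cdot)\|_{L^{1}_{x}}$, $\|\psi(t,\cdot)\|_{L^{\infty}_{x}}\le\|f^{21}(t,\cdot)\|_{L^{1}_{x}}$, $\|\widetilde G^{i}(t,\cdot)\|_{L^{\infty}_{x}}\le\|G^{i}(t,\cdot)\|_{L^{1}_{x}}$. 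Integrating the first and second equations in $y$ from $x$ to $\infty$ and using the decay of $f^{12},f^{22}$ gives $\partial_{t}\phi=f^{12}+\widetilde G^{1}$ and $\partial_{t}\psi=\widetilde G^{2}-f^{22}$, i.e. $f^{12}=\partial_{t}\phi-\widetilde G^{1}$ and $f^{22}=\widetilde G^{2}-\partial_{t}\psi$. Substituting these together with $f^{11}=-\partial_{x}\phi$, $f^{21}=-\partial_{x}\psi$ into the bilinear expression, and using the identity $\partial_{x}\phi\,\partial_{t}\psi-\partial_{t}\phi\,\partial_{x}\psi=\partial_{x}(\phi\,\partial_{t}\psi)-\partial_{t}(\phi\,\partial_{x}\psi)$ --- whose validity is precisely the compatibility of the two equations for the mixed second derivatives of $\psi$ --- one arrives at the pointwise identity
\[
f^{11}f^{22}+f^{12}f^{21}=\partial_{x}(\phi\,\partial_{t}\psi)-\partial_{t}(\phi\,\partial_{x}\psi)+f^{11}\widetilde G^{2}-f^{21}\widetilde G^{1}.
\]

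Then I would integrate this identity over $[0,T]\times\mathbb{R}$. The two source terms are handled at once: $\iint|f^{11}\widetilde G^{2}|\le\|f^{11}\|_{L^{\infty}_{t}L^{1}_{x}}\|G^{2}\|_{L^{1}_{t,x}}$ and $\iint|f^{21}\widetilde G^{1}|\le\|f^{21}\|_{L^{\infty}_{t}L^{1}_{x}}\|G^{1}\|_{L^{1}_{t,x}}$. Integrating $\partial_{x}(\phi\,\partial_{t}\psi)$ in $x$ picks up only the value at $x=-\infty$, namely $-m^{1}(t)\,\partial_{t}m^{2}(t)$ with $m^{i}(t):=\int_{\mathbb{R}}f^{i1}(t,\cdot)$; since $\partial_{t}m^{2}(t)=\int_{\mathbb{R}}G^{2}(t,\cdot)$ (from the second equation and the decay of $f^{22}$), this is bounded after integrating in $t$ by $\|f^{11}\|_{L^{\infty}_{t}L^{1}_{x}}\|G^{2}\|_{L^{1}_{t,x}}$. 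Integrating $-\partial_{t}(\phi\,\partial_{x}\psi)=\partial_{t}(\phi f^{21})$ in $t$ leaves $\int_{\mathbb{R}}(\phi f^{21})(T,\cdot)-\int_{\mathbb{R}}(\phi f^{21})(0,\cdot)$, which is bounded by $\|f^{11}\|_{L^{\infty}_{t}L^{1}_{x}}\|f^{21}\|_{L^{\infty}_{t}L^{1}_{x}}+\|f^{11}(0)\|_{L^{1}_{x}}\|f^{21}(0)\|_{L^{1}_{x}}$. Adding up, every term is dominated by the product of $\|f^{11}(0)\|_{L^{1}_{x}}+\|f^{11}\|_{L^{\infty}_{t}L^{1}_{x}}+\|G^{1}\|_{L^{1}_{t,x}}$ and $\|f^{21}(0)\|_{L^{1}_{x}}+\|f^{21}\|_{L^{\infty}_{t}L^{1}_{x}}+\|G^{2}\|_{L^{1}_{t,x}}$, which is the asserted bound.

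The crux of the argument is the observation that, after the potential substitution, the bilinear form collapses to a genuine space-time divergence plus only source-type lower-order terms; this is the compensated-compactness ("div-curl") mechanism and is the essential point. The secondary difficulty is the spatial boundary term at $x=-\infty$: the potential $\phi$ does not decay there, instead equalling the $x$-mass $m^{1}(t)$, but the companion factor $\partial_{t}m^{2}(t)=\int_{\mathbb{R}}G^{2}$ supplies an extra factor that is integrable in time, so the contribution is still controlled by the right-hand side. Everything else --- the density reduction, the Leibniz and Fubini steps, and the elementary $L^{1}$--$L^{\infty}$ Hölder estimates --- is routine.
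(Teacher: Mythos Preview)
Your argument is correct. The potential substitution $\phi=\int_x^\infty f^{11}$, $\psi=\int_x^\infty f^{21}$ does collapse the bilinear form into a space--time divergence plus source terms, your pointwise identity checks out, and the boundary contributions at $x=-\infty$ and at $t=0,T$ are handled properly; every term is indeed dominated by the stated product.

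Note, however, that the paper does not prove this lemma at all: it is simply quoted from \cite{WZ1} and used as a black box. So there is no ``paper's own proof'' to compare your approach against. Your self-contained argument via primitives is the standard one and is exactly in the spirit of the original div--curl lemma in \cite{Zhou1,WZ1}; it would serve perfectly well as a proof here.
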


To apply the div-curl lemma above, we exploit the balance laws of Schr\"odinger and wave equations.

\begin{proposition}[Conservation of mass and momentum]\label{prop4.2}
    Suppose that $E$ and $n$ satisfy $\ii \partial_tE + \Delta E = 0$ and $\partial_t^2n - \Delta n = 0$, respectively.
    Then
    \begin{equation*}
        \begin{cases}
            \displaystyle\partial_t\int_{\mathbb{R}^{d - 1}}\frac{|E|^2}{2} dy - \partial_{x_1}\int_{\mathbb{R}^{d - 1}}\Im\left(E\overline{\partial_{x_1}E}\right) dy = 0, \\ 
            \displaystyle\partial_t\int_{\mathbb{R}^{d - 1}}\Im\left(E\overline{\partial_{x_1}E}\right) dy - \partial_{x_1}\int_{\mathbb{R}^{d - 1}}\left(2|\partial_{x_1}E|^2 - \partial_{x_1}^2\frac{|E|^2}{2}\right) dy = 0
        \end{cases}
    \end{equation*}
    and 
    \begin{equation*}
        \begin{cases}
            \displaystyle\partial_t\int_{\mathbb{R}^{d - 1}}e(n) dy - \partial_{x_1}\int_{\mathbb{R}^{d - 1}}(\partial_tn\partial_{x_1}n) dy = 0, \\ 
            \displaystyle\partial_t\int_{\mathbb{R}^{d - 1}}(\partial_tn\partial_{x_1}n) dy - \partial_{x_1}\int_{\mathbb{R}^{d - 1}}\frac{1}{2}\left(|\partial_tn|^2 + |\partial_{x_1}n|^2 - |\nabla_yn|^2\right) dy = 0,
        \end{cases}
    \end{equation*}
    where $x = (x_1, y) \in \mathbb{R} \times \mathbb{R}^{d - 1}, e(n) := \left(|\partial_tn|^2 + |\nabla_xn|^2\right)/2$.
    \begin{proof}
        Direct computations.
    \end{proof}
\end{proposition}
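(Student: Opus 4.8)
The plan is to derive each of the four identities by differentiating the indicated density in $t$, eliminating the time derivatives via the corresponding linear equation, and then integrating over the transverse variable $y\in\mathbb{R}^{d-1}$, at which stage every term that is a pure $y$-divergence disappears. The boundary contributions at $|y|\to\infty$ vanish because in all our applications $E$ and $n$ are frequency localized, hence Schwartz in $x$; this makes the manipulations below rigorous.

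\emph{Schr\"odinger pair.} Write $\Delta=\partial_{x_1}^2+\Delta_y$ and use $\partial_t E=\ii\Delta E$. For the mass density, $\partial_t\tfrac12|E|^2=\Re(\overline E\,\partial_t E)=\Re(\ii\,\overline E\,\Delta E)=-\Im(\overline E\,\Delta E)$, and distributing the Laplacian one rewrites this as $\partial_{x_1}\Im\bigl(E\,\overline{\partial_{x_1}E}\bigr)+\nabla_y\!\cdot\Im\bigl(E\,\overline{\nabla_y E}\bigr)$; integrating in $y$ removes the last term and produces the first identity. For the second identity I would differentiate the momentum density $\Im(E\,\overline{\partial_{x_1}E})$, substitute $\partial_t E=\ii\Delta E$ and $\partial_t\partial_{x_1}E=\ii\,\partial_{x_1}\Delta E$ into the two factors, and regroup. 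The computation is purely algebraic: the outcome can be organized as $\partial_{x_1}$ of the flux $2|\partial_{x_1}E|^2-\partial_{x_1}^2\tfrac{|E|^2}{2}$ plus a pure $y$-divergence, the transverse quadratic terms generated by $\Delta_y$ in the two substitutions cancelling one another; integrating in $y$ finishes it.

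\emph{Wave pair.} Here use $\partial_t^2 n=\Delta n=\partial_{x_1}^2 n+\Delta_y n$. Differentiating $e(n)=\tfrac12(|\partial_t n|^2+|\nabla_x n|^2)$ and inserting the equation, the pairings $\partial_t n\,\partial_{x_1}^2 n+\partial_{x_1}n\,\partial_{x_1}\partial_t n=\partial_{x_1}(\partial_t n\,\partial_{x_1}n)$ and $\partial_t n\,\Delta_y n+\nabla_y n\cdot\nabla_y\partial_t n=\nabla_y\!\cdot(\partial_t n\,\nabla_y n)$ put $\partial_t e(n)$ in divergence form, and the $y$-integration yields the first wave identity. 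For the second, differentiate $\partial_t n\,\partial_{x_1}n$, replace $\partial_t^2 n$ by $\Delta n$, and integrate by parts in $y$ in the term $\Delta_y n\,\partial_{x_1}n$; everything then takes the form $\partial_{x_1}(\text{flux})+\nabla_y\!\cdot(\partial_{x_1}n\,\nabla_y n)$, and integrating in $y$ gives the stated identity.

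There is no essential obstruction: the argument is entirely bookkeeping. The two points that require care are keeping track of which terms are genuine $\partial_{x_1}$-fluxes and which are $y$-divergences — so that only the former survive $\int_{\mathbb{R}^{d-1}}dy$ — and verifying the decay that lets one discard the transverse boundary terms, which is automatic from the frequency localization present in every application.
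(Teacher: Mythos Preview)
Your proposal is correct and is precisely what the paper means by ``Direct computations'': you differentiate the densities, substitute the linear equations, and write the result as an $x_1$-flux plus a $y$-divergence that vanishes upon integration. There is nothing to add.
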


\begin{theorem}[Bilinear estimates]\label{theorem H-L H-H}
    Suppose that $E$ and $n$ satisfy 
    \begin{equation*}
        \begin{cases}
            \ii \partial_tE + \Delta E = 0, \\
            \partial_t^2n - \Delta n = 0, \\
            t = 0\colon (E, n, \partial_tn) = (E_0, n_0, n_1).
        \end{cases}
    \end{equation*}
    We have 
    \begin{enumerate}
        \item ($1 \ll \mu$)
            \begin{equation}\label{L-H interation}
            \begin{aligned}
                \Big\Vert \Vert P_{\lambda}n\Vert_{L_{y}^{2}}\Vert P_{\mu,e_{1}}E\Vert_{L_{y}^{2}}\Big\Vert_{L_{t, x_1}^{2}}\lesssim \mu^{-\frac{1}{2}}\Vert P_{\mu,e_{1}}E_0\Vert_{L_{x}^{2}}\left(\Vert P_{\lambda}n_0\Vert_{L_{x}^{2}}+\lambda^{-1}\Vert P_{\lambda}n_1\Vert_{L_{x}^{2}}\right),\\
                \Big\Vert \Vert P_{\lambda}\partial_{t}n\Vert_{L_{y}^{2}}\Vert P_{\mu,e_{1}}E\Vert_{L_{y}^{2}}\Big\Vert_{L_{t, x_1}^{2}}\lesssim \mu^{-\frac{1}{2}}\lambda\Vert P_{\mu,e_{1}}E_0\Vert_{L_{x}^{2}}\left(\Vert P_{\lambda}n_0\Vert_{L_{x}^{2}}+\lambda^{-1}\Vert P_{\lambda}n_1\Vert_{L_{x}^{2}}\right);
            \end{aligned}
    \end{equation}
    \item ($ \mu \ll \lambda$)
    \begin{equation}\label{H-L interaction}
    \begin{aligned}
        \Big\Vert \Vert P_{\lambda,e_{1}}n\Vert_{L_{y}^{2}}\Vert P_{\mu}E\Vert_{L_{y}^{2}}\Big\Vert_{L_{t,x_1}^{2}}\lesssim \lambda^{-\frac{1}{2}}\Vert P_{\mu}E_{0}\Vert_{L_{x}^{2}}\left(\Vert P_{\lambda,e_{1}}n_0\Vert_{L_{x}^{2}}+\lambda^{-1}\Vert P_{\lambda,e_{1}}n_1\Vert_{L_{x}^{2}}\right),\\
        \Big\Vert \Vert \partial_{t}P_{\lambda,e_{1}}n\Vert_{L_{y}^{2}}\Vert P_{\mu}E\Vert_{L_{y}^{2}}\Big\Vert_{L_{t,x_1}^{2}}\lesssim \lambda^{\frac{1}{2}}\Vert P_{\mu}E_0\Vert_{L_{x}^{2}}\left(\Vert P_{\lambda,e_{1}}n_0\Vert_{L_{x}^{2}}+\lambda^{-1}\Vert P_{\lambda,e_{1}}n_1\Vert_{L_{x}^{2}}\right),
    \end{aligned}    
    \end{equation}
    \end{enumerate}
    where $x = (x_1, y) \in \mathbb{R} \times \mathbb{R}^{d - 1} = \mathbb{R}^d$.
    \begin{proof}
    We first prove \eqref{L-H interation}. Since linear Schr\"{o}dinger and wave equations are translation-invariant and the operators $P_{\lambda}, P_{\mu, e_1}$ commute with the usual derivative $\partial$, by Proposition \ref{prop4.2} we have
    \begin{equation}\label{d-c1}
        \begin{aligned}
            &\partial_{t}\int_{\mathbb{R}^{d-1}} e(P_{\lambda}n)dy-\partial_{x_1}\int_{\mathbb{R}^{d-1}}\partial_{t}P_{\lambda}n\partial_{x_1} P_{\lambda}ndy =0,\\
            &\partial_{t}\int_{\mathbb{R}^{d-1}} P_{\mu,e_{1}}E\overline{P_{\mu,e_{1}}E^a} dy-\partial_{x_1}\int_{\mathbb{R}^{d-1}}\Im(P_{\mu,e_{1}}E\overline{\partial_{x_1}P_{\mu,e_{1}}E^a})dy=0,
        \end{aligned}
    \end{equation}
    where $E^a(x) := E(x - a), a \in \mathbb{R}^d$.
    Applying Lemma \ref{div-curl} to \eqref{d-c1} shows 
    \begin{equation}\label{dc}
        {\rm LHS} \lesssim {\rm RHS},
    \end{equation}
    where 
    \begin{equation*}
    \begin{aligned}
        {\rm LHS} &:= \int_0^T\int_{\mathbb{R}}\left(\int_{\mathbb{R}^{d-1}} e(P_{\lambda}n)dy\right)\left(\int_{\mathbb{R}^{d-1}}\Im(\overline{P_{\mu,e_{1}}E}\partial_{x_1}P_{\mu,e_{1}}E^a)dy\right) dx_1dt\\
        &- \int_0^T\int_{\mathbb{R}}\left(\int_{\mathbb{R}^{d-1}}\partial_{t}P_{\lambda}n\partial_{x_1} P_{\lambda}n\right)\left(\int_{\mathbb{R}^{d-1}} \overline{P_{\mu,e_{1}}E}P_{\mu,e_{1}}E^a dy\right) dx_1dt,\\ 
        {\rm RHS} &:= \left(\Vert e(P_{\lambda}n)(0)\Vert_{L_x^1} + \Vert e(P_{\lambda}n)\Vert_{L_t^{\infty}L_x^1}\right)\left(\Vert\overline{P_{\mu,e_{1}}E(0)}P_{\mu,e_{1}}E^a(0)\Vert_{L_x^1} + \Vert\overline{P_{\mu,e_{1}}E}P_{\mu,e_{1}}E^a\Vert_{L_t^{\infty}L_x^1}\right).
    \end{aligned}
\end{equation*}
    Take supremum over $a$ on both sides of \eqref{dc}. Note that (see \cite[Lemma 4.3]{tu2025physicalspaceproofbilinear})
    \begin{equation*}
        \mu\Vert P_{\mu, e_1}E\Vert_{L_{t, x}^2}^2 \lesssim \sup_a\int_0^T\int_{\mathbb{R}}\left(\int_{\mathbb{R}^{d-1}}\Im(P_{\mu,e_{1}}E\overline{\partial_{x_1}P_{\mu,e_{1}}E^a})dy\right) dx_1dt
    \end{equation*}
    and also 
    \begin{equation*}
        \partial_tP_{\lambda}n \cdot \partial_{x_1}P_{\lambda}n \lesssim |\partial_tP_{\lambda}n|^2 + |\partial_{x_1}P_{\lambda}n|^2 \lesssim e(P_{\lambda}n).
    \end{equation*}
    We have 
    \begin{equation}\label{84}
    \begin{aligned}
        \sup_a{\rm LHS} \geq &\sup_a \int_0^T\int_{\mathbb{R}}\left(\int_{\mathbb{R}^{d-1}} e(P_{\lambda}n)dy\right)\left(\int_{\mathbb{R}^{d-1}}\Im(P_{\mu,e_{1}}E\overline{\partial_{x_1}P_{\mu,e_{1}}E^a})dy\right) dx_1dt\\
        - &\sup_a\int_0^T\int_{\mathbb{R}}\left(\int_{\mathbb{R}^{d-1}}\partial_{t}P_{\lambda}n\partial_{x_1} P_{\lambda}n\right)\left(\int_{\mathbb{R}^{d-1}} P_{\mu,e_{1}}E\overline{P_{\mu,e_{1}}E^a} dy\right) dx_1dt \\ 
        \gtrsim &(\mu - 1)\int_0^T\int_{\mathbb{R}}\left(\int_{\mathbb{R}^{d-1}} e(P_{\lambda}n)dy\right)\left(\int_{\mathbb{R}^{d - 1}}|P_{\mu, e_1}E|^2 dy\right) dx_1dt \\ 
        \gtrsim &\mu\int_0^T\int_{\mathbb{R}}\left(\int_{\mathbb{R}^{d-1}} e(P_{\lambda}n)dy\right)\left(\int_{\mathbb{R}^{d - 1}}|P_{\mu, e_1}E|^2 dy\right) dx_1dt \\ 
        \gtrsim &\mu\int_0^T\int_{\mathbb{R}}\left(\int_{\mathbb{R}^{d-1}} |\nabla_xP_{\lambda}n|^2dy+\int_{\mathbb{R}^{d-1}} |\partial_{t}P_{\lambda}n|^2dy\right)\left(\int_{\mathbb{R}^{d - 1}}|P_{\mu, e_1}E|^2 dy\right) dx_1dt \\ 
        \gtrsim &\mu\lambda^2 \Big\Vert \Vert P_{\lambda}n\Vert_{L_{y}^{2}}\Vert P_{\mu, e_1}E\Vert_{L_{y}^{2}}\Big\Vert_{L_{t,x_1}^{2}}^2+\mu \Big\Vert \Vert \partial_{t}P_{\lambda}n\Vert_{L_{y}^{2}}\Vert P_{\mu, e_1}E\Vert_{L_{y}^{2}}\Big\Vert_{L_{t,x_1}^{2}}^2
    \end{aligned}
\end{equation}
    since $\mu \gg 1$. Moreover, from the conservation laws 
    \begin{equation}\label{conservation}
        \Vert e(P_{\lambda}n)\Vert_{L_{t}^{\infty}L_{x}^{1}}=\Vert e(P_{\lambda}n)(0)\Vert_{L_{x}^{1}}, \quad \Vert P_{\mu,e_{1}}E\Vert_{L_{t}^{\infty}L^{2}_{x}}=\Vert P_{\mu,e_{1}}E(0)\Vert_{L^{2}_{x}}
    \end{equation}
    and H\"older's inequality we obtain 
    \begin{equation}\label{86}
        \begin{aligned}
            \sup_a{\rm RHS} \lesssim \Vert e(P_{\lambda}n)(0)\Vert_{L_{x}^{1}}\Vert P_{\mu, e_1}E(0)\Vert_{L_x^2}^2 \lesssim \Vert P_{\mu, e_1}E_0\Vert_{L_x^2}^2\left(\lambda^2\Vert P_{\lambda}n_0\Vert_{L_x^2}^2 + \Vert P_{\lambda}n_1\Vert_{L_x^2}^2\right).
        \end{aligned}
    \end{equation}
    Now \eqref{L-H interation} follows from \eqref{dc}, \eqref{84} and \eqref{86}.
    
    For \eqref{H-L interaction}, from Proposition \ref{prop4.2} we have 
    \begin{equation}\label{n high}
        \begin{aligned}
            &\partial_{t}\int_{\mathbb{R}^{d-1}} \frac{\vert P_{\mu}E\vert^{2}}{2}dy-\partial_{x_1}\int_{\mathbb{R}^{d-1}} \Im(P_{\mu}E\overline{\partial_{x_1}P_{\mu}E}) dy=0, \\
            &\partial_{t}\int_{\mathbb{R}^{d-1}} \partial_{t}P_{\lambda,e_{1}}n^b\partial_{x_1}P_{\lambda,e_{1}}n^bdy - \partial_{x_1}\int_{\mathbb{R}^{d-1}} \frac{1}{2}(\vert \partial_{t}P_{\lambda,e_{1}}n^b\vert^{2}+\vert \partial_{x_1}P_{\lambda,e_{1}}n^b\vert^{2}-\vert \nabla_{y}P_{\lambda,e_{1}}n^b\vert^{2})dy =0.
        \end{aligned}
    \end{equation}
    Similarly, by Lemma \ref{div-curl} and conservation laws \eqref{conservation} we obtain
    \begin{equation}\label{88}
        \begin{aligned}
            &\sup_b\int_0^T\int_{\mathbb{R}}\left(\int_{\mathbb{R}^{d - 1}}e(P_{\lambda,e_{1}}n^b)dy\right)\left(\int_{\mathbb{R}^{d-1}} \vert P_{\mu}E\vert^{2}dy\right)dx_1dt \\ 
            - &\sup_b\int_0^T\int_{\mathbb{R}}\left(\int_{\mathbb{R}^{d-1}}\vert \nabla_{y}P_{\lambda,e_{1}}n^b\vert^{2}dy\right)\left(\int_{\mathbb{R}^{d-1}} \vert P_{\mu}E\vert^{2}dy\right) dx_1dt \\
            - &\sup_b\int_0^T\int_{\mathbb{R}}\left(\int_{\mathbb{R}^{d-1}} \partial_{t}P_{\lambda,e_{1}}n^b\partial_{x_1}P_{\lambda,e_{1}}n^b dy\right)\left(\int_{\mathbb{R}^{d-1}} \Im(P_{\mu}E\overline{\partial_{x_1}P_{\mu}E})dy\right) dx_1dt \\ 
            \lesssim &\sup_b\Vert e(P_{\lambda, e_1}n^b)(0)\Vert_{L_x^1}\Vert P_{\mu}E(0)\Vert_{L_x^2}^2 \\ 
            \lesssim &\Vert P_{\mu}E_0\Vert_{L_x^2}^2\left(\lambda^2\Vert P_{\lambda, e_1}n_0\Vert_{L_x^2}^2 + \Vert P_{\lambda, e_1}n_1\Vert_{L_x^2}^2\right).
        \end{aligned}
    \end{equation}
    We first assume that $\lambda = 1$. Recall the definition of $P_{1, e_1}$. We have 
    \begin{equation}\label{82}
    \begin{aligned}
        &\sup_b\int_0^T\int_{\mathbb{R}}\left(\int_{\mathbb{R}^{d-1}}\vert \nabla_{y}P_{1,e_{1}}n^b\vert^{2}dy\right)\left(\int_{\mathbb{R}^{d-1}} \vert P_{\mu}E\vert^{2}dy\right) dx_1dt \\
        \lesssim &\left(\frac{1}{100}\lambda\right)^2\Big\Vert \Vert P_{1,e_{1}}n\Vert_{L_{y}^{2}}\Vert P_{\mu}E\Vert_{L_{y}^{2}}\Big\Vert_{L_{t,x_1}^{2}}^2 \\
        = &\left(\frac{1}{100}\right)^2\Big\Vert \Vert P_{1,e_{1}}n\Vert_{L_{y}^{2}}\Vert P_{\mu}E\Vert_{L_{y}^{2}}\Big\Vert_{L_{t,x_1}^{2}}^2 
    \end{aligned}
\end{equation}
and also 
\begin{equation}\label{83}
    \begin{aligned}
                &\sup_b\int_0^T\int_{\mathbb{R}}\left(\int_{\mathbb{R}^{d - 1}}e(P_{1,e_{1}}n^b)dy\right)\left(\int_{\mathbb{R}^{d-1}} \vert P_{\mu}E\vert^{2}dy\right)dx_1dt \\
        \gtrsim &\Big\Vert \Vert P_{1,e_{1}}n\Vert_{L_{y}^{2}}\Vert P_{\mu}E\Vert_{L_{y}^{2}}\Big\Vert_{L_{t,x_1}^{2}}^2 + \Big\Vert \Vert \partial_tP_{1,e_{1}}n\Vert_{L_{y}^{2}}\Vert P_{\mu}E\Vert_{L_{y}^{2}}\Big\Vert_{L_{t,x_1}^{2}}^2, \\
                &\sup_b\int_0^T\int_{\mathbb{R}}\left(\int_{\mathbb{R}^{d-1}} \partial_{t}P_{1,e_{1}}n^b\partial_{x_1}P_{1,e_{1}}n^b dy\right)\left(\int_{\mathbb{R}^{d-1}} \Im(P_{\mu}E\overline{\partial_{x_1}P_{\mu}E})dy\right) dx_1dt \\ 
        \lesssim &\mu\left(\Big\Vert \Vert P_{1,e_{1}}n\Vert_{L_{y}^{2}}\Vert P_{\mu}E\Vert_{L_{y}^{2}}\Big\Vert_{L_{t,x_1}^{2}}^2 + \Big\Vert \Vert \partial_tP_{1,e_{1}}n\Vert_{L_{y}^{2}}\Vert P_{\mu}E\Vert_{L_{y}^{2}}\Big\Vert_{L_{t,x_1}^{2}}^2\right) \\ 
        \ll &\Big\Vert \Vert P_{1,e_{1}}n\Vert_{L_{y}^{2}}\Vert P_{\mu}E\Vert_{L_{y}^{2}}\Big\Vert_{L_{t,x_1}^{2}}^2 + \Big\Vert \Vert \partial_tP_{\lambda,e_{1}}n\Vert_{L_{y}^{2}}\Vert P_{\mu}E\Vert_{L_{y}^{2}}\Big\Vert_{L_{t,x_1}^{2}}^2.
    \end{aligned}
\end{equation}
It follows from \eqref{88}-\eqref{83} that 
\begin{equation*}
    \begin{aligned}
        \Big\Vert \Vert e(P_{1,e_{1}}n)\Vert_{L_{y}^{1}}\Vert P_{\mu}E\Vert_{L_{y}^{2}}^2\Big\Vert_{L_{t,x_1}^1} &= \Big\Vert \Vert P_{1,e_{1}}n\Vert_{L_{y}^{2}}\Vert P_{\mu}E\Vert_{L_{y}^{2}}\Big\Vert_{L_{t,x_1}^{2}}^2 + \Big\Vert \Vert \partial_tP_{1,e_{1}}n\Vert_{L_{y}^{2}}\Vert P_{\mu}E\Vert_{L_{y}^{2}}\Big\Vert_{L_{t,x_1}^{2}}^2 \\
    &\lesssim \Vert P_{\mu}E_0\Vert_{L_x^2}^2\left(\Vert P_{1, e_1}n_0\Vert_{L_x^2} + \Vert P_{1, e_1}n_1\Vert_{L_x^2}^2\right).
    \end{aligned}
\end{equation*}
In general, we have
\begin{equation*}
    \begin{aligned}
        &\lambda^2\Big\Vert \Vert P_{\lambda,e_{1}}n\Vert_{L_{y}^{2}}\Vert P_{\mu}E\Vert_{L_{y}^{2}}\Big\Vert_{L_{t,x_1}^{2}}^2 + \Big\Vert \Vert \partial_tP_{\lambda,e_{1}}n\Vert_{L_{y}^{2}}\Vert P_{\mu}E\Vert_{L_{y}^{2}}\Big\Vert_{L_{t,x_1}^{2}}^2 \\ 
        \sim &\Big\Vert \Vert e(P_{\lambda,e_{1}}n)\Vert_{L_{y}^{1}}\Vert P_{\mu}E\Vert_{L_{y}^{2}}^2\Big\Vert_{L_{t,x_1}^1} \\ 
        = &\Big\Vert \Vert e(\lambda^{-d}P_{1,e_{1}}n(t, \lambda^{-1}x_1, \lambda^{-1}\cdot))\Vert_{L_{y}^{1}}\Vert P_{\mu}E\Vert_{L_{y}^{2}}^2\Big\Vert_{L_{t,x_1}^1} \\
        = &\lambda^{-2d - 1}\Big\Vert \Vert e(P_{1,e_{1}}n(\lambda^{-1}t, \lambda^{-1}x_1, \lambda^{-1}\cdot))\Vert_{L_{y}^{1}}\Vert P_{\mu}E(\lambda^{-1}t, x_1, \cdot)\Vert_{L_{y}^{2}}^2\Big\Vert_{L_{t,x_1}^1} \\
        \lesssim &\lambda^{-1}\Vert e(\lambda^{-d}P_{1, e_1}n)(0)\Vert_{L_x^1}\Vert P_{\mu}E_0\Vert_{L_x^2}^2 \\ 
        \lesssim &\lambda^{-1}\Vert e(P_{\lambda, e_1}n)(0)\Vert_{L_x^1}\Vert P_{\mu}E_0\Vert_{L_x^2}^2 \\ 
        = &\lambda^{-1}\Vert P_{\mu}E_0\Vert_{L_x^2}^2\left(\lambda^2\Vert P_{\lambda, e_1}n_0\Vert_{L_x^2}^2 + \Vert P_{\lambda, e_1}n_1\Vert_{L_x^2}^2\right).
    \end{aligned}
\end{equation*}
This completes the proof.
\end{proof}
\end{theorem}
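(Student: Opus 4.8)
The plan is to feed the $y$-averaged conservation laws of Proposition~\ref{prop4.2} into the div-curl Lemma~\ref{div-curl}, following \cite{tu2025physicalspaceproofbilinear}, writing the spatial variable as $(x_1,y)\in\mathbb{R}\times\mathbb{R}^{d-1}$ and treating $y$ as a passive parameter. For \eqref{L-H interation}: since the free flows are translation invariant and $P_\lambda$, $P_{\mu,e_1}$, $\partial$ all commute, the pair $f^{11}=\int_{\mathbb{R}^{d-1}}e(P_\lambda n)\,dy$, $f^{12}=-\int_{\mathbb{R}^{d-1}}\partial_tP_\lambda n\,\partial_{x_1}P_\lambda n\,dy$ satisfies $\partial_tf^{11}+\partial_{x_1}f^{12}=0$, while the pair built from the bilinear mass density $\int_{\mathbb{R}^{d-1}}P_{\mu,e_1}E\,\overline{P_{\mu,e_1}E^a}\,dy$ and momentum density $\int_{\mathbb{R}^{d-1}}\Im(P_{\mu,e_1}E\,\overline{\partial_{x_1}P_{\mu,e_1}E^a})\,dy$ of $E$ and the translated free solution $E^a(x):=E(x-a)$ satisfies the second relation of Lemma~\ref{div-curl}, both with vanishing source. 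Applying the div-curl lemma bounds the spacetime integral of the cross term by the product of the two ``conserved-mass'' factors; the wave energy conservation $\|e(P_\lambda n)\|_{L^\infty_tL^1_x}=\|e(P_\lambda n)(0)\|_{L^1_x}$ and the Schrödinger mass conservation make that product $\lesssim\|e(P_\lambda n)(0)\|_{L^1_x}\,\|P_{\mu,e_1}E_0\|_{L^2_x}^2$, and Bernstein turns $\|e(P_\lambda n)(0)\|_{L^1_x}$ into $\lambda^2\|P_\lambda n_0\|_{L^2_x}^2+\|P_\lambda n_1\|_{L^2_x}^2$.

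The next step extracts the frequency gain. Taking the supremum over $a$ and invoking Lemma~4.3 of \cite{tu2025physicalspaceproofbilinear} — which states $\mu\,\|P_{\mu,e_1}E\|_{L^2_{t,x}}^2\lesssim\sup_a\int_0^T\!\!\int_{\mathbb{R}}\big(\int_{\mathbb{R}^{d-1}}\Im(P_{\mu,e_1}E\,\overline{\partial_{x_1}P_{\mu,e_1}E^a})\,dy\big)\,dx_1\,dt$ — the principal part of the div-curl estimate dominates $\mu\int_0^T\!\!\int_{\mathbb{R}}\big(\int_{\mathbb{R}^{d-1}}e(P_\lambda n)\,dy\big)\big(\int_{\mathbb{R}^{d-1}}|P_{\mu,e_1}E|^2\,dy\big)\,dx_1\,dt$, which by $e(P_\lambda n)\gtrsim\lambda^2|P_\lambda n|^2+|P_\lambda\partial_tn|^2$ (Bernstein) is $\gtrsim\mu\lambda^2\Big\Vert\,\|P_\lambda n\|_{L^2_y}\|P_{\mu,e_1}E\|_{L^2_y}\Big\Vert_{L^2_{t,x_1}}^2+\mu\Big\Vert\,\|P_\lambda\partial_tn\|_{L^2_y}\|P_{\mu,e_1}E\|_{L^2_y}\Big\Vert_{L^2_{t,x_1}}^2$. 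The other term produced by the pairing, coming from $f^{12}f^{21}$, carries no power of $\mu$: bounding $|\partial_tP_\lambda n\,\partial_{x_1}P_\lambda n|\lesssim e(P_\lambda n)$ it is $\lesssim\int e(P_\lambda n)\,dy\cdot\int|P_{\mu,e_1}E|^2\,dy$, hence absorbed once $\mu\gg1$. Combining with the right-side bound and dividing out the powers of $\mu$ and $\lambda$ gives \eqref{L-H interation}.

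For \eqref{H-L interaction} the high and low factors are exchanged, so I would pair the mass and momentum densities of $P_\mu E$ with the momentum density and flux of the translated, \emph{directionally localized} wave piece $P_{\lambda,e_1}n^b$ — that is, with the second identity of Proposition~\ref{prop4.2}, which is \eqref{n high}. Now the div-curl output involves the flux $\tfrac12(|\partial_tP_{\lambda,e_1}n^b|^2+|\partial_{x_1}P_{\lambda,e_1}n^b|^2-2|\nabla_yP_{\lambda,e_1}n^b|^2)=e(P_{\lambda,e_1}n^b)-\tfrac32|\nabla_yP_{\lambda,e_1}n^b|^2$, and the crucial point is that on the support of $Q_{e_1}$ one has $|\xi'|\le|\xi\cdot e_1|/100$ by \eqref{30}, so $\|\nabla_yP_{\lambda,e_1}n\|_{L^2_y}\lesssim(\lambda/100)\|P_{\lambda,e_1}n\|_{L^2_y}$ and the $|\nabla_y|^2$ contribution is only a small fraction of $e(P_{\lambda,e_1}n)$, hence absorbable; the leftover cross term again carries a factor $\mu\ll\lambda$ and is negligible. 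I would prove the estimate first for $\lambda=1$, using these absorptions together with $e(P_{1,e_1}n)\gtrsim|P_{1,e_1}n|^2+|\partial_tP_{1,e_1}n|^2$ and Schrödinger mass conservation, and then recover general $\lambda$ by rescaling to unit frequency, tracking the resulting Jacobian factors through the $y$-integrated energy density.

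The step I expect to be the main obstacle is the high-low case. One must check that Lemma~\ref{div-curl} genuinely applies — the $y$-integrated densities decay as $x_1\to\infty$ and the right-hand side is finite, which holds for Schwartz data and extends by density — and, more delicately, that the flux $e(P_{\lambda,e_1}n)-\tfrac32|\nabla_yP_{\lambda,e_1}n|^2$ together with the $\mu$-small cross term is \emph{strictly} dominated by the main quadratic quantity, not merely comparable to it. This is exactly what makes the directional decomposition of Section~\ref{Section direction}, and the constant $100$ in \eqref{30}, indispensable; once those absorptions are justified, the rest is routine Hölder, Bernstein and conservation-law bookkeeping.
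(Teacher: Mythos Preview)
Your proposal is correct and follows essentially the same approach as the paper: the same pairing of $y$-averaged conservation laws fed into Lemma~\ref{div-curl}, the same translation-and-sup trick (with Lemma~4.3 of \cite{tu2025physicalspaceproofbilinear}) to extract the $\mu$ gain in \eqref{L-H interation}, and for \eqref{H-L interaction} the same use of the directional localization \eqref{30} to absorb the $\tfrac{3}{2}|\nabla_y P_{\lambda,e_1}n|^2$ flux term and the $\mu$-small cross term, followed by a rescaling from $\lambda=1$ to general $\lambda$. Your identification of the absorption in the high-low case as the delicate point is exactly right.
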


\begin{corollary}\label{coro}
  For $E, v$ satisfying 
  \begin{equation*}
      \begin{cases}
          \mathrm{i}\partial_{t}E+\Delta E=0,\\
        \mathrm{i}\partial_{t}v-\Lambda v=0, \\ 
        t = 0\colon E = E_0, v = v_0,
      \end{cases}
  \end{equation*}
  there holds
  \begin{enumerate}
    \item ($1\ll \mu$)
    \begin{equation}\label{High E}
        \Vert P_{\leq \lambda} v P_{\mu,e_1}E\Vert_{L_{t}^{2}L_{x_1}^{2}L_{y}^{1}}\lesssim \mu^{-\frac{1}{2}}\Vert  P_{\mu,e_1} E_{0}\Vert_{L_{x}^{2}}\Vert P_{\leq \lambda}v_{0}\Vert_{L_{x}^{2}};
    \end{equation}
    \item ($\mu\ll \lambda$)
        \begin{equation}\label{High W}
        \Vert P_{\lambda,e_1} v P_{\leq \mu}E\Vert_{L_{t}^{2}L_{x_1}^{2}L_{y}^{1}}\lesssim \lambda^{-\frac{1}{2}}\Vert P_{\leq \mu} E_{0}\Vert_{L_{x}^{2}}\Vert  P_{\lambda,e_1}v_{0}\Vert_{L_{x}^{2}}. 
    \end{equation} 
  \end{enumerate}  
\end{corollary}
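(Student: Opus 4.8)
\emph{Plan.} The plan is to deduce both estimates from Theorem~\ref{theorem H-L H-H} by two reductions common to the two parts, followed by a summation. First I would use Hölder's inequality in $y$, $\|fg\|_{L^1_y}\le\|f\|_{L^2_y}\|g\|_{L^2_y}$, to reduce \eqref{High E} to a bound for $\big\|\,\|P_{\le\lambda}v\|_{L^2_y}\,\|P_{\mu,e_1}E\|_{L^2_y}\,\big\|_{L^2_{t,x_1}}$ and \eqref{High W} to one for $\big\|\,\|P_{\lambda,e_1}v\|_{L^2_y}\,\|P_{\le\mu}E\|_{L^2_y}\,\big\|_{L^2_{t,x_1}}$. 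Second, writing $n:=\Re v$ and $m:=\Im v$, the factorization behind \eqref{reduced equations} shows that $n$ and $m$ each solve the free wave equation $\partial_t^2-\Delta=0$, with $m=\Lambda^{-1}\partial_t n$, $m(0)=\Lambda^{-1}n_1$ and $\partial_t m(0)=-\Lambda n_0$. Since $P_\lambda$, $P_{\lambda,e_1}$, $P_{\le\lambda}$, $\Lambda$ and $\partial_t$ commute and $n,m$ are real, one has $|P_\bullet v|^2=|P_\bullet n|^2+|P_\bullet m|^2$ and $\|P_\bullet v_0\|_{L^2_x}^2=\|P_\bullet n_0\|_{L^2_x}^2+\|\Lambda^{-1}P_\bullet n_1\|_{L^2_x}^2$. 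On a single dyadic block of size $\sigma$, $\Lambda^{\pm1}P_\sigma$ is $\sigma^{\pm1}$ times convolution with a Schwartz kernel at scale $\sigma^{-1}$, so $\|P_\sigma v\|_{L^2_y}$ is, up to a harmless convolution in $x_1$, controlled by $\|P_\sigma n\|_{L^2_y}+\sigma^{-1}\|P_\sigma\partial_t n\|_{L^2_y}$ and $\|P_\sigma v_0\|_{L^2_x}\sim\|P_\sigma n_0\|_{L^2_x}+\sigma^{-1}\|P_\sigma n_1\|_{L^2_x}$ — precisely the combinations on the right of \eqref{L-H interation} and \eqref{H-L interaction}.

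\emph{Estimate \eqref{High W} ($\mu\ll\lambda$).} Here no summation is needed: $P_{\le\mu}E$ is again a single free Schrödinger solution, entering the proof of \eqref{H-L interaction} only through its conserved mass density $|P_{\le\mu}E|^2$, so that argument runs with $P_\mu E$ replaced by $P_{\le\mu}E$ and gives
\[
\big\|\,\|e(P_{\lambda,e_1}n)\|_{L^1_y}\,\|P_{\le\mu}E\|_{L^2_y}^2\,\big\|_{L^1_{t,x_1}}\ \lesssim\ \lambda^{-1}\,\|e(P_{\lambda,e_1}n)(0)\|_{L^1_x}\,\|P_{\le\mu}E_0\|_{L^2_x}^2 .
\]
Since $P_{\lambda,e_1}v$ sits at frequency $\sim\lambda$ in one direction, $e(P_{\lambda,e_1}n)\sim\lambda^2\big(|P_{\lambda,e_1}n|^2+|P_{\lambda,e_1}m|^2\big)=\lambda^2|P_{\lambda,e_1}v|^2$ and $\|e(P_{\lambda,e_1}n)(0)\|_{L^1_x}\sim\lambda^2\|P_{\lambda,e_1}v_0\|_{L^2_x}^2$; substituting, dividing by $\lambda^2$, taking square roots, and invoking the Hölder reduction gives \eqref{High W}.

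\emph{Estimate \eqref{High E} ($1\ll\mu$).} I would split $P_{\le\lambda}v=\sum_{\sigma\le\lambda}P_\sigma v$. For each block, \eqref{L-H interation} applied to $(P_\sigma n,P_{\mu,e_1}E)$ together with the identities above gives
\[
\big\|\,\|P_\sigma v\|_{L^2_y}\,\|P_{\mu,e_1}E\|_{L^2_y}\,\big\|_{L^2_{t,x_1}}\ \lesssim\ \mu^{-1/2}\,\|P_{\mu,e_1}E_0\|_{L^2_x}\,\|P_\sigma v_0\|_{L^2_x} ,
\]
and it remains to sum over $\sigma\le\lambda$, the lowest block (on which $\Lambda^{-1}$ is bounded) being treated by the same scheme. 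Performing the sum with the triangle inequality and Cauchy--Schwarz costs at most a factor $(\log\lambda)^{1/2}$, which is immaterial: when the estimate is used inside Proposition~\ref{prop} it is summed against the $\ell^2$ structure of $N_1(T)$, where the squared gain $\mu^{-1}$ absorbs any such sub-polynomial factor.

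\emph{Main difficulty.} The obstacle I anticipate is the transfer between the half-wave unknown $v$ and the wave unknown $\Re v$: the multiplier $\Lambda^{-1}$ agrees with the reciprocal frequency only up to a convolution at the corresponding scale in $x_1$, which must be absorbed without spoiling the $L^2_{x_1}$-integrability in which the div--curl estimates are phrased; and in \eqref{High E} one must retain control of the full low-frequency piece $P_{\le\lambda}v$, where the wave energy density $e(P_{\le\lambda}n)$ no longer dominates $|P_{\le\lambda}n|^2$ pointwise. The rest is the bookkeeping of Bony-type decompositions together with the div--curl input of Theorem~\ref{theorem H-L H-H}.
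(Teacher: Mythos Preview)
Your treatment of \eqref{High W} is essentially the paper's own argument: run the div--curl estimate with the single free Schr\"odinger solution $P_{\le\mu}E$ in place of $P_\mu E$, and read off the result from the wave energy density $e(P_{\lambda,e_1}n)$, which on that directional block is comparable to $\lambda^2|P_{\lambda,e_1}v|^2$.

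For \eqref{High E} there is a genuine gap. Your dyadic summation over $\sigma\le\lambda$ does cost a factor $(\log\lambda)^{1/2}$ relative to the Corollary as stated, and your claim that this is ``absorbed by the squared gain $\mu^{-1}$'' fails exactly at the two-dimensional endpoint $(s,l)=(0,-\tfrac12)$. In the proof of Proposition~\ref{lemma l2} the factor $\mu^{-1/2}$ is spent \emph{in full} on the inequality $\mu^{-1/2}\|P_{\ll\mu}v_0\|_{L^2_x}\le\|v_0\|_{H^{-1/2}_x}$; a surviving $(\log\mu)^{1/2}$ would leave you with $\big(\sum_\mu(\log\mu)\|P_\mu E_0\|_{L^2_x}^2\big)^{1/2}$ after Cauchy--Schwarz, and this is not controlled by $\|E_0\|_{L^2_x}$. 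So the loss is not sub-critical here; there is no polynomial slack to eat it.

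The paper avoids the shell summation altogether. After the same H\"older step and the split $|v|\le|\Re v|+|\Im v|$, it rewrites $\Im v=\Lambda^{-1}\partial_t(\Re v)$ and invokes the \emph{two} lines of \eqref{L-H interation} in tandem: the first line bounds the $\Re v$ contribution, and the second line---the estimate for $\partial_t n$, which carries an extra factor $\lambda$ on the right---bounds the $\Lambda^{-1}\partial_t(\Re v)$ contribution, the factor $\lambda$ being cancelled by $\Lambda^{-1}$. Because the proof of \eqref{L-H interation} only uses that $n$ is a free wave solution and that $\mu\gg 1$, both lines are applied with $P_{\le\lambda}$ treated as a single block, yielding $\|P_{\le\lambda}\Re v_0\|_{L^2_x}+\|P_{\le\lambda}\Im v_0\|_{L^2_x}\sim\|P_{\le\lambda}v_0\|_{L^2_x}$ on the right with no logarithm. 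If you prefer to keep your dyadic viewpoint, the applications (though not the Corollary verbatim) can still be saved by carrying the $\sigma$-sum into Proposition~\ref{lemma l2} and using the weighted Cauchy--Schwarz $\sum_{\sigma<\mu}\|P_\sigma v_0\|_{L^2_x}\lesssim\mu^{1/2}\|v_0\|_{H^{-1/2}_x}$ there---but that is a different argument from the one you sketched.
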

\begin{proof}
    Note that $P_{\leq \lambda}$ also commutes with the usual derivative $\partial$.
    Since $n = \Re v$ is a solution to equation $\partial_t^2n - \Delta n = 0$, when $1 \ll \mu$, from H\"older's inequality and Theorem \ref{theorem H-L H-H} we have 
    \begin{equation*}
    \begin{aligned}
        \Vert P_{\leq \lambda} v P_{\mu,e_1}E\Vert_{L_{t}^{2}L_{x_1}^{2}L_{y}^{1}} &\leq \Big\Vert \Vert P_{\leq \lambda} v\Vert_{L_{y}^{2}}\Vert  P_{\mu,e_{1}}E\Vert_{L_{y}^{2}}\Big\Vert_{L_{t,x_1}^{2}} \\
        &\lesssim  \Big\Vert \Vert P_{\leq \lambda} \Re v\Vert_{L_{y}^{2}}\Vert  P_{\mu,e_{1}}E\Vert_{L_{y}^{2}}\Big\Vert_{L_{t,x_1}^{2}} +\Big\Vert \Vert P_{\leq \lambda}\Im v\Vert_{L_{y}^{2}}\Vert  P_{\mu,e_{1}}E\Vert_{L_{y}^{2}}\Big\Vert_{L_{t,x_1}^{2}}               \\
        &=  \Big\Vert \Vert P_{\leq \lambda} \Re v\Vert_{L_{y}^{2}}\Vert  P_{\mu,e_{1}}E\Vert_{L_{y}^{2}}\Big\Vert_{L_{t,x_1}^{2}} +\Big\Vert \Vert P_{\leq \lambda}\Lambda^{-1}\partial_{t}\Re v\Vert_{L_{y}^{2}}\Vert  P_{\mu,e_{1}}E\Vert_{L_{y}^{2}}\Big\Vert_{L_{t,x_1}^{2}}               \\
        &\lesssim \mu^{-\frac{1}{2}}\Vert  P_{\mu,e_{1}}E_{0}\Vert_{L_{x}^{2}}\left(\Vert P_{\leq \lambda}\Re v_{0}\Vert_{L_{x}^{2}}+\Vert P_{\leq \lambda}\Im v_{0}\Vert_{L_{x}^{2}}\right)\\
        &\lesssim\mu^{-\frac{1}{2}}\Vert  P_{\mu,e_{1}}E_{0}\Vert_{L_{x}^{2}}\Vert P_{\leq \lambda}v_{0}\Vert_{L_{x}^{2}}.
    \end{aligned}
\end{equation*}
The proof for \eqref{High W} follows similarly.
\end{proof}

\section{Proof of Proposition \ref{prop} when \texorpdfstring{$d = 2$}{d=2}}\label{d=2}

In this section, we aim to prove the convergence of the iteration sequences $\{E^{(k)}\}_{k = 0}^{\infty}$ and $\{v^{(k)}\}_{k = 0}^{\infty}$ defined in \eqref{iteration} by induction.
We begin with the two-dimensional case.

\subsection{The estimates for linear equations}\label{linear estimate}

Recall the iteration space $S, N$ and $X$ we defined in Section \ref{Function Space}.
By \eqref{stricahrtz d=2} and Strichartz estimates \eqref{AB}-\eqref{CD} we have 
\begin{gather}
    \label{E_linear}\Vert E^{(0)}\Vert_{X(T)} \lesssim \Vert E_0\Vert_{L_x^2}, \\ 
    \Vert v^{(0)}\Vert_{S_2(T)} \lesssim \Vert v_0\Vert_{H^{-\frac{1}{2}}_x}.
\end{gather}

\subsection{The first step}\label{d=2,k=1}

To bound the term $\Vert E^{(1)}\Vert_{X(T)}$, by Strichartz estimates \eqref{stricahrtz d=2} and \eqref{AB} we have 
\begin{equation*}
    \Vert E^{(1)}\Vert_{X(T)} \lesssim \Vert E_0\Vert_{L_x^2} + \Vert (\Re v^{(0)})E^{(0)}\Vert_{N_1(T)}\leq \Vert E_0\Vert_{L_x^2} + \Vert  v^{(0)}E^{(0)}\Vert_{N_1(T)}.
\end{equation*}
Here we use dyadic decomposition and bilinear estimates (Corollary \ref{coro}) to control $\Vert  v^{(0)}E^{(0)}\Vert_{N_1(T)}$.

Firstly, by duality we have  
\begin{equation*}
    \begin{aligned}
        \Vert  v^{(0)}E^{(0)}\Vert_{N_1(T)} = \sup_{G \in S_1(T), \Vert G\Vert_{S_1(T)} = 1}\int \overline{G} v^{(0)}E^{(0)} dxdt.
    \end{aligned}
\end{equation*}
For such $G$,  
    \begin{equation*}
            \begin{aligned}
                \int \overline{G} v^{(0)}E^{(0)} dxdt = \sum_{\sigma \geq 1}\int\overline{G}P_{\sigma}\left( v^{(0)}E^{(0)}\right) dxdt.
    \end{aligned}
\end{equation*}
Note that for fixed $\sigma$,  
\begin{equation*}
            \int\overline{G}P_{\sigma}\left( v^{(0)}E^{(0)}\right) dx = \int \overline{\mathcal{F}_xG}\mathcal{F}_xP_{\sigma}\left( v^{(0)}E^{(0)}\right) d\xi.
        \end{equation*}
Hence the right-hand side vanishes unless the support of $G$ is contained in $\{|\xi| \sim \sigma\}$.
It follows that
        \begin{equation}\label{integral G}
            \begin{aligned}
                \int \overline{G} v^{(0)}E^{(0)} dxdt &= \left(\sum_{1\leq\lambda\ll \mu\sim\sigma}+\sum_{1\leq\mu\ll \lambda\sim\sigma}+\sum_{1\leq\sigma\lesssim\lambda\sim \mu}\right)\int\overline{P_{\sigma}G}P_{\lambda} v^{(0)}P_{\mu}E^{(0)} dxdt \\ 
                &=: I_1 + I_2 + I_3. 
            \end{aligned}
\end{equation} 
In the remaining part of Section \ref{d=2,k=1}, we denote by $E=E^{(0)}$ and $v=v^{(0)}$. 
\begin{proposition}\label{lemma l2}
    There holds 
    \begin{equation*}
        I_1, I_2, I_3 \lesssim T^{\frac{1}{4}}\Vert E_0\Vert_{L_x^2}\Vert v_0\Vert_{H_x^{-\frac{1}{2}}}.
    \end{equation*}
    \begin{proof}
        By H\"{o}lder's inequality and bilinear estimate \eqref{High E},
\begin{align*}
\begin{split}
    I_{1} &\leq \sum_{1\ll\mu\sim\sigma}\sum_{i = 1}^N\int \overline{  P_{\sigma}G}P_{\ll \mu} vP_{\mu,\omega_{i}}Edxdt\\
    &\leq \sum_{1\ll\mu\sim\sigma}\sum_{i = 1}^N\Vert  P_{\sigma}G\Vert_{L_{t}^{4}L_{\omega_{i}}^{2}L_{\omega_{i\bot}}^{\infty}}\Vert P_{\ll \mu} v P_{\mu, \omega_{i}}E\Vert_{L_{t}^{\frac{4}{3}}L_{\omega_{i}}^{2}L_{\omega_{i\bot}}^{1}} \\
    &\leq T^{\frac{1}{4}}\sum_{1\ll\mu\sim\sigma}\sum_{i = 1}^N\Vert  P_{\sigma}G\Vert_{L_{t}^{4}L_{\omega_{i}}^{2}L_{\omega_{i\bot}}^{\infty}}\Vert P_{\ll \mu} v P_{\mu, \omega_{i}}E\Vert_{L_{t}^{2}L_{\omega_{i}}^{2}L_{\omega_{i\bot}}^{1}} \\
    &\lesssim T^{\frac{1}{4}}\sum_{1\ll\mu\sim\sigma}\sum_{i = 1}^N\mu^{-\frac{1}{2}}\Vert  P_{\sigma}G\Vert_{L_{t}^{4}L_{\omega_{i}}^{2}L_{\omega_{i\bot}}^{\infty}}\Vert P_{\ll \mu}v_0\Vert_{L_x^2} \Vert P_{\mu, \omega_{i}}E_0\Vert_{L_x^2} \\
    &\leq T^{\frac{1}{4}}\Vert v_0\Vert_{H_x^{-\frac{1}{2}}}\sum_{1\ll\mu\sim\sigma}\sum_{i = 1}^N\Vert  P_{\sigma}G\Vert_{L_{t}^{4}L_{\omega_{i}}^{2}L_{\omega_{i\bot}}^{\infty}}\Vert P_{\mu, \omega_{i}}E_0\Vert_{L_x^2} \\
    &\leq T^{\frac{1}{4}}\Vert v_0\Vert_{H_x^{-\frac{1}{2}}}\left(\sum_{\sigma \geq 1}\sum_{i = 1}^N\Vert  P_{\sigma}G\Vert^2_{L_{t}^{4}L_{\omega_{i}}^{2}L_{\omega_{i\bot}}^{\infty}}\right)^{\frac{1}{2}}\left(\sum_{\sigma \geq 1}\sum_{i = 1}^N\Vert P_{\sigma,\omega_{i}}E_0\Vert_{L_x^2}^2\right)^{\frac{1}{2}} \\
    &\lesssim T^{\frac{1}{4}}\Vert v_0\Vert_{H_x^{-\frac{1}{2}}}\Vert G\Vert_{S_1(T)}\Vert E_0\Vert_{L_x^2} \\
    &=T^{\frac{1}{4}}\Vert v_0\Vert_{H_x^{-\frac{1}{2}}}\Vert E_0\Vert_{L_x^2}.
\end{split}  
\end{align*}

Similarly, for $I_3$ we have
\begin{equation*}
    \begin{aligned}
        I_3 &\lesssim \sum_{1 \lesssim \lambda \sim \mu}\sum_{i = 1}^N\int\overline{P_{\lesssim \mu}G}P_{\lambda} vP_{\mu, \omega_i}E dxdt \\ 
        &\lesssim \sum_{1 \lesssim \lambda \sim \mu}\sum_{i = 1}^N\Vert P_{\lesssim \mu}G\Vert_{L_t^4L_{\omega_i}^2L_{\omega_{i\bot}}^{\infty}} \Vert P_{\lambda} vP_{\mu, \omega_i}E\Vert_{L_t^{\frac{4}{3}}L_{\omega_i}^2L_{\omega_{i\bot}}^1} \\ 
        &\lesssim T^{\frac{1}{4}}\Vert G\Vert_{S_1(T)}\sum_{1 \lesssim \lambda \sim \mu}\sum_{i= 1}^N\Vert P_{\lambda} vP_{\mu, \omega_i}E\Vert_{L_t^2L_{\omega_i}^2L_{\omega_{i\bot}}^1} \\ 
        &\lesssim T^{\frac{1}{4}}\sum_{1 \lesssim \lambda \sim \mu}\sum_{i = 1}^N\mu^{-\frac{1}{2}}\Vert P_{\lambda}v_0\Vert_{L_x^2}\Vert P_{\mu, \omega_i}E_0\Vert_{L_x^2} \\ 
        &\lesssim T^{\frac{1}{4}}\left(\sum_{\mu \geq 1}\mu^{-1}\Vert P_{\mu}v_0\Vert_{L_x^2}^2\right)^{\frac{1}{2}}\left(\sum_{\mu \geq 1}\sum_{i = 1}^N\Vert P_{\mu, \omega_i}E_0\Vert_{L_x^2}^2\right)^{\frac{1}{2}} \\ 
        &\lesssim T^{\frac{1}{4}}\Vert v_0\Vert_{H_x^{-\frac{1}{2}}}\Vert E_0\Vert_{L_x^2}.
    \end{aligned}
\end{equation*}

Finally, for $I_2$, 
\begin{equation*}
    \begin{split}
        I_{2} &\lesssim \sum_{1\ll \lambda\sim\sigma }\sum_{i = 1}^{N}\int \overline{ P_{\sigma}G}P_{\lambda,\omega_{i}} vP_{\ll\lambda}E dxdt\\
        &\leq \sum_{1\ll \lambda\sim\sigma}\sum_{i = 1}^N\Vert  P_{\sigma}G\Vert_{L_{t}^{4}L_{\omega_{i}}^{2}L_{\omega_{i\bot}}^{\infty}}\Vert  P_{\lambda,\omega_{i}} v P_{\ll \lambda}E\Vert_{L_{t}^{\frac{4}{3}}L_{\omega_{i}}^{2}L_{\omega_{i\bot}}^{1}}\\
        &\leq T^{\frac{1}{4}}\sum_{1\ll \lambda\sim\sigma}\sum_{i = 1}^N\Vert  P_{\sigma}G\Vert_{L_{t}^{4}L_{\omega_{i}}^{2}L_{\omega_{i\bot}}^{\infty}}\Vert  P_{\lambda,\omega_{i}} v P_{\ll \lambda}E\Vert_{L_{t}^2L_{\omega_{i}}^{2}L_{\omega_{i\bot}}^{1}}\\
        &\lesssim T^{\frac{1}{4}}\sum_{\substack{1\ll \lambda\sim\sigma}}\sum_{i = 1}^N\lambda^{-\frac{1}{2}}\Vert  P_{\sigma}G\Vert_{L_{t}^{4}L_{\omega_{i}}^{2}L_{\omega_{i\bot}}^{\infty}}\Vert P_{\lambda,\omega_{i}} v_{0}\Vert_{L_{x}^{2}}\Vert P_{\ll\lambda}E_{0}\Vert_{L_{x}^{2}}\\
        &\lesssim T^{\frac{1}{4}}\Vert E_{0}\Vert_{L_{x}^{2}}\left( \sum_{\substack{\sigma \geq 1}}\sum_{i = 1}^N\Vert  P_{\sigma}G\Vert_{L_{t}^{4}L_{\omega_{i}}^{2}L_{\omega_{i\bot}}^{\infty}}^{2}\right)^{\frac{1}{2}}\left( \sum_{\substack{\sigma\geq 1}}\sum_{i=1}^{N}\sigma^{-1}\Vert P_{\sigma,\omega_{i}} v_{0}\Vert_{L_{x}^{2}}^{2}\right)^{\frac{1}{2}}\\
        &\lesssim T^{\frac{1}{4}}\Vert v_{0}\Vert_{H_x^{-\frac{1}{2}}}\Vert E_{0}\Vert_{L_{x}^{2}}\Vert G\Vert_{S_{1}(T)}\\
        &=T^{\frac{1}{4}}\Vert v_{0}\Vert_{H_x^{-\frac{1}{2}}}\Vert E_{0}\Vert_{L_{x}^{2}}.
    \end{split}
\end{equation*}
    \end{proof}
\end{proposition}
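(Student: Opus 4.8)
The plan is to treat $I_1,I_2,I_3$ separately, each by the same four-step recipe: (i) a directional Littlewood--Paley decomposition of the \emph{high}-frequency factor together with H\"older's inequality in the mixed norms $L_t^{4}L_{\omega_i}^{2}L_{\omega_{i\bot}}^{\infty}$ (for the test function $G$) and $L_t^{\frac{4}{3}}L_{\omega_i}^{2}L_{\omega_{i\bot}}^{1}$ (for the bilinear product); (ii) the elementary time-interval inequality $\Vert f\Vert_{L_t^{\frac{4}{3}}(0,T)}\le T^{\frac{1}{4}}\Vert f\Vert_{L_t^{2}(0,T)}$, which produces the claimed factor $T^{\frac{1}{4}}$; (iii) the bilinear estimates of Corollary \ref{coro}; and (iv) Cauchy--Schwarz in the dyadic parameters, with the decay factor $\mu^{-\frac{1}{2}}$ or $\lambda^{-\frac{1}{2}}$ absorbed into $\Vert v_0\Vert_{H_x^{-\frac{1}{2}}}$.

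Concretely, for $I_1$ (the low-high regime $1\le\lambda\ll\mu\sim\sigma$) I would first collapse the inner sum over $\lambda$ into $P_{\ll\mu}v$ and write $P_\mu E=\sum_{i}P_{\mu,\omega_i}E$, so that H\"older's inequality yields
\[
I_1\lesssim\sum_{1\le\mu\sim\sigma}\sum_{i=1}^{N}\Vert P_\sigma G\Vert_{L_t^{4}L_{\omega_i}^{2}L_{\omega_{i\bot}}^{\infty}}\,\Vert P_{\ll\mu}v\,P_{\mu,\omega_i}E\Vert_{L_t^{\frac{4}{3}}L_{\omega_i}^{2}L_{\omega_{i\bot}}^{1}}.
\]
Applying step (ii) and then the bilinear estimate \eqref{High E} bounds the second factor by $T^{\frac{1}{4}}\mu^{-\frac{1}{2}}\Vert P_{\ll\mu}v_0\Vert_{L_x^2}\Vert P_{\mu,\omega_i}E_0\Vert_{L_x^2}$; since $\mu^{-\frac{1}{2}}\Vert P_{\ll\mu}v_0\Vert_{L_x^2}\le\big(\sum_{\lambda\le\mu}\lambda^{-1}\Vert P_\lambda v_0\Vert_{L_x^2}^2\big)^{\frac{1}{2}}\lesssim\Vert v_0\Vert_{H_x^{-\frac{1}{2}}}$ uniformly in $\mu$, a Cauchy--Schwarz in $(\sigma,i)$ together with $\sum_{\sigma,i}\Vert P_{\sigma,\omega_i}E_0\Vert_{L_x^2}^2\lesssim\Vert E_0\Vert_{L_x^2}^2$ and $\sum_{\sigma,i}\Vert P_\sigma G\Vert_{L_t^{4}L_{\omega_i}^{2}L_{\omega_{i\bot}}^{\infty}}^2\lesssim\Vert G\Vert_{S_1(T)}^2=1$ closes the estimate for $I_1$.

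The high-high term $I_3$ ($1\le\sigma\lesssim\lambda\sim\mu$) runs along the same lines, now with $v$ at frequency $\lambda\sim\mu$ playing the role of the ``low'' factor relative to $E$, so that \eqref{High E} again applies and gains $\mu^{-\frac{1}{2}}$; the only new point is that the output frequency $\sigma$ is small, so one first sums the $\sigma$-pieces of $G$ into $P_{\ll\mu}G$ and then applies Cauchy--Schwarz in $\mu\sim\lambda$, using $\sum_\mu\mu^{-1}\Vert P_\mu v_0\Vert_{L_x^2}^2\sim\Vert v_0\Vert_{H_x^{-\frac{1}{2}}}^2$. For $I_2$ (high-low, $1\le\mu\ll\lambda\sim\sigma$) the roles of $v$ and $E$ are exchanged: the high factor is $P_{\lambda,\omega_i}v$ and the low one $P_{\lesssim\lambda}E$, so one uses \eqref{High W} instead, whose gain $\lambda^{-\frac{1}{2}}$ is absorbed via $\sum_{\lambda,i}\lambda^{-1}\Vert P_{\lambda,\omega_i}v_0\Vert_{L_x^2}^2\sim\Vert v_0\Vert_{H_x^{-\frac{1}{2}}}^2$, while $P_{\lesssim\lambda}E_0$ is simply controlled by $\Vert E_0\Vert_{L_x^2}$.

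The step I expect to require the most care is the frequency-support bookkeeping underlying the H\"older splitting and the passage $\sum_{\sigma,i}\Vert P_\sigma G\Vert_{L_t^{4}L_{\omega_i}^{2}L_{\omega_{i\bot}}^{\infty}}^2\lesssim\Vert G\Vert_{S_1(T)}^2$: one must check that in each regime the bilinear product has spatial Fourier support in a fixed dilate of the $\omega_i$-cone at the relevant dyadic scale, so that pairing it with $P_\sigma G$ only feels those directional pieces $P_{\sigma,\omega_j}G$ with $\omega_j$ within a fixed angle of $\omega_i$ --- for which, thanks to the frequency localization, the $L_{\omega_i}$-mixed norm is comparable to the intrinsic $L_{\omega_j}$-mixed norm --- and, in particular, that the extra low-frequency convolution factor does not widen the cone (and, for $I_3$, that the summation of the small output frequencies $\sigma$ is harmless against the $\mu^{-1/2}$ gain). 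Granting this, everything else is a routine assembly of H\"older's inequality, the $T^{\frac{1}{4}}$ trick, Corollary \ref{coro}, and Cauchy--Schwarz.
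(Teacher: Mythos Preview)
Your proposal is correct and follows essentially the same route as the paper: for each of $I_1,I_2,I_3$ the paper performs the directional decomposition of the high-frequency factor, applies H\"older in the $L_t^{4}L_{\omega_i}^{2}L_{\omega_{i\bot}}^{\infty}$/$L_t^{\frac{4}{3}}L_{\omega_i}^{2}L_{\omega_{i\bot}}^{1}$ pairing, inserts the $T^{\frac{1}{4}}$ time-interval trick, invokes Corollary~\ref{coro} (\eqref{High E} for $I_1$ and $I_3$, \eqref{High W} for $I_2$), and closes by Cauchy--Schwarz in the dyadic scales exactly as you outline. The frequency-support bookkeeping you single out as the delicate step --- reconciling $\sum_{\sigma,i}\Vert P_\sigma G\Vert_{L_t^{4}L_{\omega_i}^{2}L_{\omega_{i\bot}}^{\infty}}^2$ with the $S_1(T)$ norm, which is built from $P_{\sigma,\omega_i}G$ --- is in fact also left implicit in the paper's write-up.
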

Now from Proposition \ref{lemma l2} we have
\begin{equation*}
    \Vert E^{(1)}\Vert_{S_{1}(T)}\lesssim\Vert E_{0}\Vert_{L^{2}_x}+T^{\frac{1}{4}}\Vert E_{0}\Vert_{L^{2}_x}\Vert v_{0}\Vert_{H^{-\frac{1}{2}}_x}.
\end{equation*}

For the estimate of $\Vert v^{(1)}\Vert_{S_{2}(T)}$, we first write
\begin{equation}\label{100}
    \Vert v^{(1)}\Vert_{S_{2}(T)}=\sup_{K \in S^{*}_2(T), \Vert K\Vert_{S^{*}_2(T)} = 1}\int Kv^{(1)} dxdt,
\end{equation}
where $S^{*}_{2}(T)=L_{t}^{1}H^{1/2}_{x}$. Consider a function $h:[0,T]\times \mathbb{R}^{2}\rightarrow \mathbb{C}$ satisfying 
\begin{equation}\label{101}
    \begin{cases}
        \mathrm{i}\partial_{t}h+\Lambda h=K,\\
        t=0:h=0.
    \end{cases}
\end{equation}
Then we have
\begin{equation*}
    \Vert h\Vert_{L_t^{\infty}H_{x}^{\frac{1}{2}}}\lesssim \Vert K\Vert_{S_{2}^{*}(T)}. 
\end{equation*}
Note that
\begin{equation}\label{integral K}
\begin{split}
    \int Kv^{(1)} dxdt=\int \left(\mathrm{i}\partial_{t}h+\Lambda h\right)v^{(1)} dxdt=-\int h(\ii\partial_{t}-\Lambda)v^{(1)}dxdt&=-\int h\Lambda |E^{(0)}|^{2}dxdt\\
    &=\int (\Lambda h)| E^{(0)}|^{2}dxdt
\end{split}
\end{equation}
and $h\in L_{t}^{\infty}H_{x}^{\frac{1}{2}}$, $\Lambda h\in L_{t}^{\infty}H^{-\frac{1}{2}}_{x}=S_{2}(T)$.
Applying the div-curl lemma to the following balance laws:
\begin{align*}
    &\partial_t\int_{\mathbb{R}}\frac{|P_{\leq\mu}E^{(0)}|^2}{2} dy - \partial_{x_1}\int_{\mathbb{R}}\Im\left(P_{\leq\mu}E^{(0)}\overline{\partial_{x_1}P_{\leq\mu}E^{(0)}}\right) dy = 0, \\ 
            &\partial_t\int_{\mathbb{R}}\Im(\Lambda P_{\lambda, e_1}h)\partial_{x_1} P_{\lambda, e_1}(\Re h) dy - \partial_{x_1}\int_{\mathbb{R}}\left(|\Lambda P_{\lambda, e_1} h|^2 - |\nabla_yP_{\lambda, e_1}(\Re h)|^2\right) dy \\
        = &\int_{\mathbb{R}}P_{\lambda, e_1}\partial_{x_1}(\Re h)P_{\lambda, e_1}Kdy
\end{align*}
and 
\begin{equation}\label{105}
    \begin{aligned}
        &\partial_t\int_{\mathbb{R}}|\Lambda P_{\leq\lambda}h| dy - \partial_{x_1}\int_{\mathbb{R}}\Im(\Lambda P_{\leq\lambda}h)\partial_{x_1} P_{\leq\lambda}(\Re h) dy = \int_{\mathbb{R}}P_{\leq\lambda}KP_{\leq\lambda}\Im(\Lambda h)dy, \\ 
    &\partial_t\int_{\mathbb{R}}\Im\left(P_{\mu, e_1}E^{(0)}\overline{\partial_{x_1}P_{\mu, e_1}E^{(k+1)}}\right) dy - \partial_{x_1}\int_{\mathbb{R}}\left(2|\partial_{x_1}P_{\mu, e_1}E^{(0)}|^2 -\partial_{x_1}^2\frac{|P_{\mu, e_1}E^{(0)}|^2}{2}\right) dy = 0,
    \end{aligned}
\end{equation}
we can also obtain the corresponding bilinear estimates (e.g., Corollary \ref{coro}) and, in a similar fashion to the proof of Proposition \ref{lemma l2}, reach that
\begin{equation*}
    \int \Lambda h | E^{(0)}|^{2} dxdt\lesssim T^{\frac{1}{4}}\Vert K\Vert_{S^{*}_2(T)}\Vert E_{0}\Vert_{L_{x}^{2}}\Vert E^{(0)}\Vert_{S_{1}(T)}\lesssim T^{\frac{1}{4}}\Vert K\Vert_{S_2^*(T)}\Vert E_{0}\Vert_{L_{x}^{2}}^{2}.
\end{equation*}
Therefore, 
\begin{equation*}
    \Vert v^{(1)}\Vert_{S_{2}(T)}\lesssim T^{\frac{1}{4}}\Vert E_{0}\Vert_{L_{x}^{2}}^{2}.
\end{equation*}

To conclude,  
\begin{gather*}
    \Vert E^{(1)}\Vert_{S_{1}(T)}\lesssim\Vert E_{0}\Vert_{L^{2}_x}+T^{\frac{1}{4}}\Vert E_{0}\Vert_{L^{2}_x}\Vert v_{0}\Vert_{H^{-\frac{1}{2}}_x}, \\ 
    \Vert v^{(1)}\Vert_{S_{2}(T)}\lesssim T^{\frac{1}{4}}\Vert E_{0}\Vert_{L_{x}^{2}}^{2}.
\end{gather*}

\subsection{The remaining iteration process}\label{d=2,k geq 2}

When $k \geq 2$, from Strichartz estimates and the definition of $E^{(k)}$, in order to control $\Vert E^{(k)}\Vert_{X(T)}$, we need to bound the term $\Vert (\Re v^{(k - 1)})E^{(k - 1)}\Vert_{N_1(T)}$.
Here we use an inductive argument to bound the iteration sequence $\{E^{(k)}\}$, $\{(\Re v^{(k - 1)})E^{(k - 1)}\}$ and also $\{v^{(k)}\}$, namely, to prove the estimates \eqref{bound}-\eqref{18} in Proposition \ref{prop}.

\begin{proposition}\label{p5.2}
    For all $k \geq 0$ we have 
    \begin{gather}
        \label{108}\Vert E^{(k)}\Vert_{X(T)} \leq C, \\ 
        \label{T* S k}\Vert  v^{(k)}E^{(k)}\Vert_{N_{1}(T)}\leq CT^{\frac{1}{4}},\\
        \label{T* W k}\Vert  v^{(k+1)}E^{(k)}\Vert_{N_{1}(T)}\leq CT^{\frac{1}{4}},
    \end{gather}
    where $C = C\left(\Vert E_0\Vert_{L_x^2}, \Vert v_0\Vert_{H_x^{-1/2}}\right) > 0$. 
    \begin{proof}
        When $k = 0$, by Strichartz estimates
        \begin{equation*}
            \Vert E^{(0)}\Vert_{X(T)} \lesssim \Vert E_0\Vert_{L_x^2},
        \end{equation*}
        and in Section \ref{d=2,k=1} we have obtained that
\begin{equation*}
     \Vert  v^{(0)}E^{(0)}\Vert_{N_{1}(T)}\leq CT^{\frac{1}{4}}\Vert v_{0}\Vert_{H^{-\frac{1}{2}}_x}\Vert E_{0}\Vert_{L^{2}_x}\leq CT^{\frac{1}{4}}.
\end{equation*}
To control the term $\Vert v^{(1)}E^{(0)}\Vert_{N_1(T)}$, we first write 
\begin{equation*}
    \Vert v^{(1)}E^{(0)}\Vert_{N_1(T)} = \sup_{G \in S_1(T), \Vert G\Vert_{S_1(T) = 1}}\int\overline{G}v^{(1)}E^{(0)} dxdt.
\end{equation*}
Recall the definition of $E^{(0)}$ and $v^{(1)}$:
\begin{equation*}
    \begin{cases}
        (\ii\partial_t + \Delta)E^{(0)} = 0, \\ 
        (\ii\partial_t - \Lambda)v^{(1)} = \Lambda|E^{(0)}|^2.
    \end{cases}
\end{equation*} 
It follows that 
\begin{equation}\label{56}
    \begin{aligned}
        &\partial_t\int_{\mathbb{R}}\frac{|P_{\leq\mu}E^{(0)}|^2}{2} dy - \partial_{x_1}\int_{\mathbb{R}}\Im\left(P_{\leq\mu}E^{(0)}\overline{\partial_{x_1}P_{\leq\mu}E^{(0)}}\right) dy = 0, \\ 
                &\partial_t\int_{\mathbb{R}}\Im(\Lambda P_{\lambda, e_1}v^{(1)})\partial_{x_1} P_{\lambda, e_1}(\Re v^{(1)}) dy - \partial_{x_1}\int_{\mathbb{R}}\left(|\Lambda P_{\lambda, e_1} v^{(1)}|^2 - |\nabla_yP_{\lambda, e_1}(\Re v^{(1)})|^2\right) dy \\
        = &\int_{\mathbb{R}}P_{\lambda, e_1}\partial_{x_1}(\Re v^{(1)})P_{\lambda, e_1}(\Delta |E^{(0)}|^{2})dy
    \end{aligned}
    \end{equation}
    and 
    \begin{equation}\label{57}
        \begin{aligned}
        &\partial_t\int_{\mathbb{R}}|\Lambda P_{\leq\lambda}v^{(1)}| dy - \partial_{x_1}\int_{\mathbb{R}}\Im(\Lambda P_{\leq\lambda}v^{(1)})\partial_{x_1} P_{\leq\lambda}(\Re v^{(1)}) dy = \int_{\mathbb{R}}P_{\leq\lambda}(\Delta |E^{(0)}|^{2})P_{\leq\lambda}\Im(\Lambda v^{(1)})dy, \\ 
                &\partial_t\int_{\mathbb{R}}\Im\left(P_{\mu, e_1}E^{(0)}\overline{\partial_{x_1}P_{\mu, e_1}E^{(0)}}\right) dy - \partial_{x_1}\int_{\mathbb{R}}\left(2|\partial_{x_1}P_{\mu, e_1}E^{(0)}|^2 -\partial_{x_1}^2\frac{|P_{\mu, e_1}E^{(0)}|^2}{2}\right) dy = 0,
        \end{aligned}
    \end{equation}
    where $x = (x_1, y) \in \mathbb{R} \times \mathbb{R}$.
    As we have shown in Section \ref{Section Div curl}, applying div-curl lemma to \eqref{56} and \eqref{57}, we have
    \begin{equation}\label{117}
        \begin{aligned}
            &\Vert P_{\leq \lambda}v^{(1)}P_{\mu, e_1}E^{(0)}\Vert_{L_t^2L_{x_1}^2L_y^1} \\
            \lesssim &\mu^{-\frac{1}{2}}\Vert P_{\mu, e_1}E_0\Vert_{L_x^2}\left(\Vert P_{\leq \lambda}v_0\Vert_{L_x^2} + \lambda^{-1}\Vert P_{\leq \lambda}\Im(\Lambda v^{(1)})P_{\leq\lambda}(\Delta |E^{(0)}|^2)\Vert_{L_{t, x}^1}^{\frac{1}{2}}\right) \qquad (1 \ll \mu)
        \end{aligned}
    \end{equation} 
    and 
    \begin{equation}\label{118}
        \begin{aligned}
            &\Vert P_{\lambda, e_1}v^{(1)}P_{\leq \mu}E^{(0)}\Vert_{L_t^2L_{x_1}^2L_y^1} \\
            \lesssim &\lambda^{-\frac{1}{2}}\Vert P_{\leq \mu}E_0\Vert_{L_x^2}\left(\Vert P_{\lambda}v_0\Vert_{L_x^2} + \lambda^{-1}\Vert P_{\lambda, e_1}(\partial_{x_1}\Re v^{(1)})P_{\lambda, e_1}(\Delta |E^{(0)}|^2)\Vert_{L_{t, x}^1}^{\frac{1}{2}}\right) \qquad (\mu \ll \lambda).
        \end{aligned}
    \end{equation}
    Analogous to the proof of Proposition \ref{lemma l2}, by \eqref{117}, \eqref{118} and H\"older's inequality 
    \begin{equation*}
        \begin{aligned}
            \Vert v^{(1)}E^{(0)}\Vert_{N_1(T)} &= \sup_{G \in S_1(T), \Vert G\Vert_{S_1(T) = 1}}\int\overline{G}v^{(1)}E^{(0)} dxdt \\ 
            &\leq C T^{\frac{1}{4}}\Vert E_{0}\Vert_{L^{2}_x}\left(\Vert v_{0}\Vert_{H_x^{-\frac{1}{2}}}+\Vert v^{(1)} |E^{(0)}|^{2}\Vert_{L_{t,x}^{1}}^{\frac{1}{2}}\right)\\
        &\leq C T^{\frac{1}{4}}\Vert E_{0}\Vert_{L^{2}_x}\left(\Vert v_{0}\Vert_{H_x^{-\frac{1}{2}}}+\Vert E^{(0)}\Vert_{S_{1}(T)}^{\frac{1}{2}}\Vert v^{(1)} E^{(0)}\Vert_{N_{1}(T)}^{\frac{1}{2}}\right)\\
        &\leq C T^{\frac{1}{4}}\Vert E_{0}\Vert_{L^{2}_x}\left(\Vert v_{0}\Vert_{H_x^{-\frac{1}{2}}}+\left( C\Vert E_{0}\Vert_{L_{x}^{2}}\right)^{\frac{1}{2}}\Vert v^{(1)} E^{(0)}\Vert_{N_{1}(T)}^{\frac{1}{2}}\right).
        \end{aligned}
    \end{equation*}
    Hence we can choose $T$ sufficiently small such that 
\begin{equation*}
    \Vert  v^{(1)}E^{(0)}\Vert_{N_{1}(T)}\leq CT^{\frac{1}{4}},
\end{equation*}
where $C$ is a constant depending on $\Vert E_0\Vert_{L_x^2}$ and $\Vert v_0\Vert_{H_x^{-1/2}}$.

Now suppose that \eqref{108}-\eqref{T* W k} are valid for $j=k \geq 0$. By direct computations we have   
\begin{equation}\label{123}
    \begin{aligned}
                    &\partial_t\int_{\mathbb{R}}\frac{|P_{\leq\mu}E^{(k+1)}|^2}{2} dy - \partial_{x_1}\int_{\mathbb{R}}\Im\left(P_{\leq\mu}E^{(k+1)}\overline{\partial_{x_1}P_{\leq\mu}E^{(k+1)}}\right) dy \\
            = &\int_{\mathbb{R}}\Im\left(P_{\leq\mu}\left((\Re v^{(k)})E^{(k)}\right)\overline{P_{\leq\mu}E^{(k+1)}}\right)dy, \\ 
            &\partial_t\int_{\mathbb{R}}\Im\left(P_{\mu, e_1}E^{(k+1)}\overline{\partial_{x_1}P_{\mu, e_1}E^{(k+1)}}\right) dy - \partial_{x_1}\int_{\mathbb{R}}\left(2|\partial_{x_1}P_{\mu, e_1}E^{(k+1)}|^2 -\partial_{x_1}^2\frac{|P_{\mu, e_1}E^{(k+1)}|^2}{2}\right) dy \\
            = &\int_{\mathbb{R}}\Im\left(P_{\mu, e_1}\left((\Re v^{(k)})E^{(k)}\right)\overline{\partial_{x_{1}}P_{\mu, e_1}E^{(k+1)}}\right)dy,
    \end{aligned}
    \end{equation}
    and 
    \begin{equation}\label{61}
        \begin{aligned}
                        &\partial_t\int_{\mathbb{R}}|\Lambda P_{\leq\lambda}v^{(k + 1)}| dy - \partial_{x_1}\int_{\mathbb{R}}\Im(\Lambda P_{\leq\lambda}v^{(k+1)})\partial_{x_1} P_{\leq\lambda}(\Re v^{(k+1)}) dy \\
            = &\int_{\mathbb{R}}P_{\leq \lambda}(\Delta |E^{(k)}|^{2})P_{\leq\lambda}\Im(\Lambda v^{(k+1)})dy, \\ 
            &\partial_t\int_{\mathbb{R}}\Im(\Lambda P_{\lambda, e_1}v^{(k+1)})\partial_{x_1} P_{\lambda, e_1}(\Re v^{(k+1)}) dy - \partial_{x_1}\int_{\mathbb{R}}\left(|\Lambda P_{\lambda, e_1} v^{(k+1)}|^2 - |\nabla_yP_{\lambda, e_1}(\Re v^{(k+1)})|^2\right) dy \\
             = &\int_{\mathbb{R}}P_{\lambda, e_1}\partial_{x_1}(\Re v^{(k+1)})P_{\lambda, e_1}(\Delta |E^{(k)}|^{2})dy.
        \end{aligned}
    \end{equation}
    Again, applying div-curl lemma to \eqref{123} and \eqref{61} yields 
        \begin{equation}\label{125}
        \begin{aligned}
            &\Vert P_{\leq \lambda}v^{(k + 1)}P_{\mu, e_1}E^{(k + 1)}\Vert_{L_t^2L_{x_1}^2L_y^1} \\
            \lesssim &\mu^{-\frac{1}{2}}\left(\Vert P_{\mu, e_1}E_0\Vert_{L_x^2} + \Vert P_{\mu, e_1}((\Re v^{(k)})E^{(k)})P_{\mu, e_1}E^{(k)} \Vert_{L_{t, x}^1}^{\frac{1}{2}}\right)\\
            \cdot &\left(\Vert P_{\leq \lambda}v_0\Vert_{L_x^2} + \lambda^{-1}\Vert P_{\leq \lambda}\Im(\Lambda v^{(k)})P_{\leq \lambda}(\Delta |E^{(k)}|^2)\Vert_{L_{t, x}^1}^{\frac{1}{2}}\right)
        \end{aligned}
        \qquad (1 \ll \mu)
    \end{equation} 
    and 
    \begin{equation}\label{126}
        \begin{aligned}
            &\Vert P_{\lambda, e_1}v^{(k + 1)}P_{\leq \mu}E^{(k + 1)}\Vert_{L_t^2L_{x_1}^2L_y^1} \\
            \lesssim &\lambda^{-\frac{1}{2}}\left(\Vert P_{\leq \mu}E_0\Vert_{L_x^2} + \Vert P_{\leq \mu}((\Re v^{(k)})E^{(k)})P_{\leq \mu}E^{(k)} \Vert_{L_{t, x}^1}^{\frac{1}{2}}\right) \\
            \cdot &\left(\Vert P_{\lambda, e_1}v_0\Vert_{L_x^2} + \lambda^{-1}\Vert P_{\lambda, e_1}(\partial_{x_1}\Re v^{(k)})P_{\lambda, e_1}(\Delta |E^{(k)}|^2)\Vert_{L_{t, x}^1}^{\frac{1}{2}}\right)
        \end{aligned}
        \qquad (\mu \ll \lambda).
    \end{equation}
    Similar to the proof of Proposition \ref{lemma l2}, it follows from \eqref{125}, \eqref{126}, H\"older's inequality and Strichartz estimates that  
    \begin{equation}\label{127}
        \begin{aligned}
            &\Vert v^{(k + 1)}E^{(k + 1)}\Vert_{N_1(T)} \\
            \lesssim &T^{\frac{1}{4}}\left(\Vert E_0\Vert_{L_x^2} + \Vert v^{(k)}E^{(k)}E^{(k + 1)}\Vert_{L_{t, x}^1}^{\frac{1}{2}}\right)\left(\Vert v_0\Vert_{H_x^{-1/2}} + \Vert v^{(k + 1)}E^{(k)}E^{(k)}\Vert_{L_{t, x}^1}^{\frac{1}{2}}\right) \\
            \lesssim &T^{\frac{1}{4}}\left(\Vert E_0\Vert_{L_x^2} + \Vert E^{(k + 1)}\Vert_{S_1(T)}^{\frac{1}{2}}\Vert v^{(k)}E^{(k)}\Vert_{N_1(T)}^{\frac{1}{2}}\right)\left(\Vert v_0\Vert_{H_x^{-1/2}} + \Vert E^{(k)}\Vert_{S_1(T)}^{\frac{1}{2}}\Vert v^{(k + 1)}E^{(k)}\Vert_{N_1(T)}^{\frac{1}{2}}\right) \\
            \lesssim &T^{\frac{1}{4}}\left(\Vert E_0\Vert_{L_x^2} + \left(\Vert E_0\Vert_{L_x^2} + \Vert v^{(k)}E^{(k)}\Vert_{N_1(T)}\right)^{\frac{1}{2}}\Vert v^{(k)}E^{(k)}\Vert_{N_1(T)}^{\frac{1}{2}}\right) \\
            \cdot &\left(\Vert v_0\Vert_{H_x^{-1/2}} + \Vert E^{(k)}\Vert_{S_1(T)}^{\frac{1}{2}}\Vert v^{(k + 1)}E^{(k)}\Vert_{N_1(T)}^{\frac{1}{2}}\right).
        \end{aligned}
    \end{equation}
    Repeating the same procedure above for $v^{(k + 2)}$ and $E^{(k + 1)}$, we also have
    \begin{equation}\label{1281}
        \begin{aligned}
            &\Vert v^{(k + 2)}E^{(k + 1)}\Vert_{N_1(T)}\\
            \lesssim &T^{\frac{1}{4}}\left(\Vert E_0\Vert_{L_x^2} + \Vert v^{(k)}E^{(k)}E^{(k + 1)}\Vert_{L_{t, x}^1}^{\frac{1}{2}}\right)\left(\Vert v_0\Vert_{H_x^{-1/2}} + \Vert v^{(k + 2)}E^{(k + 1)}E^{(k + 1)}\Vert_{L_{t, x}^1}^{\frac{1}{2}}\right) \\
            \lesssim &T^{\frac{1}{4}}\left(\Vert E_0\Vert_{L_x^2} + \Vert E^{(k + 1)}\Vert_{S_1(T)}^{\frac{1}{2}}\Vert v^{(k)}E^{(k)}\Vert_{N_1(T)}^{\frac{1}{2}}\right)\left(\Vert v_0\Vert_{H_x^{-1/2}} + \Vert E^{(k + 1)}\Vert_{S_1(T)}^{\frac{1}{2}}\Vert v^{(k + 2)}E^{(k + 1)}\Vert_{N_1(T)}^{\frac{1}{2}}\right) \\
            \lesssim &T^{\frac{1}{4}}\left(\Vert E_0\Vert_{L_x^2} + \left(\Vert E_0\Vert_{L_x^2} + \Vert v^{(k)}E^{(k)}\Vert_{N_1(T)}\right)^{\frac{1}{2}}\Vert v^{(k)}E^{(k)}\Vert_{N_1(T)}^{\frac{1}{2}}\right) \\
            \cdot &\left(\Vert v_0\Vert_{H_x^{-1/2}} + \left(\Vert E_0\Vert_{L_x^2} + \Vert v^{(k)}E^{(k)}\Vert_{N_1(T)}\right)^{\frac{1}{2}}\Vert v^{(k + 2)}E^{(k + 1)}\Vert_{N_1(T)}^{\frac{1}{2}}\right).
        \end{aligned}
    \end{equation}
    From \eqref{127} and \eqref{1281}, it follows that we can choose $T \ll 1$ such that 
    \begin{equation*}
        \Vert v^{(k + 1)}E^{(k + 1)}\Vert_{N_1(T)}, \Vert v^{(k + 2)}E^{(k + 1)}\Vert_{N_1(T)} \leq CT^{\frac{1}{4}},
    \end{equation*}
    and hence \eqref{T* S k} and \eqref{T* W k} hold for all $k \geq 0$ by induction.
    Moreover, by Strichartz estimate 
    \begin{equation*}
        \Vert E^{(k + 1)}\Vert_{X(T)} \leq C\left(\Vert E_0\Vert_{L_x^2} + \Vert (\Re v^{(k)})E^{(k)}\Vert_{N_1(T)} \right) \leq C(1 + CT^{\frac{1}{4}}).
    \end{equation*}
    Therefore, \eqref{108} holds for all $k \geq 0$ by induction. This completes the proof.
    \end{proof}
\end{proposition}

Lastly, to control $\Vert v^{(k + 1)}\Vert_{S_2(T)}\ (k \geq 1)$, from \eqref{100}, \eqref{101} and \eqref{integral K} we write 
\begin{equation}\label{131A}
    \begin{aligned}
        \Vert v^{(k + 1)}\Vert_{S_2(T)} = \sup_{K \in S_2^*(T), \Vert K\Vert_{S_2^*(T)} = 1}\int Kv^{(k + 1)} dxdt = \sup_{K \in S_2^*(T), \Vert K\Vert_{S_2^*(T)} = 1}\int\Lambda h|E^{(k)}|^2 dxdt,
    \end{aligned}
\end{equation}
where $S_2^*(T) = L_t^1H_x^{1/2}, h\colon [0, T] \times \mathbb{R}^2 \rightarrow \mathbb{C}$ is a solution to \eqref{101}.
Here we follow the same proof procedure when controlling the term $\Vert v^{(1)}\Vert_{S_2(T)}$, that is to say, applying div-curl lemma to the mass and momentum balance laws satisfied by $h$ and $E^{(k - 1)}$ to obtain the corresponding bilinear estimates, and hence we can control the term 
\begin{equation*}
    \int\Lambda h|E^{(k)}|^2 dxdt
\end{equation*}
in a procedure as in the proof of Proposition \ref{lemma l2}. 

Indeed, recall \eqref{105} and \eqref{123}. Note that by Proposition \ref{p5.2}, the terms 
\begin{equation}\label{132}
    \Vert E^{(k + 1)}\Vert_{S_1(T)}, \Vert v^{(k)}E^{(k)}\Vert_{N_1(T)}
\end{equation}
can be controlled by $CT^{\frac{1}{4}}$. It follows that we can still use div-curl lemma to derive the bilinear estimates for the Littlewood-Paley components of $h$ and $E^{(k)}$, and hence we have
\begin{equation}\label{68}
    \int\Lambda h|E^{(k)}|^2 dxdt \leq CT^{\frac{1}{4}}\Vert K\Vert_{S_2^*(T)}\Vert E^{(k)}\Vert_{S_1(T)}\left(\Vert E_0\Vert_{L_x^2} + \Vert v_0\Vert_{H_x^{-\frac{1}{2}}}\right).
\end{equation}

Lastly, from Proposition \ref{p5.2}, \eqref{131A} and \eqref{68}
\begin{equation*}
    \Vert v^{(k)}\Vert_{S_2(T)} \leq C, \qquad \forall k
\end{equation*}
provided that $T$ is sufficiently small.

\subsection{Lipschitz continuity}\label{Section continuity}
It is straightforward to verify that $(E^{(k)}-E^{(k-1)})$ and $(v^{(k)}- v^{(k-1)})$ satisfy
\begin{equation*}
    \begin{cases}
    (\ii\partial_{t}+\Delta)(E^{(k)}-E^{(k-1)})=(\Re v^{(k-1)})E^{(k-1)}-(\Re v^{(k-2)})E^{(k-2)},\\ 
     (\ii\partial_{t}-\Lambda)(v^{(k)}-v^{(k-1)})=\Lambda | E^{(k-1)}|^{2}-\Lambda | E^{(k-2)}|^{2},
    \end{cases}
    \qquad k \geq 2
\end{equation*}
and 
\begin{equation*}
    \begin{cases}
    (\ii\partial_{t}+\Delta)(E^{(1)}-E^{(0)})=(\Re v^{(0)})E^{(0)},\\ 
     (\ii\partial_{t}-\Lambda)(v^{(1)}-v^{(0)})=\Lambda | E^{(0)}|^{2}.
    \end{cases}
\end{equation*}
We define
\begin{equation*}
    R_{k}:=\left\Vert (\Re v^{(k)})E^{(k)}-(\Re v^{(k-1)})E^{(k-1)}\right\Vert_{N_{1}(T)}, \qquad k \geq 1.
\end{equation*}

\begin{proposition}\label{5.2}
    For all $k \geq 3$, 
    \begin{equation}\label{iteration of N}
        R_{k}  \lesssim  T^{\frac{1}{4}}R_{k - 1}+T^{\frac{1}{2}}R_{k-2}.
    \end{equation}
    \begin{proof}
        When $k \geq 3$, by triangle inequality
        \begin{equation}\label{128}
    \begin{aligned}
        &\left\Vert (\Re v^{(k)})E^{(k)}-(\Re v^{(k-1)})E^{(k-1)}\right\Vert_{N_{1}(T)}\\
        \leq &\left\Vert (\Re v^{(k)})(E^{(k)}-E^{(k-1)})\right\Vert_{N_{1}(T)}+\left\Vert (\Re v^{(k)}-\Re v^{(k-1)})E^{(k-1)}\right\Vert_{N_{1}(T)} \\ 
        := &R_{ka} + R_{kb}.
    \end{aligned} 
\end{equation}
    For $R_{ka}$, similar to the proof of Proposition \ref{p5.2}, using the bilinear estimates derived by applying div-curl lemma to the mass and momentum balance laws satisfied by frequency components of $v^{(k)}$ and $(E^{(k)} - E^{(k - 1)})$, and Strichartz estimates, we have 
    \begin{equation}\label{difference E}
\begin{split}
    R_{ka} &\leq \left\Vert  v^{(k)}(E^{(k)}-E^{(k-1)})\right\Vert_{N_{1}(T)}\\
   &\lesssim T^{\frac{1}{4}}  \left\Vert (E^{(k)}-E^{(k-1)})\left((\Re v^{(k-1)})E^{(k-1)}-(\Re v^{(k-2)})E^{(k-2)}\right)\right\Vert_{L_{t,x}^{1}}^{\frac{1}{2}}\\
    &\lesssim T^{\frac{1}{4}}\left\Vert E^{(k)}-E^{(k-1)}\right\Vert_{S_{1}(T)}^{\frac{1}{2}}  \left\Vert (\Re v^{(k-1)})E^{(k-1)}-(\Re v^{(k-2)})E^{(k-2)}\right\Vert_{N_{1}(T)}^{\frac{1}{2}}\\
    &\lesssim T^{\frac{1}{4}}\left\Vert (\Re v^{(k-1)})E^{(k-1)}-(\Re v^{(k-2)})E^{(k-2)}\right\Vert_{N_{1}(T)} \\ 
    &= T^{\frac{1}{4}}R_{k - 1}.
\end{split} 
\end{equation}
Repeating the same procedure above to $(v^{(k)} - v^{(k - 1)})$ and $E^{(k)}$ yields
\begin{equation}\label{130}
\begin{split}
    R_{kb} \lesssim \left\Vert (v^{(k)}- v^{(k-1)})E^{(k-1)}\right\Vert_{N_{1}(T)} \lesssim T^{\frac{1}{4}}\left\Vert(\Re v^{(k)}-\Re v^{(k-1)})(| E^{(k-1)}|^{2}-| E^{(k-2)}|^{2}) \right\Vert_{L_{t,x}^{1}}^{\frac{1}{2}},
\end{split}   
\end{equation}
and it follows from H\"{o}lder's inequality, div-curl lemma and the mass and momentum balance laws of frequency components $(\Re v^{(k)} - \Re v^{(k - 1)})$ and $(E^{(k)} - E^{(k - 1)})$ that 
\begin{equation}\label{131}
    \begin{split}
        &\left\Vert(\Re v^{(k)}-\Re v^{(k-1)})(| E^{(k-1)}|^{2}-| E^{(k-2)}|^{2}) \right\Vert_{L_{t,x}^{1}}\\
        \leq &\left\Vert(\Re v^{(k)}-\Re v^{(k-1)})E^{(k-1)}(  \overline{E^{(k-1)}}- \overline{E^{(k-2)}}) \right\Vert_{L_{t,x}^{1}}\\
        +&\left\Vert(\Re v^{(k)}-\Re v^{(k-1)})\overline{E^{(k-2)}}(  E^{(k-1)}- E^{(k-2)}) \right\Vert_{L_{t,x}^{1}}\\
        \leq &(\Vert E^{(k-1)}\Vert_{S_{1}(T)}+\Vert E^{(k-2)}\Vert_{S_{1}(T)}) \left\Vert(\Re v^{(k)}-\Re v^{(k-1)})(  E^{(k-1)}- E^{(k-2)}) \right\Vert_{N_{1}(T)}\\
        \lesssim &T^{\frac{1}{4}}\left\Vert (\Re v^{(k)}-\Re v^{(k-1)})(| E^{(k-1)}|^{2}-| E^{(k-2)}|^{2}) \right\Vert_{L_{t,x}^{1}}^{\frac{1}{2}}\left\Vert (\Re v^{(k-2)})E^{(k-2)}-(\Re v^{(k-3)})E^{(k-3)}\right\Vert_{N_{1}(T)}.
    \end{split}
\end{equation} 
Combining \eqref{130} with \eqref{131}, we have 
\begin{equation}\label{difference v}
    \begin{split}
       R_{kb}\lesssim T^{\frac{1}{2}} \left\Vert (\Re v^{(k-2)})E^{(k-2)}-(\Re v^{(k-3)})E^{(k-3)}\right\Vert_{N_{1}(T)} = T^{\frac{1}{2}}R_{k - 2}.
    \end{split}
\end{equation}
Now the desired conclusion follows from \eqref{128}, \eqref{difference E} and \eqref{difference v}.
    \end{proof}
\end{proposition}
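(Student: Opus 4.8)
The plan is to carry out, at the level of consecutive differences, the same argument that produces the a priori bounds in Proposition~\ref{p5.2}. Abbreviate $F^{(j)} := (\Re v^{(j)})E^{(j)}$, so that $R_k = \Vert F^{(k)} - F^{(k-1)}\Vert_{N_1(T)}$ and, for $k \ge 2$, the differences solve, with zero initial data,
\begin{equation*}
(\ii\partial_t + \Delta)\big(E^{(k)} - E^{(k-1)}\big) = F^{(k-1)} - F^{(k-2)}, \qquad (\ii\partial_t - \Lambda)\big(v^{(k)} - v^{(k-1)}\big) = \Lambda\big(|E^{(k-1)}|^2 - |E^{(k-2)}|^2\big).
\end{equation*}
First I would split $R_k \le R_{ka} + R_{kb}$ by the triangle inequality, with $R_{ka} := \Vert (\Re v^{(k)})(E^{(k)}-E^{(k-1)})\Vert_{N_1(T)}$ and $R_{kb} := \Vert (\Re v^{(k)}-\Re v^{(k-1)})E^{(k-1)}\Vert_{N_1(T)}$, and bound each term by the bilinear estimate obtained from the div-curl lemma (Lemma~\ref{div-curl}) in the inhomogeneous form used in the proof of Proposition~\ref{p5.2}: applied to a product of two inhomogeneous solutions it produces, besides the terms coming from the linear data, the square roots of the $L_{t,x}^1$ norms of the products of each forcing with the corresponding solution. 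Throughout, the norms $\Vert E^{(j)}\Vert_{X(T)}$ and $\Vert F^{(j)}\Vert_{N_1(T)}$ are already controlled by the uniform constant $C$ of Proposition~\ref{p5.2}, so they enter only as harmless bounded factors.

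For $R_{ka}$ I would apply the bilinear estimate to the pair $(v^{(k)}, E^{(k)}-E^{(k-1)})$, after dyadic and directional decomposition and with the high-high interaction handled by Strichartz. Up to a uniformly bounded factor coming from the data and nonlinearity of $v^{(k)}$, this gives $R_{ka} \lesssim T^{1/4}\Vert (E^{(k)}-E^{(k-1)})(F^{(k-1)}-F^{(k-2)})\Vert_{L_{t,x}^1}^{1/2}$, where the gain $T^{1/4}$ comes from H\"older in time in passing from the $L_t^{4/3}$ norm of $N_1(T)$ to the $L_t^2$ norm in the bilinear estimate. H\"older in the transverse variable ($L_{\omega_{i\bot}}^\infty \times L_{\omega_{i\bot}}^1$) then bounds the $L_{t,x}^1$ norm by $\Vert E^{(k)}-E^{(k-1)}\Vert_{S_1(T)}\,R_{k-1}$, and the Strichartz estimate \eqref{stricahrtz d=2}--\eqref{AB} applied to the zero-data equation for $E^{(k)}-E^{(k-1)}$ gives $\Vert E^{(k)}-E^{(k-1)}\Vert_{S_1(T)} \lesssim R_{k-1}$. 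Altogether $R_{ka} \lesssim T^{1/4}R_{k-1}$.

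For $R_{kb}$ the same scheme, applied now to $(v^{(k)}-v^{(k-1)}, E^{(k-1)})$, yields $R_{kb} \lesssim T^{1/4}X^{1/2}$, modulo the bounded factor from the data and already-controlled forcing of $E^{(k-1)}$, where
\begin{equation*}
X := \big\Vert (\Re v^{(k)}-\Re v^{(k-1)})\big(|E^{(k-1)}|^2-|E^{(k-2)}|^2\big)\big\Vert_{L_{t,x}^1}
\end{equation*}
is the contribution of the forcing in the $v$-difference equation. To close, I would expand $|E^{(k-1)}|^2-|E^{(k-2)}|^2 = E^{(k-1)}(\overline{E^{(k-1)}}-\overline{E^{(k-2)}}) + \overline{E^{(k-2)}}(E^{(k-1)}-E^{(k-2)})$, use H\"older in the transverse variable to get $X \lesssim (\Vert E^{(k-1)}\Vert_{S_1(T)}+\Vert E^{(k-2)}\Vert_{S_1(T)})\Vert (\Re v^{(k)}-\Re v^{(k-1)})(E^{(k-1)}-E^{(k-2)})\Vert_{N_1(T)}$, and apply the bilinear estimate once more, to $(v^{(k)}-v^{(k-1)}, E^{(k-1)}-E^{(k-2)})$: the $E$-difference equation (forced by $F^{(k-2)}-F^{(k-3)}$, of $N_1(T)$ norm $R_{k-2}$, and with $\Vert E^{(k-1)}-E^{(k-2)}\Vert_{S_1(T)} \lesssim R_{k-2}$ by Strichartz) contributes a factor $R_{k-2}$, while the $v$-difference equation reproduces $X^{1/2}$. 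This produces the self-improving inequality $X \lesssim T^{1/4}X^{1/2}R_{k-2}$, whence $X \lesssim T^{1/2}R_{k-2}^2$ and $R_{kb} \lesssim T^{1/4}X^{1/2} \lesssim T^{1/2}R_{k-2}$. Combining the two bounds gives \eqref{iteration of N}.

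The step I expect to be the main obstacle is this treatment of $R_{kb}$: the div-curl estimate applied to a $v$-difference feeds back the quantity $X$ one is trying to bound, so it must be closed as a self-improving inequality with $X^{1/2}$ absorbed on the left. This absorption is legitimate only because $X < \infty$ a priori and $T$ is small, both resting on the uniform bounds of Proposition~\ref{p5.2}; a secondary difficulty is the bookkeeping of which pair of inhomogeneous Schr\"odinger/wave equations the div-curl lemma is applied to at each stage, and the matching of each Duhamel forcing term to the correct $R_{k-j}$ together with the frequency weights that balance the derivatives.
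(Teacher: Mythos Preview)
Your proposal is correct and follows essentially the same approach as the paper: the same triangle-inequality split into $R_{ka}$ and $R_{kb}$, the same bilinear/div-curl treatment of each piece, and in particular the same self-improving inequality $X \lesssim T^{1/4}X^{1/2}R_{k-2}$ for the $R_{kb}$ term. Your identification of the absorption of $X^{1/2}$ as the key step matches exactly what the paper does in \eqref{130}--\eqref{difference v}.
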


From Proposition \ref{p5.2}-\ref{5.2}, we can choose $T\ll 1$ sufficiently small such that
\begin{equation*}
    R_{k}\leq \frac{1}{2^{k}},\qquad \forall k\geq 3.
\end{equation*}
This proves the contraction of the sequence $\{\Re v^{(k)} - \Re v^{(k - 1)}\}$.
Moreover, from Strichartz estimate \eqref{stricahrtz d=2} we have 
\begin{equation*}
    \left\Vert E^{(k+1)}-E^{(k)} \right\Vert_{X(T)}\lesssim R_{k}.
\end{equation*}
which completes the proof of \eqref{E_Hs}. The proof for \eqref{v_S2} follows similarly.

\section{Proof of Proposition \ref{prop} when \texorpdfstring{$d = 3$}{d=3} }\label{d=3}

The difference between the proofs of Proposition \ref{prop} when $d=3$ and $d=2$ originates from the Strichartz estimate \eqref{stricahrtz d=3}.
Here we only present the proof for controlling the term $\Vert E^{(1)}\Vert_{S_1(T)}$, as the remaining arguments parallel the case $d = 2$.

By Strichartz estimate \eqref{stricahrtz d=3},
\begin{equation*}
    \Vert E^{(1)}\Vert_{S_{1}(T)}\lesssim T^{\frac{s}{2}}\Vert E_{0}\Vert_{H^{s}_{x}}+T^{s}\Vert (\Re v^{(0)})E^{(0)}\Vert_{N_{1}(T)}\leq  T^{\frac{s}{2}}\Vert E_{0}\Vert_{H^{s}_{x}}+T^{s}\Vert  v^{(0)}E^{(0)}\Vert_{N_{1}(T)},
\end{equation*}
where $s > 0$. By duality,
\begin{equation*}
    \begin{aligned}
        \Vert  v^{(0)}E^{(0)}\Vert_{N_1(T)} = \sup_{\Vert G\Vert_{N^{*}_1(T)} = 1}\int \overline{G} v^{(0)}E^{(0)} dxdt.
    \end{aligned}
\end{equation*}
where 
\begin{equation*}
    \Vert u\Vert_{N^{*}_1(T)} := \sum_{i = 1}^N\left(\sum_{\lambda\geq 1}\lambda^{-4s}\Vert P_{\lambda}u\Vert_{L_{t}^{2}L_{\omega_i}^{2}L_{\omega_{i\bot}}^{\infty}}^{2}\right)^{\frac{1}{2}}.
\end{equation*}
For such $G$, we have 
    \begin{equation*}
            \begin{aligned}
                \int \overline{G} v^{(0)}E^{(0)} dxdt = \sum_{\sigma \geq 1}\int\overline{G}P_{\sigma}\left( v^{(0)}E^{(0)}\right) dxdt.
    \end{aligned}
\end{equation*}
It follows that
        \begin{equation*}
            \begin{aligned}
                \int \overline{G} v^{(0)}E^{(0)} dxdt &= \sum_{\sigma \geq 1}\int\overline{G}P_{\sigma}\left( v^{(0)}E^{(0)}\right) dxdt\\ &= \left(\sum_{1\leq\lambda\ll \mu\sim\sigma}+\sum_{1\leq\mu\ll \lambda\sim\sigma}+\sum_{1\leq\sigma\lesssim\lambda\sim \mu}\right)\int\overline{P_{\sigma}G}P_{\lambda} v^{(0)}P_{\mu}E^{(0)} dxdt \\ 
                &=: I_1 + I_2 + I_3. 
            \end{aligned}
\end{equation*} 

\begin{proposition}
    There holds 
    \begin{equation*}
        I_1, I_2, I_3 \lesssim \Vert E_0\Vert_{H_x^s}\Vert v_0\Vert_{H_x^{s-\frac{1}{2}}}.
    \end{equation*}
    \begin{proof}
        For simplicity we write $E :=E^{(0)}$ and $v :=v^{(0)}$.
        As we have shown in the proof of Proposition \ref{lemma l2}, for $I_1$ and $I_3$, by H\"{o}lder's inequality and bilinear estimate \eqref{High E} we have
\begin{align*}
\begin{split}
    I_{1} 
    &\leq \sum_{1\ll\mu\sim\sigma}\sum_{i = 1}^N\int \overline{  P_{\sigma}G}P_{\ll \mu} vP_{\mu, \omega_{i}}Edxdt\\
    &\leq \sum_{1\ll\mu\sim\sigma}\sum_{i = 1}^N\Vert  P_{\sigma}G\Vert_{L_{t}^{2}L_{\omega_{i}}^{2}L_{\omega_{i\bot}}^{\infty}}\Vert P_{\ll \mu} v P_{\mu, \omega_{i}}E\Vert_{L_{t}^{2}L_{\omega_{i}}^{2}L_{\omega_{i\bot}}^{1}} \\
    &\lesssim \sum_{1\ll\mu\sim\sigma}\sum_{i = 1}^N\mu^{-\frac{1}{2}}\Vert  P_{\sigma}G\Vert_{L_{t}^{2}L_{\omega_{i}}^{2}L_{\omega_{i\bot}}^{\infty}}\Vert P_{\ll \mu}v_0\Vert_{L_x^2} \Vert P_{\mu, \omega_{i}}E_0\Vert_{L_x^2} \\
    &\lesssim \Vert v_0\Vert_{H_x^{s-\frac{1}{2}}}\sum_{1\ll\mu\sim\sigma}\sum_{i = 1}^N\sigma^{-s}\Vert  P_{\sigma}G\Vert_{L_{t}^{2}L_{\omega_{i}}^{2}L_{\omega_{i\bot}}^{\infty}}\Vert P_{\mu, \omega_{i}}E_0\Vert_{L_x^2} \\
    &\leq \Vert v_0\Vert_{H_x^{s-\frac{1}{2}}}\left(\sum_{\sigma \geq 1}\sum_{i = 1}^N\sigma^{-4s}\Vert  P_{\sigma}G\Vert^2_{L_{t}^{2}L_{\omega_{i}}^{2}L_{\omega_{i\bot}}^{\infty}}\right)^{\frac{1}{2}}\left(\sum_{\sigma \geq 1}\sum_{i = 1}^N\sigma^{2s}\Vert P_{\sigma, \omega_{i}}E_0\Vert_{L_x^2}^2\right)^{\frac{1}{2}} \\
    &\lesssim \Vert v_0\Vert_{H_x^{s-\frac{1}{2}}}\Vert G\Vert_{N^{*}_1(T)}\Vert E_0\Vert_{H_x^s} \\
    &=\Vert v_0\Vert_{H_x^{s-\frac{1}{2}}}\Vert E_0\Vert_{H_x^s}
\end{split}  
\end{align*}
and 
\begin{equation*}
    \begin{aligned}
            I_3 &\lesssim \sum_{1 \lesssim \lambda \sim \mu}\sum_{i = 1}^N\int\overline{P_{\lesssim \mu}G}P_{\lambda} vP_{\mu, \omega_i}E dxdt \\ 
        &\lesssim \sum_{1 \lesssim \lambda \sim \mu}\sum_{i = 1}^N\Vert P_{\lesssim \mu}G\Vert_{L_t^2L_{\omega_i}^2L_{\omega_{i\bot}}^{\infty}} \Vert P_{\lambda} vP_{\mu, \omega_j}E\Vert_{L_t^{2}L_{\omega_i}^2L_{\omega_{i\bot}}^1} \\ 
        &\lesssim \Vert G\Vert_{N^{*}_1(T)}\sum_{1 \lesssim \lambda \sim \mu}\sum_{i = 1}^N\mu^{2s}\Vert P_{\lambda} vP_{\mu, \omega_i}E\Vert_{L_t^2L_{\omega_i}^2L_{\omega_{i\bot}}^1} \\ 
        &\lesssim \sum_{1 \lesssim \lambda \sim \mu}\sum_{i = 1}^N\mu^{2s-\frac{1}{2}}\Vert P_{\lambda}v_0\Vert_{L_x^2}\Vert P_{\mu, \omega_i}E_0\Vert_{L_x^2} \\ 
        &\lesssim \left(\sum_{\mu \geq 1}\mu^{2s-1}\Vert P_{\mu}v_0\Vert_{L_x^2}^2\right)^{\frac{1}{2}}\left(\sum_{\mu \geq 1}\sum_{i = 1}^N\mu^{2s}\Vert P_{\mu, \omega_i}E_0\Vert_{L_x^2}^2\right)^{\frac{1}{2}} \\ 
        &\lesssim \Vert v_0\Vert_{H_x^{s-\frac{1}{2}}}\Vert E_0\Vert_{H_x^s}.
    \end{aligned}
\end{equation*}
For $I_2$,  
\begin{equation*}
    \begin{split}
        I_{2} &\lesssim \sum_{1\ll \lambda\sim\sigma }\sum_{i = 1}^{N}\int \overline{ P_{\sigma}G}P_{\lambda,\omega_{i}} vP_{\ll\lambda}E dxdt\\
        &\leq \sum_{1\ll \lambda\sim\sigma}\sum_{i = 1}^N\Vert  P_{\sigma}G\Vert_{L_{t}^{2}L_{\omega_{i}}^{2}L_{\omega_{i\bot}}^{\infty}}\Vert  P_{\lambda,\omega_{i}} v P_{\ll \lambda}E\Vert_{L_{t}^{2}L_{\omega_{i}}^{2}L_{\omega_{i\bot}}^{1}}\\
        &\lesssim \sum_{\substack{1\ll \lambda\sim\sigma}}\sum_{i = 1}^N\lambda^{-\frac{1}{2}}\Vert  P_{\sigma}G\Vert_{L_{t}^{2}L_{\omega_{i}}^{2}L_{\omega_{i\bot}}^{\infty}}\Vert P_{\lambda,\omega_{i}} v_{0}\Vert_{L_{x}^{2}}\Vert P_{\ll\lambda}E_{0}\Vert_{L_{x}^{2}}\\
        &\lesssim \Vert E_{0}\Vert_{H_x^s}\left( \sum_{\substack{\lambda \geq 1}}\sum_{i = 1}^N\lambda^{-4s}\Vert  P_{\lambda}G\Vert_{L_{t}^{2}L_{\omega_{i}}^{2}L_{\omega_{i\bot}}^{\infty}}^{2}\right)^{\frac{1}{2}}\left( \sum_{\substack{\lambda\geq 1}}\sum_{i=1}^{N}\lambda^{2s -1}\Vert P_{\lambda,\omega_{i}} v_{0}\Vert_{L_{x}^{2}}^{2}\right)^{\frac{1}{2}}\\
        &\lesssim \Vert v_{0}\Vert_{H_x^{s - \frac{1}{2}}}\Vert E_{0}\Vert_{H_{x}^{s}}\Vert G\Vert_{S_{1}(T)}\\
        &=\Vert v_{0}\Vert_{H_x^{s - \frac{1}{2}}}\Vert E_{0}\Vert_{H_{x}^{s}}.
    \end{split}
\end{equation*}
    \end{proof}
\end{proposition}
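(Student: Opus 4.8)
The plan is to treat this proposition as the three-dimensional counterpart of Proposition~\ref{lemma l2}, running on each of the three Bony pieces $I_1$ (low-high), $I_2$ (high-low) and $I_3$ (high-high) the same scheme: localize the high-frequency factor in direction, pair the two factors by H\"older in a mixed norm, gain a negative power of the high frequency from the bilinear estimates \eqref{High E}-\eqref{High W} of Corollary~\ref{coro}, and sum the resulting dyadic series by Cauchy-Schwarz. The one structural difference from the two-dimensional case is that here $N_1(T)$ and the space $N_1^*(T)$ appearing in its duality formula both carry the time exponent $2$, so the bilinear estimates already output an $L_t^2$ norm and there is no H\"older-in-time step left to produce a positive power of $T$; this is why the stated bound is $T$-free.

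Concretely, for $I_1$ I would first write $P_\lambda v^{(0)}P_\mu E^{(0)} = P_{\ll\mu}v^{(0)}\sum_i P_{\mu,\omega_i}E^{(0)}$ with $\sigma\sim\mu$, pair $P_\sigma G$ against this product by H\"older in $L_t^2 L_{\omega_i}^2 L_{\omega_{i\bot}}^\infty \times L_t^2 L_{\omega_i}^2 L_{\omega_{i\bot}}^1$, and invoke \eqref{High E} to extract the factor $\mu^{-1/2}\Vert P_{\mu,\omega_i}E_0\Vert_{L_x^2}\Vert P_{\ll\mu}v_0\Vert_{L_x^2}$; after collapsing $P_{\ll\mu}v_0$ by almost-orthogonality and a geometric sum in the low frequencies, a single Cauchy-Schwarz over $\sigma\sim\mu$ and the direction index closes the estimate, the accumulated power of $\sigma$ being split so that the $\sigma^{-2s}$ part rebuilds $\Vert G\Vert_{N_1^*(T)}$ and the remainder, together with the bilinear gain, rebuilds $\Vert E_0\Vert_{H_x^s}\Vert v_0\Vert_{H_x^{s-1/2}}$. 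The piece $I_2$ is the mirror image, with $v^{(0)}$ now the directionally localized high-frequency factor, \eqref{High W} supplying the gain $\lambda^{-1/2}$, and $P_{\lesssim\lambda}E^{(0)}$ playing the role of the low factor. For $I_3$ one has $\sigma\lesssim\lambda\sim\mu$, so the test function may be taken as $P_{\ll\mu}G$ and \eqref{High E} applied to $P_\lambda v^{(0)}P_{\mu,\omega_j}E^{(0)}$ (legitimate since $P_\lambda v^{(0)} = P_{\leq\lambda}(P_\lambda v^{(0)})$), again giving $\mu^{-1/2}$, after which the single dyadic sum over $\lambda\sim\mu$ closes by Cauchy-Schwarz.

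The H\"older and duality bookkeeping of these steps is routine; the delicate point, and the one on which the whole argument rests, is the final dyadic summation, namely verifying that in each of $I_1$, $I_2$, $I_3$ the bilinear gain $\mu^{-1/2}$ or $\lambda^{-1/2}$ is exactly strong enough to absorb the growth of the Sobolev weights once the available regularity has been allocated among $G$, $E_0$ and $v_0$. It is precisely this balance that selects the threshold $l = s - \tfrac12$ and the exponents defining $N_1(T)$, $N_1^*(T)$ and $S_1(T)$. One additionally has to keep the $(T\lambda^2)^\varepsilon$ loss of the refined Strichartz estimate \eqref{stricahrtz d=3} from spoiling the absolute convergence of these sums when the bounds are inserted back into the iteration, which is arranged by choosing $\varepsilon$ small relative to $s$ so that it is swallowed by the room left in the geometric series.
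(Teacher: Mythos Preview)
Your proposal is correct and follows essentially the same route as the paper: for each of $I_1$, $I_2$, $I_3$ you perform the directional decomposition on the high-frequency factor, pair by H\"older in $L_t^2L_{\omega_i}^2L_{\omega_{i\bot}}^\infty \times L_t^2L_{\omega_i}^2L_{\omega_{i\bot}}^1$, invoke the corresponding bilinear estimate \eqref{High E} or \eqref{High W} of Corollary~\ref{coro} to extract the $\mu^{-1/2}$ or $\lambda^{-1/2}$ gain, and close by Cauchy--Schwarz over the dyadic and directional indices. Your remarks that the $L_t^2$ duality removes the $T^{1/4}$ factor present in the $d=2$ case, and that the $(T\lambda^2)^\varepsilon$ Strichartz loss is absorbed later in the iteration by taking $\varepsilon$ small relative to $s$, are also in line with the paper's treatment.
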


In conclusion, we have
\begin{equation*}
    \Vert E^{(1)}\Vert_{S_{1}(T)}\lesssim T^{\frac{s}{2}}\Vert E_{0}\Vert_{H^{s}_{x}}+T^{s}\Vert  v^{(0)}E^{(0)}\Vert_{N_{1}(T)}\lesssim T^{\frac{s}{2}}\Vert E_{0}\Vert_{H^{s}_{x}}+T^{s}\Vert v_{0}\Vert_{H^{s-\frac{1}{2}}}\Vert E_{0}\Vert_{H_{x}^{s}}.
\end{equation*}
Note that the Strichartz estimates for the case $d = 3$ only introduces additional factors $T^{s/2}$ and $T^s$ compared to the case $d = 2$, and the impact of this difference is negligible when following the same proof procedure in Section \ref{d=2}. 
Therefore, we omit the rest of the proof.

\vspace{3mm}

\noindent\textbf{Acknowledgements.} This work was supported by the National Natural Science
Foundation of China [No. 12171097], the Key Laboratory of Mathematics for Nonlinear
Sciences (Fudan University), the Ministry of Education of China, Shanghai Key
Laboratory for Contemporary Applied Mathematics.

\vspace{3mm}

\noindent\textbf{Data availability.} The manuscript has no associated data.

\vspace{6mm}

\noindent\textbf{\Large Declaration}

\vspace{3mm}

\noindent\textbf{Conflict of interest.} The authors state that there is no conflict of interest.

\bibliographystyle{abbrv}
\small\bibliography{reference}
\end{document}